\documentclass[final,review,onefignum,onetabnum]{siamart190516}


\usepackage{lipsum,version}
\usepackage{amsfonts}
\usepackage{graphicx}
\usepackage{subfigure}
\usepackage{epstopdf}
\usepackage{algorithmic}

\ifpdf
  \DeclareGraphicsExtensions{.eps,.pdf,.png,.jpg}
\else
  \DeclareGraphicsExtensions{.eps}
\fi

\textheight=21.5cm
\textwidth=15cm
\setlength{\oddsidemargin}{0.7cm}
\setlength{\evensidemargin}{0.7cm}

\theoremstyle{plain}
\newtheorem{thm}{Theorem}[section]
\newtheorem{lem}{Lemma}[section]

\newtheorem{remark}{\textbf{Remark}}[section]

\usepackage{subfigure}
\usepackage{threeparttable}

\usepackage{amssymb}
\usepackage{lineno} 

    \title{A generalized SAV approach with relaxation for dissipative systems\thanks{This work is supported in part by NSFC 11971407,  NSF DMS-2012585 and AFOSR FA9550-20-1-0309.}}

   \author{Yanrong Zhang\thanks{School of Mathematical Sciences and Fujian Provincial Key Laboratory on Mathematical Modeling and High Performance Scientific Computing, Xiamen University, Xiamen, Fujian, 361005, China. Email: yanrongzhang@stu.xmu.edu.cn}.
       \and Jie Shen\thanks{Corresponding Author. Department of Mathematics, Purdue University, West Lafayette, IN 47907, USA.  Email: shen7@purdue.edu.}}

\begin{document}

\maketitle

\begin{abstract}
The scalar auxiliary variable (SAV) approach \cite{shen2018scalar} and its generalized version GSAV proposed in  \cite{huang2020highly} are very popular methods to construct efficient and accurate energy stable schemes for  nonlinear dissipative systems. However, the discrete value of the SAV is not directly linked to the free energy of the dissipative system, and may lead to inaccurate solutions if the time step is not sufficiently small.
Inspired by the relaxed SAV method proposed in  \cite{jiang2022improving} for gradient flows,  we propose in this paper a generalized SAV approach with relaxation (R-GSAV) for general dissipative systems. The R-GSAV approach preserves all the advantages of the GSAV appraoch, in addition, it dissipates a modified energy that is directly linked to the original free energy. 
We prove that the $k$-th order implicit-explicit (IMEX) schemes based on R-GSAV are unconditionally energy stable, and we carry out  a rigorous error analysis for $k=1,2,3,4,5$. We present ample numerical results to  demonstrate the improved accuracy and effectiveness of the R-GSAV approach.
\end{abstract}

\begin{keywords}
  dissipative system; energy stability; error estimate; scalar auxiliary variable (SAV);  gradient flows
\end{keywords}

\begin{AMS} 35Q40;  65M12; 35Q55; 65M70  \end{AMS}

\pagestyle{myheadings}
\thispagestyle{plain}

 \section{Introduction} \label{sec:Intro}
\setcounter{equation}{0}

When designing  numerical schemes for nonlinear dissipative systems, it is crucial to preserve the  energy dissipation law  at the discrete level in order to eliminate non-physics numerical solutions. How to design efficient and accurate energy stable schemes for nonlinear dissipative systems has been a subject of extensive research in the 
past few  decades. 
Existing  popular approaches include, but not limited to:
(i) Convex splitting approach \cite{elliott1993global, eyre1998unconditionally, shen2012second,baskaran2013convergence}: it leads to unconditionally energy  stable for a large class of gredient flows but it requires solving a nonlinear system at each time step; 
 (ii) Stabilized linearly implicit approach \cite{zhu1999coarsening,shen2010numerical}: 
	It leads to  unconditionally energy stable schemes for gredient flows with global Lipschitz conditions and only  requires solving linear systems with constant coefficients at each time step; 
(iii) Exponential time differencing (ETD) approach \cite{du2019maximum, du2021maximum, wang2016efficient}: 
	It can lead to unconditionally energy stable schemes for certain mildly nonlinear systems and requires the diagonalization of the discrete Laplace operator;
(iv) Invariant energy quadratization (IEQ) approach \cite{yang2016linear, yang2017efficient}: it leads to unconditionally energy stable schemes for a large class of gradient flows, but requires solving a coupled linear system with variable coefficients at each time step;
(v) Scalar auxiliary variable (SAV) approach \cite{shen2018scalar, shen2019new}:  it leads to unconditionally energy stable schemes for a large class of gradient flows, but only requires solving two decoupled linear systems with constant coefficients at each time step. We refer to  \cite{shen2019new,DuF20,SheY20,du2021maximum} and references therein for a more complete literature on this subject.
Note that unconditional energy stable schemes constructed from  the above approaches are mostly limited to first- or second-order (see, however, higher-order versions based on Runge-Kutta \cite{MR4033693} or Gaussian collocation \cite{MR4053864} which require solving coupled linear or nonlinear systems). In \cite{huang2020highly} (see also \cite{huang2021implicit}), a generalized SAV approach is proposed for general dissipative systems. Its higher-order versions are also unconditionally energy stable and only requires solving one decoupled linear system with constant coefficients at each time step.

Thanks to their simplicity, efficiency, accuracy and generality, the SAV and GSAV approaches have received much attention recently, they, along with their  various variations/extensions, have been used to construct unconditionally energy stable schemes for a large class of nonlinear systems, including   various gradient flows (see, for instance, \cite{MR4147832, cheng2020new,  MR4158702, li2020stability,SheY20,MR4064802,MR4131868}),  gradient flows with other global constraints  (see, for instance, \cite{cheng2020global}),  Navier-Stokes equations and related systems (see, for instance, \cite{lin2019numerical,li2020sav,MR4244094,MR4350535}), time fractional PDEs \cite{MR4344588}, conservative or Hamiltonian  systems (see, for instance, \cite{antoine2021scalar, MR3979103, MR4239814,feng2021high}), and many more.  However, in the original SAV approach and its various variants, the discrete value of the SAV is not directly linked to its definition at the continuous level, and this may lead  to a loss of accuracy when the time step is not sufficiently small. In order to see this more clearly, we briefly describe the original SAV approach below.

 To fix the idea, we consider a free energy in the form
\begin{equation}
	E_{tot}(\phi)=\frac{1}{2}(\mathcal{L} \phi, \phi) + \int_{\Omega} F(\phi) \mathrm{d} \mathbf{x},
\end{equation}
where $\mathcal{L}$  is a linear self-adjoint elliptic operator, $F(\phi)$ is a nonlinear potential function.
Then, the gradient flow associated with the above free energy can be written as
\begin{equation} \label{eq:model-problem}
	\left\{\begin{aligned}
		& \frac{\partial \phi}{\partial t}=-\mathcal{G} \mu, \\
		& \mu=\frac{\delta E_{tot}(\phi)}{\delta \phi}=\mathcal{L} \phi+F^{\prime}(\phi),
	\end{aligned}\right.
\end{equation}
with periodic or homogeneous  Neumann boundary condition, and  $\mathcal{G}$ is a positive operator. 
Let  $E_{1}(\phi) = \int_{\Omega} F(\phi) \mathrm{d} \mathbf{x}$ and assume 
$E_1(\phi) + C_0 > 0$, where $C_0>0$ is a constant. 
The key idea of the original SAV approach \cite{shen2018scalar,shen2019new}   is to introduce a SAV
$
	r(t)=\sqrt{E_{1}(\phi)+C_{0}}>0,
$
and expand  \eqref{eq:model-problem} into the following equivalent system
\begin{equation}\label{eq:model-problem-SAV}
	\left\{\begin{aligned} 
		& \frac{\partial \phi}{\partial t} =-\mathcal{G} \mu, \\ 
		& \mu =\mathcal{L} \phi+ \frac{r(t)}{\sqrt{E_{1}(\phi)+C_{0}}} F^{\prime}(\phi), \\ 
		& r_{t} =\frac{1}{2 \sqrt{E_{1}(\phi)+C_{0}}} \int_{\Omega} F^{\prime}(\phi) \phi_{t} \mathrm{d} \mathbf{x}. 
	\end{aligned}\right.
\end{equation}
Then, instead of descretizing  \eqref{eq:model-problem}, we can descretize the expanded system \eqref{eq:model-problem-SAV} which, with the additional SAV $r(t)$, allows us to construct efficient and  unconditional energy  stable schemes. For example,  a first-order semi-discrete scheme for  \eqref{eq:model-problem-SAV} can be constructed as follows 
\begin{eqnarray}
	\label{eq:SAV-1}
	& & \frac{\phi^{n+1}-\phi^{n}}{\delta t} =-\mathcal{G} \mu^{n+1}, \\
	\label{eq:SAV-2}
	& & \mu^{n+1} =\mathcal{L} \phi^{n+1}+\frac{r^{n+1}}{\sqrt{E_{1}(\phi^{n})+C_{0}}} F^{\prime}(\phi^{n}), \\ 
	\label{eq:SAV-3}
	& & \frac{ r^{n+1}-r^{n}}{\delta t} =\frac{1}{2 \sqrt{E_{1}(\phi^{n})+C_{0}}} \int_{\Omega} F^{\prime}(\phi^{n}) \frac{ \phi^{n+1}-\phi^{n}}{\delta t} \mathrm{d} \mathbf{x}. 
\end{eqnarray}
It can be easily shown that the above scheme is unconditionally energy stable with a modified energy $\tilde E(\phi^n)=\frac{1}{2}(\mathcal{L} \phi^n, \phi^n)+r^n$, and only requires solving two linear systems with constant coefficients. 

 While the expanded system and the original system are equivalent at the continuous level,   $r^{n+1}$ is not directly linked to   
$\sqrt{\int_\Omega F(\phi^{n+1}) dx +C_0}$ and may take very different values, if the time step is not sufficiently small, and lead to inaccurate solutions. In particular, for fixed $\delta t$, the ratio $\frac{ r^{n+1}}{\sqrt{\int_\Omega F(\phi^{n+1}) dx +C_0}}$ may converge to a value different from 1, leading to a wrong steady state solution (see Table 1 in \cite{MR3979103}).
One possible remedy for this is to monitor the ratio $\frac{ r^{n+1}}{\sqrt{\int_\Omega F(\phi^{n+1}) dx +C_0}}$ adaptively to ensure that it is always close to 1 at every time step. Another remedy is to use a Lagrange multiplier  SAV appraoch \cite{cheng2020new} such that it dissipates the original energy but it involves solving a nonlinear algebraic equation  at each time step which may not admit a suitable solution when the time step is not sufficiently small.

Recently, an interesting relaxed SAV (R-SAV) approach was introduced in \cite{jiang2022improving}. The idea is to add a relaxation step to the original SAV approach to link $r^{n+1}$ with $\sqrt{\int_\Omega F(\phi^{n+1}) dx +C_0}$ so that the updated sequence  $r^{n+1}$ is directly linked to $\sqrt{\int_\Omega F(\phi^{n+1}) dx +C_0}$ in some way and is still dissipative. The cost of this relaxation step is negligible while numerical results in \cite{jiang2022improving} show that the R-SAV approach can significantly improve the accuracy of the original SAV approach. However, this R-SAV approach is based on the original SAV approach which has two limitations/shortcomings: (i) it only applies to gradient flows; and (ii) it requires solving two linear systems at each time step. The generalized SAV (GSAV) approach proposed in \cite{huang2020highly,huang2021implicit} overcomes the above limitations/shortcomings while keeping the essential advantages of the original SAV approach, but it also suffers from the same problem that the computed SAV  is not directly linked to the free energy and may lead to non accurate solutions as in the original SAV approach.

The main purpose of this paper is to construct a relaxed GSAV  (R-GSAV) approach which links the SAV directly to the free energy, and  enjoys the following advantages: 
\begin{itemize}
\item  It is unconditionally energy stable with respect to a modified energy that is closer to the original free energy, and provides a much improved accuracy when compared with the GSAV approach;
\item  it only requires solving one linear system with constant coefficients as opposed to the two linear systems by the R-SAV approach, so its computational cost is essentially half of the R-SAV approach; 
\item  it can be applied to general dissipative systems, and  its higher-order versions are shown to be  unconditionally energy stable with optimal error estimates. 
\end{itemize}
Moreover, our numerical results indicate that, for the ample numerical experiments that we tested,  the modified energy of our R-GSAV schemes equals to the original free energy at almost all times.

The rest of this paper is organized as follows. 
In section \ref{sec:Brief-review}, we provide a brief review of the original SAV approach and R-SAV appraoch for gradient flows, and the GSAV approach for general dissipative systems. 
In Section \ref{sec:R-IMEX-SAV}, we present  the R-GSAV approach for general dissipative systems, and prove that the $k$th-order implicit-explicit (IMEX) schemes based on the  R-GSAV approach is unconditionally stable. 
In Section \ref{sec:Error-estimate}, we carry out an error analysis for the $k$th order  ($1\le k\le 5$) IMEX  schemes for Allen-Cahn type  and Cahn-Hilliard type equations. In Section \ref{sec:msav}, we extend the R-GSAV approach to cases where multiple SAVs are used. We present in
 Section \ref{sec:Numerical-examples} comparisons of R-GSAV  approach with original SAV, R-SAV and GSAV approaches, and provide ample numerical examples to validate its effectiveness. 

 \section{A brief review of the original SAV, relaxed SAV and generalized SAV approaches}\label{sec:Brief-review}
 In order to motivate our new schemes, we briefly review below the  original SAV, relaxed SAV and generalized SAV approached.
\subsection{The original SAV  approach} 
A brief description of the original SAV approach  for 
 gradient flows  is already provided in the introduction where a first-order scheme is introduced. 
Similarly, we can construct $k$-th order IMEX schemes for the  expanded system \eqref{eq:model-problem-SAV} as follows:
\begin{eqnarray}
\label{eq:SAV-BDFk-1}
& & \frac{\alpha_{k} \phi^{n+1}-A_{k}\left(\phi^{n}\right)}{\delta t} =-\mathcal{G} \mu^{n+1}, \\
\label{eq:SAV-BDFk-2}
& & \mu^{n+1} =\mathcal{L} \phi^{n+1}+\frac{r^{n+1}}{\sqrt{E_{1}(B_{k}(\phi^{n}))+C_{0}}} F^{\prime}(B_{k}(\phi^{n})), \\ 
\label{eq:SAV-BDFk-3}
& & \frac{\alpha_{k} r^{n+1}-A_{k}\left(r^{n}\right)}{\delta t} =\frac{1}{2 \sqrt{E_{1}(B_{k}(\phi^{n}))+C_{0}}} \int_{\Omega} F^{\prime}(B_{k}(\phi^{n})) \frac{\alpha_{k} \phi^{n+1}-A_{k}\left(\phi^{n}\right)}{\delta t} \mathrm{d} \mathbf{x}, 
\end{eqnarray}
where $\alpha_{k}$,  $A_{k}$ and $B_{k}$ 
can  be derived  by Taylor expansion. For the readers' convenience, we provide them for $k=1,2,3$ below:

First-order:
\begin{equation} \label{eq:perameter-BDF1}
	\alpha_{1}=1, \quad A_{1}\left(\phi^{n}\right)=\phi^{n}, \quad B_{1}\left(\phi^{n}\right)=\phi^{n};
\end{equation}
 
Second-order:
\begin{equation}\label{eq:perameter-BDF2}
	\alpha_{2}=\frac{3}{2}, \quad A_{2}\left(\phi^{n}\right)=2 \phi^{n}-\frac{1}{2} \phi^{n-1}, \quad B_{2}\left(\phi^{n}\right)=2 \phi^{n}-\phi^{n-1}.
\end{equation}

Third-order:
\begin{equation}\label{eq:perameter-BDF3}
	\alpha_{3}=\frac{11}{6}, \quad A_{3}\left(\phi^{n}\right)=3 \phi^{n}-\frac{3}{2} \phi^{n-1}+\frac{1}{3} u^{n-2}, \quad B_{3}\left(\bar{\phi}^{n}\right)=3 \bar{\phi}^{n}-3 \bar{\phi}^{n-1}+\bar{\phi}^{n-2}.
\end{equation}

It has been shown that the scheme \eqref{eq:SAV-BDFk-1}-\eqref{eq:SAV-BDFk-3} for $k=1,2$ is unconditionally energy stable with a modified energy. 
For example, the modified energy is $\tilde E(\phi^{n+1}) =\frac 12 (\mathcal L \phi^{n+1},\phi^{n+1}) +r^{n+1}$ for $k=1$, where $r^{n+1}$ is only weakly linked to
$\sqrt{E_{1}(B_{k}(\phi^{n+1}))+C_{0}}$.  
The consequence is that when the time step is not sufficiently small, the modified energy can deviate far away from the  original energy, leading to inaccurate solutions.
\subsection{The relaxed SAV approach} 
The relaxed SAV (R-SAV) approach proposed  in  \cite{jiang2022improving} is as follows:

Step 1. Calculate the solution $\left(\phi^{n+1}, \tilde r^{n+1}\right)$ based on original SAV approach
\begin{eqnarray}
\label{eq:R-SAV-BDFk-1}
& & \frac{\alpha_{k} \phi^{n+1}-A_{k}\left(\phi^{n}\right)}{\delta t} =-\mathcal{G} \mu^{n+1}, \\
\label{eq:R-SAV-BDFk-2}
& & \mu^{n+1} =\mathcal{L} \phi^{n+1}+\frac{\tilde r^{n+1}}{\sqrt{E_{1}(B_{k}(\phi^{n}))+C_{0}}} F^{\prime}(B_{k}(\phi^{n})), \\ 
\label{eq:R-SAV-BDFk-3}
& & \frac{\alpha_{k} \tilde r^{n+1}-A_{k}\left(r^{n}\right)}{\delta t} =\frac{1}{2 \sqrt{E_{1}(B_{k}(\phi^{n}))+C_{0}}} \int_{\Omega} F^{\prime}(B_{k}(\phi^{n})) \frac{\alpha_{k} \phi^{n+1}-A_{k}\left(\phi^{n}\right)}{\delta t} \mathrm{d} \mathbf{x}, 
\end{eqnarray}

Step 2. Update the SAV $r^{n+1}$ by a relaxation
\begin{equation} \label{eq:R-SAV-BDFk-4}
r^{n+1} = \zeta_{0} \tilde r^{n+1} + \left(1-\zeta_{0}\right) E_{1}\left(\phi\right), \quad \zeta_{0} \in \mathcal{V}.
\end{equation}
Here, $\mathcal{V}$ is a set defined by 
\begin{equation}
	\mathcal{V}=\left\lbrace \zeta\in [0,1] \; s.t. \; \left|r^{n+1}\right|^{2} -\left|\tilde r^{n+1}\right|^{2} \leq \delta t \gamma \left(\mathcal{G} \mu^{n+1}, \mu^{n+1}\right) \right\rbrace
\end{equation}
for BDF$1$ scheme, and 
\begin{equation}
	\mathcal{V}=\left\lbrace \zeta\in [0,1] \; s.t. \;  \frac{1}{2}\left(\left|r^{n+1}\right|^{2} + \left|2r^{n+1}-r^{n}\right|^{2} -\left|\tilde r^{n+1}\right|^{2} - \left|2 \tilde r^{n+1}-\tilde r^{n}\right|^{2}\right) \leq \delta t \gamma \left(\mathcal{G} \mu^{n+1}, \mu^{n+1}\right) \right\rbrace
\end{equation}
for BDF$2$ scheme, where $\gamma \in [0, 1]$ is a tunable parameter. 

\textbf{Remark 2.1 (Optimal choice for $\zeta_{0}$)}. 
Here we explain the optimal choice for relaxation parameter $\zeta_{0}$. 
Let's take BDF$2$ as an example, $\zeta_{0}$ can be chosen as a solution of the following optimization problem,
\begin{equation}
\xi_{0}=\min _{\xi \in[0,1]} \xi, \quad \text { s.t. }\frac{1}{2}\left(\left|r^{n+1}\right|^{2}+\left|2 r^{n+1}-r^{n}\right|^{2}-\left(\left|\tilde{r}^{n+1}\right|^{2}+\left|2 \tilde{r}^{n+1}-r^{n}\right|^{2}\right)\right) \leq \delta \operatorname{t\gamma}\left(\mathcal{G} \mu^{n+1}, \mu^{n+1}\right),
\end{equation}
with $r^{n+1} = \zeta_{0} \tilde r^{n+1} + \left(1-\zeta_{0}\right) E_{1}\left(\phi^{n+1}\right)$. 
This can be simplified as
\begin{equation}
\xi_{0}=\min _{\xi \in[0,1]} \xi, \quad \text { s.t. } a \xi^{2}+b \xi+c \leq 0,
\end{equation}
where the coefficients are 
\begin{equation}
\begin{array}{l}
a=\frac{5}{2}\left(\tilde{r}^{n+1}-E_{1}\left(\phi^{n+1}\right)\right)^{2}, \\ 
b=\left(\tilde{r}^{n+1}-E_{1}\left(\phi^{n+1}\right)\right)\left(5 E_{1}\left(\phi^{n+1}\right)-2 r^{n}\right), \\ 
c=\frac{1}{2}\left(\left(E_{1}\left(\phi^{n+1}\right)\right)^{2}+\left(2 E_{1}\left(\phi^{n+1}\right)-r^{n}\right)^{2}-\left(\tilde{r}^{n+1}\right)^{2}-\left(2 \tilde{r}^{n+1}-r^{n}\right)^{2}\right)-\delta t \gamma\left(\mathcal{G} \mu^{n+1}, \mu^{n+1}\right).\end{array}
\end{equation}
Notice that $a+b+c=-\delta t \gamma\left(\mathcal{G} \mu^{n+1}, \mu^{n+1}\right) \leq 0$, we have $1\in \mathcal{V}$ which means $\mathcal{V} \neq \emptyset$. 
Given $a \neq 0$, the optimization problem can be solved as 
\begin{equation}
\xi_{0}=\max \left\{0, \frac{-b-\sqrt{b^{2}-4 a c}}{2 a}\right\}.
\end{equation}
Since $$b^{2}-4 a c=\left(E_{1}\left(\phi^{n+1}\right) - \tilde{r}^{n+1}\right)\left(4\left(r^{n}\right)^{2} - 20r^{n}\tilde{r}^{n+1} + 25\left(\tilde{r}^{n+1}\right)^{2} + 10 \delta t \gamma \left(\mathcal{G} \mu^{n+1}, \mu^{n+1}\right)\right), $$ 
we usually take the $\gamma$ to be close to 1. 

It can be shown that the scheme \eqref{eq:R-SAV-BDFk-1}-\eqref{eq:R-SAV-BDFk-4} with $k=1,2$ is unconditionally energy stable in the sense that\\
(\romannumeral1) For $k=1$, $R_{R-SAV-BDF1}^{n+1} - R_{R-SAV-BDF1}^{n}\leq-\delta t\left(1-\gamma\right)\left(\mathcal{G} \mu^{n+1}, \mu^{n+1}\right)$, where $$R_{R-SAV-BDF1}^{n+1}=\frac{1}{2}\left(\mathcal{L} \phi^{n+1}, \phi^{n+1}\right) +\left|r^{n+1}\right|^{2};$$\\
(\romannumeral2) For $k=2$, $R_{R-SAV-BDF2}^{n+1} - R_{R-SAV-BDF2}^{n}\leq-\delta t\left(1-\gamma\right)\left(\mathcal{G} \mu^{n+1}, \mu^{n+1}\right)$,
where 
$$
\begin{aligned}
	R_{R-SAV-BDF2}^{n+1}&=\frac{1}{4}\left(\left(\mathcal{L} \phi^{n+1}, \phi^{n+1}\right) + \left(\mathcal{L}\left(2\phi^{n+1}-\phi^{n}\right), 2\phi^{n+1}-\phi^{n}\right)\right) \\
	&  +\frac{1}{2}\left(\left|r^{n+1}\right|^{2} + \left|2r^{n+1}-r^{n}\right|^{2}\right).
\end{aligned}
$$ 
Note that by the definition in \eqref {eq:R-SAV-BDFk-4}, $r^{n+1}$ is directly linked to $E_1(\phi^{n+1})$. Hence, the modified energy above is directly linked to the original free energy.

\subsection{The generalized SAV (GSAV)  approach}

The GSAV approach was proposed in \cite{huang2021implicit} for  the following general dissipative systems:
\begin{align}\label{eq: new1}
 \frac{\partial \phi}{\partial t}+\mathcal{A} \phi +g(\phi)=0, 
\end{align} 
where $\mathcal{A}$ is a positive operator and $g(\phi)$ is a semi-linear or quasil-inear operator. We assume it  satisfies an energy dissipation law as follows
\begin{align}
	\frac{d E_{tot}(\phi)}{dt} =-\mathcal{K}(\phi),\label{eq: new2}
\end{align} 
where $E_{tot}(\phi)$ is a free energy with lower bound $-C_0$ and $\mathcal{K}(\phi)>0$ for all $\phi$.
Setting $E(\phi) = E_{tot}(\phi)+C_0$ and introducing a SAV $R(t)=E(\phi)$,  we rewrite the equation \eqref{eq: new1} as the following system
\begin{equation}\label{eq:model-problem-new-SAV}
\left\{\begin{aligned}
& \frac{\partial \phi}{\partial t}+\mathcal{A} (\phi)+g(\phi)=0, \\
& \frac{\mathrm{d} E(\phi)}{\mathrm{d} t}=-\frac{R(t)}{E(\phi)}\mathcal{K}(\phi).
\end{aligned}\right.
\end{equation}
Then, the BDF$k$ GSAV schemes are as follows:
\begin{eqnarray}
\label{eq:new-SAV-BDFk-1}
& & \frac{\alpha_{k} \bar{\phi}^{n+1}-A_{k}\left(\phi^{n}\right)}{\delta t}+\mathcal{A} \bar{\phi}^{n+1} +g\left[B_{k}\left(\bar{\phi}^{n}\right)\right]=0, \\
\label{eq:new-SAV-BDFk-3}
& & \frac{1}{\delta t}\left(R^{n+1}-R^{n}\right)=-\frac{R^{n+1}}{E\left(\bar{\phi}^{n+1}\right)} \mathcal{K}( \bar{\phi}^{n+1}), \\
\label{eq:new-SAV-BDFk-4} 	
& & \xi^{n+1}=\frac{R^{n+1}}{E\left(\bar{\phi}^{n+1}\right)}, \\
\label{eq:new-SAV-BDFk-5}
& & \phi^{n+1}=\eta_{k}^{n+1} \bar{\phi}^{n+1} \text { with } \eta_{k}^{n+1}=1-\left(1-\xi^{n+1}\right)^{k+1},
\end{eqnarray}
where $\alpha_{k}$, the operators $A_{k}$ and $B_{k}$ are as above.

It is shown in \cite{huang2021implicit} that the above scheme is unconditional energy stable with a modified energy. However, as the original SAV approach, the dynamics of SAV $R^{n+1}$ is only weakly linked to the energy $E(\phi^{n+1})$ and may deviate from it when the time step is not sufficiently small.

\section{The relaxed generalized  (R-GSAV) SAV approach} \label{sec:R-IMEX-SAV}
Inspired by the R-SAV approach described above, we construct below the relaxed GSAV approach, which not only inherits all the advantages of the GSAV approach, but can also significantly improve its accuracy.

%

Given $\phi^{n-k}, ..., \phi^{n}, R^{n-k}, ..., R^{n}$, we compute $\phi^{n+1}, R^{n+1}$ via the following two steps:

\textbf{Step 1:} Determine an intermediate solution $(\phi^{n+1}, \tilde{R}^{n+1})$ by using  the GSAV method.
\begin{eqnarray}
\label{eq:R-IMEX-SAV-BDFk-1}
& & \frac{\alpha_{k} \bar{\phi}^{n+1}-A_{k}\left(\phi^{n}\right)}{\delta t}+\mathcal{A} \bar{\phi}^{n+1}+g\left[B_{k}\left(\bar{\phi}^{n}\right)\right]=0, \\
\label{eq:R-IMEX-SAV-BDFk-3}
& & \frac{1}{\delta t}\left(\tilde{R}^{n+1}-R^{n}\right)=-\frac{\tilde{R}^{n+1}}{E\left(\bar{\phi}^{n+1}\right)}  \mathcal{K}( \bar{\phi}^{n+1}), \\ 
\label{eq:R-IMEX-SAV-BDFk-4}	
& & \xi^{n+1}=\frac{\tilde{R}^{n+1}}{E\left(\bar{\phi}^{n+1}\right)}, \\
\label{eq:R-IMEX-SAV-BDFk-5}
& & \phi^{n+1}=\eta_{k}^{n+1} \bar{\phi}^{n+1} \text { with } \eta_{k}^{n+1}=1-\left(1-\xi^{n+1}\right)^{k+1},
\end{eqnarray}

\textbf{Step 2:} Update the SAV $R^{n+1}$ via the following relaxation: 
\begin{equation} \label{eq:R-IMEX-SAV-BDFk-6}
	R^{n+1} = \zeta_{0} \tilde{R}^{n+1} + (1-\zeta_{0})E(\phi^{n+1}), \quad \zeta_{0} \in \mathcal{V},
\end{equation}
where
\begin{eqnarray}
\label{eq:set-condition-2}
\mathcal{V}=\left\lbrace \zeta\in [0,1] \; s.t. \;   \frac{R^{n+1}-\tilde{R}^{n+1}}{\delta t} \leq -\gamma^{n+1} \mathcal{K}( {\phi}^{n+1})+ \frac{\tilde{R}^{n+1}}{E(\bar{\phi}^{n+1})} \mathcal{K}( \bar{\phi}^{n+1})\right\rbrace,
\end{eqnarray}
with  $\gamma^{n+1} \geq 0$ to be determined so that $\mathcal{V}$ is not empty.

 We explain below how to choose   $\zeta_{0}$ and  $\gamma^{n+1}$. 
Plugging \eqref{eq:R-IMEX-SAV-BDFk-6} into the inequality of \eqref{eq:set-condition-2}, we find that if we choose  $\zeta_0$ and $\gamma^{n+1}$ such that the following condition is satisfied:
\begin{equation}\label{cond_zeta}
(\tilde{R}^{n+1}-E(\phi^{n+1}))\zeta_0 \leq \tilde{R}^{n+1}-E(\phi^{n+1})-\delta t \gamma^{n+1} \mathcal{K}( {\phi}^{n+1})+ \delta t \frac{\tilde{R}^{n+1}}{E(\bar{\phi}^{n+1})} \mathcal{K}( \bar{\phi}^{n+1}),
\end{equation}
then, $\zeta_0\in \mathcal{V}$. The next theorem summarizes the choice of $\zeta_0$ and $\gamma^{n+1}$.

\begin{thm}\label{Th: stability}
We choose $\zeta_{0}$ in \eqref{eq:R-IMEX-SAV-BDFk-6} and $\gamma^{n+1}$ in \eqref{eq:set-condition-2} as follows:
\begin{enumerate}
\item If $\tilde{R}^{n+1} = E(\phi^{n+1})$, we set $\zeta_{0}=0$ and $\gamma^{n+1}=\frac{\tilde{R}^{n+1} \mathcal{K}( \bar{\phi}^{n+1})}{E(\bar{\phi}^{n+1}) \mathcal{K}( {\phi}^{n+1})}$;
\item If $\tilde{R}^{n+1} > E(\phi^{n+1})$, we set $\zeta_{0}=0$ and
\begin{equation}\label{eq:gamma}
\gamma^{n+1} = \frac{\tilde{R}^{n+1}-E(\phi^{n+1})}{\delta t \mathcal{K}(\phi^{n+1})} + \frac{\tilde{R}^{n+1} \mathcal{K}( \bar{\phi}^{n+1})}{E(\bar{\phi}^{n+1}) \mathcal{K}( {\phi}^{n+1})};
\end{equation}

\item If $\tilde{R}^{n+1} < E(\phi^{n+1})$ and $\tilde{R}^{n+1}-E(\phi^{n+1})+ \delta t \frac{\tilde{R}^{n+1}}{E(\bar{\phi}^{n+1})} \mathcal{K}( \bar{\phi}^{n+1}) \geq 0$, we set $\zeta_{0}=0$ and 	
$\gamma^{n+1}$ the same as \eqref{eq:gamma}.  
	\item  If $\tilde{R}^{n+1} < E(\phi^{n+1})$ and $\tilde{R}^{n+1}-E(\phi^{n+1})+ \delta t \frac{\tilde{R}^{n+1}}{E(\bar{\phi}^{n+1})} \mathcal{K}( \bar{\phi}^{n+1})< 0$, we set $\zeta_{0}=1-\frac{\delta t \tilde{R}^{n+1} \mathcal{K}( \bar{\phi}^{n+1})}{E(\bar{\phi}^{n+1})\left(E(\phi^{n+1})-\tilde{R}^{n+1}\right)}$ and 
$\gamma^{n+1}=0$.
	\end{enumerate}
Then,    \eqref{cond_zeta} is satisfied in all cases above and  $\zeta_0\in \mathcal{V}$. Furthermore,  	given $R^{n} \geq 0$, 
we have $R^{n+1}\geq 0,\, \xi^{n+1} \geq 0$, and the scheme \eqref{eq:R-IMEX-SAV-BDFk-1}-\eqref{eq:R-IMEX-SAV-BDFk-6} with the above choice of  $\zeta_0$ and  $\gamma^{n+1}$  is unconditionally energy stable in the sense that
\begin{equation}\label{eq:stability}
	R^{n+1} - R^{n} \leq -\delta t \gamma^{n+1} \mathcal{K}( {\phi}^{n+1})\leq 0.
\end{equation}
Furthermore, we have 
\begin{equation}\label{eq:stability2}
	R^{n+1}\le E(\phi^{n+1})\quad\forall n\ge 0.
\end{equation}
In addition, 
if $E(\phi)=(\mathcal{L}\phi,\phi)+E_1(\phi)$ with $\mathcal{L}$ being a linear positive operator and  $E_1(\phi)$ bounded from below, there exists $M_{k}>0$ such that
\begin{equation}\label{eq:numerical-solution-bound}
\left(\mathcal{L} \phi^{n}, \phi^{n}\right) \leq M_{k}^{2}, \quad \forall n.
\end{equation}
\end{thm}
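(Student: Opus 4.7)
The proof plan is to address each conclusion of the theorem in turn, with a case analysis on the four scenarios serving as the backbone for the first four statements and a separate argument for the final $L^\infty$-type bound. The starting point is to substitute the definition \eqref{eq:R-IMEX-SAV-BDFk-6} of $R^{n+1}$ into the constraint in \eqref{eq:set-condition-2} and rearrange, which, as the excerpt already notes, reduces the admissibility condition $\zeta_0\in\mathcal{V}$ to the scalar inequality \eqref{cond_zeta}. I then verify \eqref{cond_zeta} separately for the four scenarios.

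For cases 1--3, the prescribed value $\zeta_0=0$ makes the left-hand side of \eqref{cond_zeta} vanish, so it suffices to show that the $\gamma^{n+1}$ chosen in \eqref{eq:gamma} makes the right-hand side non-negative; a one-line substitution reveals that the right-hand side in fact equals $0$. The sign hypotheses on $\tilde R^{n+1}-E(\phi^{n+1})$ are used only to guarantee that the corresponding $\gamma^{n+1}$ is itself non-negative, which in case 3 is precisely the second auxiliary inequality. For case 4, I plug the prescribed $\zeta_0$ into \eqref{cond_zeta} and observe that, after using $1-\zeta_0=\delta t\,\tilde R^{n+1}\mathcal{K}(\bar\phi^{n+1})/[E(\bar\phi^{n+1})(E(\phi^{n+1})-\tilde R^{n+1})]$, the inequality holds with equality by elementary algebra. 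The case-4 auxiliary inequality is exactly what ensures $\zeta_0\in[0,1]$, since both the numerator and denominator of the fraction defining $1-\zeta_0$ are positive under that hypothesis.

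With $\zeta_0\in\mathcal{V}$ established, the remaining properties follow quickly. Solving \eqref{eq:R-IMEX-SAV-BDFk-3} gives $\tilde R^{n+1}=R^n/(1+\delta t\,\mathcal{K}(\bar\phi^{n+1})/E(\bar\phi^{n+1}))\ge 0$, so $\xi^{n+1}\ge 0$, and $R^{n+1}\ge 0$ as a convex combination of non-negative quantities. The stability bound \eqref{eq:stability} is a direct rewriting of the defining inequality of $\mathcal{V}$ once one eliminates $\tilde R^{n+1}\mathcal{K}(\bar\phi^{n+1})/E(\bar\phi^{n+1})$ via \eqref{eq:R-IMEX-SAV-BDFk-3}. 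For \eqref{eq:stability2}, cases 1--3 give $R^{n+1}=E(\phi^{n+1})$ because $\zeta_0=0$, whereas in case 4 the convex combination of $\tilde R^{n+1}<E(\phi^{n+1})$ with $\zeta_0\in[0,1]$ yields $R^{n+1}\le E(\phi^{n+1})$; as a by-product, substituting the explicit $\zeta_0$ in case 4 and using \eqref{eq:R-IMEX-SAV-BDFk-3} again gives the striking identity $R^{n+1}=R^n$ in that case.

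The final bound \eqref{eq:numerical-solution-bound} is the step I expect to require the most care. Iterating \eqref{eq:stability} yields $0\le R^n\le R^0$ for all $n$. Writing $\phi^{n+1}=\eta_k^{n+1}\bar\phi^{n+1}$ and using the lower bound $E_1\ge -C$ reduces the task to bounding $(\eta_k^{n+1})^2\bigl(E(\bar\phi^{n+1})+C\bigr)$ uniformly. Via $E(\bar\phi^{n+1})=\tilde R^{n+1}/\xi^{n+1}$ and $\eta_k^{n+1}=1-(1-\xi^{n+1})^{k+1}$, the main term factors as $\tilde R^{n+1}\bigl(1-(1-\xi^{n+1})^{k+1}\bigr)^2/\xi^{n+1}$, i.e.\ $\tilde R^{n+1}$ times a polynomial in $\xi^{n+1}$ of degree $2k+1$. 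The obstacle is that this polynomial is unbounded as $\xi\to\infty$, so one cannot directly read off a uniform bound; the argument must exploit the linear elliptic equation \eqref{eq:R-IMEX-SAV-BDFk-1} satisfied by $\bar\phi^{n+1}$ to control $E(\bar\phi^{n+1})$ from above in terms of previous iterates, following the strategy developed in the original GSAV analysis, and then combine this with the monotone decay $R^{n+1}\le R^0$ to close an induction on $n$.
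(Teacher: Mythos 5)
Your treatment of the case analysis, the verification of \eqref{cond_zeta}, the non-negativity of $\tilde R^{n+1}$, $\xi^{n+1}$, $R^{n+1}$, the stability bound \eqref{eq:stability}, and \eqref{eq:stability2} is correct and follows essentially the same route as the paper (your by-product identity $R^{n+1}=R^{n}$ in Case 4 is also correct and consistent with $\gamma^{n+1}=0$ there). The problem is the last claim, \eqref{eq:numerical-solution-bound}, which you do not actually prove: you stop at a sketch, and the sketch rests on an obstacle that does not exist and on a fix that is neither needed nor the strategy of the GSAV analysis you invoke.

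Concretely, the "obstacle" that the polynomial $\bigl(1-(1-\xi)^{k+1}\bigr)^{2}/\xi$ is unbounded as $\xi\to\infty$ is vacuous here, because $\xi^{n+1}$ is uniformly bounded: after shifting so that $E_1(\phi)>1$ (allowed since $E_1$ is bounded below), one has $E(\bar\phi^{n+1})\ge \tfrac12\bigl((\mathcal{L}\bar\phi^{n+1},\bar\phi^{n+1})+2\bigr)\ge 1$, while the monotone decay \eqref{eq:stability} together with \eqref{tilder} gives $\tilde R^{n+1}\le M:=R^{0}$; hence $0\le \xi^{n+1}\le \frac{2M}{(\mathcal{L}\bar\phi^{n+1},\bar\phi^{n+1})+2}\le M$. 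The decisive point, which your plan misses, is that this bound makes $\eta_k^{n+1}$ \emph{decay} with the size of $\bar\phi^{n+1}$: writing $\eta_k^{n+1}=\xi^{n+1}P_k(\xi^{n+1})$ with $P_k$ a degree-$k$ polynomial and $\xi^{n+1}\in[0,M]$, one gets $|\eta_k^{n+1}|\le \frac{M_k}{(\mathcal{L}\bar\phi^{n+1},\bar\phi^{n+1})+2}$, and therefore
\begin{equation*}
(\mathcal{L}\phi^{n+1},\phi^{n+1})=(\eta_k^{n+1})^{2}(\mathcal{L}\bar\phi^{n+1},\bar\phi^{n+1})
\le \Bigl(\tfrac{M_k}{(\mathcal{L}\bar\phi^{n+1},\bar\phi^{n+1})+2}\Bigr)^{2}(\mathcal{L}\bar\phi^{n+1},\bar\phi^{n+1})\le M_k^{2},
\end{equation*}
with no upper bound on $E(\bar\phi^{n+1})$ or on $\bar\phi^{n+1}$ required at all. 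Your proposed alternative --- controlling $E(\bar\phi^{n+1})$ from above through the linear step \eqref{eq:R-IMEX-SAV-BDFk-1} and closing an induction on $n$ --- is not what the GSAV argument does, and it would be genuinely problematic to carry out unconditionally in $\delta t$, since the explicitly treated nonlinear term can make $\bar\phi^{n+1}$ arbitrarily large; the whole purpose of the $\eta_k^{n+1}$-scaling is to avoid ever needing such a bound. As written, the proof of \eqref{eq:numerical-solution-bound} is therefore missing.
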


\begin{proof}
	It can be directly verified that,  with the above choice of   $\zeta_{0}$  and $\gamma^{n+1}$,   \eqref{cond_zeta} is satisfied in all cases so that   $\zeta_0\in \mathcal{V}$.
	
Given $R^{n} \geq 0$.	 Since $E(\bar{\phi}^{n+1})>0$, it follows from \eqref{eq:R-IMEX-SAV-BDFk-3} that 
\begin{equation}\label{tilder}
\tilde{R}^{n+1}=\frac{R^{n}}{1+\delta t \frac{ \mathcal{K}( \bar{\phi}^{n+1})}{E(\bar{\phi}^{n+1})}} \geq 0.
\end{equation}
Then we derive from \eqref{eq:R-IMEX-SAV-BDFk-4} that $\xi^{n+1}\ge 0$, and we  derive from \eqref{eq:R-IMEX-SAV-BDFk-6} that $R^{n+1} \geq 0$.

Combining \eqref{eq:R-IMEX-SAV-BDFk-3} and \eqref{eq:set-condition-2}, we obtain \eqref{eq:stability}. 

For Cases 1-3,  we have $\zeta_0=0$ so $R^{n+1}= E(\phi^{n+1})$. For Case 4, since 
$\zeta_{0}=1-\frac{\delta t \tilde{R}^{n+1} \mathcal{K}( \bar{\phi}^{n+1})}{E(\bar{\phi}^{n+1})\left(E(\phi^{n+1})-\tilde{R}^{n+1}\right)}\in [0,1]$ and $\tilde R^{n+1}\le  E(\phi^{n+1})$, we derive from \eqref{eq:R-IMEX-SAV-BDFk-6} that $R^{n+1}\le E(\phi^{n+1})$.

The proof of \eqref{eq:numerical-solution-bound} is essentially the same as the proof of  Theorem 1 of the GSAV scheme in \cite{huang2021implicit}. For the readers' convenience, we provide the proof below.

Denote $M:=R^0=E[\phi(\cdot,0)]$, then we derive from  \eqref{eq:stability} and \eqref{tilder} that  $\tilde R^{n+1} \le M,\, \forall n$.

Without loss of generality, we can assume $E_1(\phi)>1$ for all $\phi$. It then follows from \eqref{eq:R-IMEX-SAV-BDFk-4}  that
\begin{equation}\label{eq: xibound}
	|\xi^{n+1}|=\frac{\tilde R^{n+1}}{E(\bar{\phi}^{n+1})} \le \frac{2M}{(\mathcal{L}\bar{\phi}^{n+1}, \bar{\phi}^{n+1})+2}.
\end{equation}
Then $\eta_k^{n+1}=1-\left(1-\xi^{n+1}\right)^{k+1}=\xi^{n+1}P_{k}(\xi^{n+1})$ with   $P_{k}$ being a polynomial  of degree $k$. We derive from \eqref{eq: xibound} that  there exists $M_k>0$ such that
\begin{equation*}\label{eq: etabound}
	|\eta_k^{n+1}|= |\xi^{n+1}P_{k}(\xi^{n+1})| \le \frac{{M_k}}{(\mathcal{L}\bar \phi^{n+1}, \bar \phi^{n+1})+2},
\end{equation*}
which, along with  $\phi^{n+1}=\eta_k^{n+1}\bar \phi^{n+1}$, implies
\begin{equation*}
	\begin{split}
		(\mathcal{L}\phi^{n+1}, \phi^{n+1})&=(\eta_k^{n+1})^2(\mathcal{L}\bar{\phi}^{n+1}, \bar{\phi}^{n+1}) \\
		&\le \big(\frac{{M_k}}{(\mathcal{L}\bar \phi^{n+1}, \bar \phi^{n+1})+2}\big)^2(\mathcal{L}\bar{\phi}^{n+1}, \bar{\phi}^{n+1}) \le M^2_k.
	\end{split}
\end{equation*}
The proof is complete.
\end{proof}

\begin{remark}
From an energy approximation point of view, it is best that $R^{n+1}=E(\phi^{n+1})$. However, simply setting 
$R^{n+1}=E(\phi^{n+1})$ at each time step destroys the energy stability. 	We observe from the above statements  that   in most cases (Cases 1, 2, 3),  we have  $\zeta_0=0$ which implies $R^{n+1}=E(\phi^{n+1})$,
and then thanks to  \eqref{eq:stability} and \eqref{eq:stability2}, we have
\begin{equation}
	E(\phi^{n+1})=R^{n+1}\le R^n\le E(\phi^n) \quad (\text{Cases 1, 2, 3}).
\end{equation}
Only in   Case 4 where $\tilde R^{n+1}$ is significantly smaller than $E(\phi^{n+1})$ , $R^{n+1}$ is a value between  ($\tilde R^{n+1}$, $E(\phi^{n+1})$), and we can not prove $E(\phi^{n+1})\le E(\phi^n)$.
Hence, the original energy is proved to be dissipative in most situations, which is a significant improvement over  the GSAV scheme.
\end{remark}

\section{Error estimate}\label{sec:Error-estimate}
In this section, we will derive error estimates for the R-GSAV schemes applied to  Allen-Cahn type equations and Cahn-Hilliard equations by using the stability results in  Theorem \ref{Th: stability}. 

 We recall first some preliminary results which play a critical role in error analysis.

\begin{lem}\label{lem:stability-tool}  \cite{nevanlinna1981multiplier}
	For $1 \leq k \leq 5$, there exist $0 \leq \tau_k<1$, a positive definite symmetric matrix $G=(g_{ij})\in\mathcal{R}^{k,k}$ and real numbers $a_0, ..., a_k$ such that 
$$
\begin{aligned}
\left(\alpha_{k} \phi^{n+1}-A_{k}\left(\phi^{n}\right), \phi^{n+1}-\tau_{k} \phi^{n}\right) &=\sum_{i, j=1}^{k} g_{i j}\left(\phi^{n+1+i-k}, \phi^{n+1+j-k}\right) \\ &-\sum_{i, j=1}^{k} g_{i j}\left(\phi^{n+i-k}, \phi^{n+j-k}\right)+\left\|\sum_{i=0}^{k} a_{i} \phi^{n+1+i-k}\right\|^{2}, 
\end{aligned}
$$
where the smallest possible values of $\tau_k$ are 
	\begin{equation} \label{tau_k}
		 \tau_1=\tau_2=0, \quad \tau_3=0.0836, \quad \tau_4 =0.2878, \quad \tau_5=0.8160. 
		 \end{equation}
\end{lem}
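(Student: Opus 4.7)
The plan is to recognize this lemma as the Nevanlinna--Odeh $G$-stability identity for BDF$k$ and to prove it via the classical three-step argument based on generating polynomials. First, I would encode the BDF$k$ combination $\alpha_k \phi^{n+1}-A_k(\phi^n)$ as the action on the window $\Phi^n=(\phi^{n+1-k},\ldots,\phi^{n+1})^\top$ of the generating polynomial $\delta(\zeta)=\alpha_k\zeta^k-\sum_{j=0}^{k-1} c_j\zeta^{j}$, where the $c_j$ are the coefficients of $A_k$, and the multiplier vector $\phi^{n+1}-\tau_k\phi^n$ as the polynomial $\mu(\zeta)=\zeta^k-\tau_k\zeta^{k-1}$. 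The identity claimed in the lemma is then equivalent to a symmetric matrix decomposition
\begin{equation*}
	\tfrac12\bigl(D_\delta^\top M_\mu + M_\mu^\top D_\delta\bigr)=G-S^\top G S+aa^\top
\end{equation*}
on the window space, where $D_\delta, M_\mu$ are the lower-triangular coefficient matrices of $\delta,\mu$, $S$ is the one-step shift $\Phi^n\mapsto\Phi^{n-1}$, and $G=(g_{ij})$ is symmetric positive definite. Interpreting this matrix identity in inner-product form against $\Phi^{n+1}$ yields exactly the telescoping equality stated in the lemma, with $a=(a_0,\ldots,a_k)$ reading off the last term.

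Second, by Dahlquist's classical reformulation, such a decomposition with $G\succ 0$ exists if and only if the pair $(\delta,\mu)$ satisfies the boundary positivity condition
\begin{equation*}
	\operatorname{Re}\bigl[\delta(\zeta)\,\overline{\mu(\zeta)}\bigr]\ge 0\quad\text{for all }|\zeta|=1,
\end{equation*}
with equality only at the consistency point $\zeta=1$. The factor $(1-|\zeta|^2)$ extracted from $\operatorname{Re}[\delta(\zeta)\overline{\mu(\zeta)}]$ defines the Hermitian form $G(\zeta,\bar\zeta)$, and the remaining nonnegative trigonometric polynomial admits a Fej\'er--Riesz factorization $|\sigma(\zeta)|^2$ with $\sigma(\zeta)=\sum_{i=0}^{k} a_i\zeta^i$. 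Translating this complex-analytic identity back to the window space via the shift structure produces precisely the matrix decomposition from the first step.

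Third, I would verify the boundary inequality for each $k\in\{1,\ldots,5\}$ at the stated threshold $\tau_k$. For $k=1,2$, the BDF polynomial itself satisfies $\operatorname{Re}[\delta(\zeta)\overline{\zeta^k}]\ge 0$ on the unit circle (this is A-stability of BDF1/BDF2), so $\tau_k=0$ is admissible. For $k=3,4,5$, I would substitute $\zeta=e^{i\theta}$ and expand $\operatorname{Re}[\delta(e^{i\theta})\overline{\mu(e^{i\theta})}]$ as a real polynomial $P_k(\cos\theta;\tau_k)$ of low degree in $\cos\theta$. The smallest admissible $\tau_k$ is characterized by $P_k(\cdot;\tau_k)$ having a double zero on $[-1,1]$, an algebraic condition whose numerical solution yields the values in \eqref{tau_k}. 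Extraction of $G$ and $a$ then follows by Fej\'er--Riesz factorization of the residual trigonometric polynomial together with a Cholesky-type decomposition of the Hermitian form.

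The main obstacle is the third step for $k=3,4,5$: at the critical threshold $\tau_k$ the polynomial $P_k(\cos\theta;\tau_k)$ is only marginally nonnegative, vanishing to second order at some $\theta_k\in(0,\pi)$, so proving nonnegativity on $[-1,1]$ and then explicitly constructing the positive-definite $G$ and the vector $a$ requires delicate bookkeeping of the Fej\'er--Riesz factors. For the specific low-order BDF polynomials here these computations are tractable and have been tabulated in the $G$-stability literature (Nevanlinna--Odeh, Akrivis--Lubich), so I would reuse those explicit $G$ and $a$ to close the proof; the same technique breaks down beyond $k=5$, which is why the lemma is stated only up to fifth order.
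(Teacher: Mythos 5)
The paper does not prove this lemma at all---it is quoted verbatim from the cited reference \cite{nevanlinna1981multiplier}---and your outline is a faithful reconstruction of exactly that classical Nevanlinna--Odeh/Dahlquist argument: reformulate the telescoping identity as a matrix decomposition, reduce existence of $G$ and $a$ to the boundary positivity of $\operatorname{Re}[\delta(\zeta)\overline{\mu(\zeta)}]$ on the unit circle via Dahlquist's equivalence theorem and Fej\'er--Riesz factorization, and determine the critical multipliers $\tau_k$ from the double-zero condition. Your approach therefore coincides with the source the paper relies on, and no gap needs to be flagged beyond the (acknowledged) deferral of the explicit tabulated $G$ and $a$ to that literature.
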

and $\alpha_k, A_k$ are constant and operator related to BDFk IMEX schemes as described in the last section. 

The following regularity results for \eqref{eq:model-problem} are given in \cite{shen2018convergence, temam2012infinite}. 
\begin{thm}\label{Th:solution-regularity}
	Assume $\phi^0 \in H^2(\Omega)$ and the following holds 
	\begin{equation} \label{eq:assumption-1}
	\left|F^{\prime \prime}(x)\right|<C\left(|x|^{p}+1\right), \quad p>0 \text { arbitrary if } d=1,2 ; \quad 0<p<4 \text { if } d=3.
	\end{equation}
	Then for $\mathcal{G}=I$, problem \eqref{eq:model-problem} has a unique solution for any $T>0$ in the space
	\begin{equation}
	C\left([0, T] ; H^{2}(\Omega)\right) \cap L^{2}\left(0, T ; H^{3}(\Omega)\right).
	\end{equation}
	Furthermore,  assume 
	\begin{equation} \label{eq:assumption-2}
	\left|F^{\prime \prime \prime}(x)\right|<C\left(|x|^{p^{\prime}}+1\right), \quad p^{\prime}>0 \text { arbitrary if } d=1,2 ; \quad 0<p^{\prime}<3 \text { if } d=3.
	\end{equation}
	Then for $\mathcal{G}=-\Delta$, there exists a unique solution for any $T>0$ such that 
	\begin{equation}
		\phi \in C\left([0, T] ; H^{2}(\Omega)\right) \cap L^{2}\left(0, T ; H^{4}(\Omega)\right).
	\end{equation}
\end{thm}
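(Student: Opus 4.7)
The plan is to use a standard Galerkin-in-time-and-space approach combined with a bootstrap argument in which the growth conditions on $F''$ and $F'''$ precisely match the Sobolev embeddings needed to control the nonlinear terms. I would build a Galerkin approximation $\phi_N$ in the basis of eigenfunctions of $\mathcal{L}$ with periodic or Neumann data, and derive a chain of a priori estimates uniform in $N$, then pass to the limit and verify uniqueness by an energy argument on the difference of two solutions. The two cases $\mathcal{G}=I$ and $\mathcal{G}=-\Delta$ follow the same template but the Cahn--Hilliard case requires one extra layer of regularity, which is where the stronger assumption on $F'''$ enters.

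For the Allen--Cahn case, first I would test \eqref{eq:model-problem} against $\mu=\mathcal{L}\phi+F'(\phi)$ to recover the energy dissipation identity, giving a uniform $L^\infty_t H^1_x$ bound on $\phi$ together with $\mu\in L^2_tL^2_x$ (using the lower bound on $F$ implicit from \eqref{eq:assumption-1}). Then I would test against $\phi_t$ and against $\mathcal{L}\phi_t$ (or equivalently differentiate in time and test against $\phi_t$) to upgrade to $\phi_t\in L^2_tL^2_x$ and $\phi\in L^\infty_tH^2_x$. The $L^2_tH^3_x$ bound comes from writing $\mathcal{L}\phi=\mu-F'(\phi)-\phi_t+\phi_t$ (after applying $\mathcal{L}$ to both sides of the elliptic equation for $\mu$) and controlling $\|F'(\phi)\|_{H^1}\lesssim \|F''(\phi)\nabla\phi\|_{L^2}$; the growth hypothesis \eqref{eq:assumption-1} combined with $H^2\hookrightarrow L^\infty$ in $d=1,2$ and $H^2\hookrightarrow L^{q}$ with $q<\infty$ plus Gagliardo--Nirenberg for $d=3$ (where the restriction $p<4$ is essential) closes this estimate.

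For the Cahn--Hilliard case with $\mathcal{G}=-\Delta$, I would mirror the Allen--Cahn estimates but applied to the fourth-order equation $\phi_t=\Delta(\mathcal{L}\phi+F'(\phi))$. The energy law gives $\nabla\mu\in L^2_tL^2_x$; testing against $-\Delta^{-1}\phi_t$ or differentiating the equation in time yields $\phi_t\in L^2_tH^{-1}_x$ and then $\phi\in L^\infty_tH^2_x$ via elliptic regularity applied to $\mathcal{L}\phi+F'(\phi)=\mu$. To reach $L^2_tH^4_x$ I would apply $\Delta$ to the elliptic identity and bound $\|F'(\phi)\|_{H^2}$ in terms of $\|F''(\phi)\Delta\phi\|_{L^2}+\|F'''(\phi)|\nabla\phi|^2\|_{L^2}$; the second term is the bottleneck, and controlling it through $\|\nabla\phi\|_{L^4}^2$ and the growth condition \eqref{eq:assumption-2} is exactly what forces $p'<3$ in three dimensions.

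The main obstacle, and the reason the growth exponents appear as they do, is the nonlinear-term estimate $\|F'''(\phi)|\nabla\phi|^2\|_{L^2}$ in the Cahn--Hilliard bootstrap: one must trade between the polynomial growth of $F'''$, the $L^{2p'/(p'-?)}$ integrability of $\phi$ coming from $H^2\hookrightarrow L^q$, and the $L^4$ norm of $\nabla\phi$ coming from $H^1\hookrightarrow L^6$ and interpolation with $H^2$. Matching exponents via H\"older and Gagliardo--Nirenberg in dimension three is what forces $p'<3$, while in dimension one and two the $L^\infty$ embedding of $H^2$ removes all restrictions. Uniqueness in both cases follows from a standard Gr\"onwall argument on the difference equation, using the local Lipschitz bound on $F'$ that is a consequence of the same growth hypotheses together with the already-established $L^\infty_tL^\infty_x$ (or $L^\infty_tL^q_x$) control; since this is essentially the argument already in \cite{shen2018convergence,temam2012infinite}, I would simply quote the relevant propositions rather than repeat the full derivation.
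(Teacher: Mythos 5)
The paper offers no proof of this theorem at all — it is quoted directly from \cite{shen2018convergence, temam2012infinite} — and your proposal likewise ends by deferring to those references, so the two "approaches" coincide. Your added sketch (Galerkin approximation, energy law, testing against $\phi_t$ and $\mathcal{L}\phi_t$, elliptic bootstrap for the higher norms, with the growth exponents $p<4$ and $p'<3$ entering through the $d=3$ Sobolev embeddings when estimating $F''(\phi)\nabla\phi$ and $F'''(\phi)|\nabla\phi|^2$) is a faithful outline of the standard argument carried out in the cited works.
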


We also recall the following useful results to deal with the nonlinear term \cite{shen2018convergence}.
\begin{thm}
	Assume that $\|\phi\|_{H^1} \leq M$.
	\begin{itemize}
		\item Assume that \eqref{eq:assumption-1} holds. Then for any $\phi \in H^3$, there exists $0 \leq \sigma<1$ and a constant $C(M)$ such that the following inequality holds
		\begin{equation}
			\|\nabla F^{\prime}(\phi)\|^{2} \leq C(M)\left(1+\|\nabla \Delta \phi\|^{2 \sigma}\right),
		\end{equation}
		\item Assume that \eqref{eq:assumption-1} and \eqref{eq:assumption-2} hold. Then, for any $\phi \in H^{4}$, there exists $0 \leq \sigma<1$ and a constant $C(M)$ such that the following inequality holds
		\begin{equation}
			\|\Delta F^{\prime}(\phi)\|^{2} \leq C(M)\left(1+\|\Delta^{2} \phi\|^{2 \sigma}\right).
		\end{equation}
	\end{itemize}
\end{thm}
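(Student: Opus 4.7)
The plan is to bound each norm pointwise via the chain rule, invoke the growth hypotheses on $F''$ and $F'''$, then peel off the low-order factor with H\"older's inequality and control the remaining factor by a Gagliardo--Nirenberg interpolation between the given $H^1$ bound and the top-order Sobolev norm appearing on the right-hand side ($H^3$ for the first inequality, $H^4$ for the second). The whole point is to verify that the resulting interpolation exponent $\sigma$ is strictly less than one, and this is where the upper bounds on $p$ and $p'$ in $d=3$ enter.

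For the first inequality I would start from $\nabla F'(\phi)=F''(\phi)\nabla \phi$ and use \eqref{eq:assumption-1} to reduce matters to controlling $\int_\Omega |\phi|^{2p}|\nabla\phi|^2\,d\mathbf{x}$ (the constant part of the growth bound contributes $C\|\nabla\phi\|^2\le CM^2$ immediately). I then H\"older-split with exponents $(\alpha,\beta)$, $1/\alpha+1/\beta=1$, chosen so that $L^{2p\alpha}$ matches the $H^1$ Sobolev endpoint---namely $2p\alpha=6$ when $d=3$---so that $\|\phi\|_{L^{2p\alpha}}^{2p}\le C(M)$. The remaining factor $\|\nabla\phi\|_{L^{6/(3-p)}}^2$ is controlled by Gagliardo--Nirenberg interpolation between $H^1$ and $H^3$; a short dimension count identifies the interpolation exponent as $\theta=p/4$, and since $p<4$ this gives $\sigma=p/4<1$. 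Combined with $\|\phi\|_{H^3}^{2\sigma}\le C(1+\|\nabla\Delta\phi\|^{2\sigma})$ this yields the first claim. For $d=1,2$ the restriction disappears, because $H^1$ embeds into every $L^q$ with $q<\infty$ (or into $L^\infty$ when $d=1$).

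For the second inequality I would expand $\Delta F'(\phi)=F''(\phi)\Delta\phi+F'''(\phi)|\nabla\phi|^2$ and apply both \eqref{eq:assumption-1} and \eqref{eq:assumption-2}, reducing to the two integrals $\int |\phi|^{2p}|\Delta\phi|^2\,d\mathbf{x}$ and $\int |\phi|^{2p'}|\nabla\phi|^4\,d\mathbf{x}$. Since $\phi\in H^4\hookrightarrow L^\infty$ in three dimensions, I would peel off $\|\phi\|_{L^\infty}^{2p}$ (respectively $\|\phi\|_{L^\infty}^{2p'}$) via H\"older with $\alpha=\infty$, and then use Gagliardo--Nirenberg between $H^1$ and $H^4$ to obtain $\|\phi\|_{L^\infty}\le C\|\phi\|_{H^1}^{5/6}\|\phi\|_{H^4}^{1/6}$, $\|\Delta\phi\|\le C\|\phi\|_{H^1}^{2/3}\|\phi\|_{H^4}^{1/3}$, and $\|\nabla\phi\|_{L^4}\le C\|\phi\|_{H^1}^{3/4}\|\phi\|_{H^4}^{1/4}$. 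Feeding in $\|\phi\|_{H^1}\le M$, the $H^4$-exponents collapse to $(p+2)/6$ in the first integral and $(p'+3)/6$ in the second, both strictly below $1$ precisely when $p<4$ and $p'<3$. Replacing $\|\phi\|_{H^4}^{2\sigma}$ by $C(1+\|\Delta^2\phi\|^{2\sigma})$ then delivers the second claim.

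The main (and essentially only) obstacle is the book-keeping of the H\"older/Gagliardo--Nirenberg exponents in the 3D critical case: the H\"older factors must match endpoints of the available Sobolev embeddings, and the resulting interpolation exponent has to be tracked symbolically to confirm $\sigma<1$. The hypotheses $p<4$ in \eqref{eq:assumption-1} and $p'<3$ in \eqref{eq:assumption-2} are exactly the thresholds at which this property fails for the present strategy, which also serves as a sanity check that these upper bounds are sharp for this line of argument.
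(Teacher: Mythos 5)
The paper does not actually prove this statement: it is recalled verbatim from \cite{shen2018convergence}, so there is no in-paper argument to compare against. Your strategy (chain rule, growth hypotheses on $F''$, $F'''$, H\"older, then Gagliardo--Nirenberg interpolation against the $H^1$ bound) is the standard one used in that reference, and your exponent book-keeping for the second bullet is correct: Agmon gives $\|\phi\|_{L^\infty}\le C\|\phi\|_{H^1}^{5/6}\|\phi\|_{H^4}^{1/6}$, and the totals $(p+2)/3$ and $(p'+3)/3$ on $\|\phi\|_{H^4}$ are each below $2$ exactly when $p<4$, $p'<3$. (Minor point: you obtain two different exponents there, so you should take $\sigma$ to be the larger of the two and absorb the other via $x^{a}\le 1+x^{b}$ for $a\le b$.)

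There is, however, a concrete gap in your first bullet for $d=3$. Your H\"older split requires $2p\alpha=6$, i.e.\ $\alpha=3/p$, which is an admissible H\"older exponent only when $p\le 3$; the hypothesis \eqref{eq:assumption-1} allows $p$ up to $4$, and for $p\in(3,4)$ your conjugate pair $(\alpha,\beta)$ does not exist. The conclusion is still correct and the fix is routine: either run the same $L^\infty$/Agmon argument you use in the second bullet, namely $\int|\phi|^{2p}|\nabla\phi|^2\le\|\phi\|_{L^\infty}^{2p}\|\nabla\phi\|^2\le CM^2\bigl(M^{3/4}\|\phi\|_{H^3}^{1/4}\bigr)^{2p}$, which gives $\sigma=p/4$ directly for all $p<4$; or keep a H\"older split with arbitrary admissible $(\alpha,\beta)$ and interpolate \emph{both} factors against $[H^1,H^3]$ --- by scaling the total $H^3$-exponent is split-independent and always equals $p/2$, so $\sigma=p/4$ regardless. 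As written, though, the step "choose $2p\alpha=6$" fails on part of the stated range of $p$, so you should not present it as covering the full hypothesis.
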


We consider first the Allen-Cahn type equation
\begin{equation} \label{eq:Allen-Cahn-eq}
\frac{\partial \phi}{\partial t}-\Delta \phi+\lambda \phi+F^{\prime}(\phi)=0 \quad(\boldsymbol{x}, t) \in \Omega \times(0, T],
\end{equation}
which is \eqref{eq:model-problem} with $\mathcal{L}=-\Delta+\lambda I$ and $\mathcal{G}=I$. It can also be written in the form of \eqref{eq: new1} with $\mathcal{A}=-\Delta+\lambda I$ and $g(\phi)=F^{\prime}(\phi)$.
The corresponding \eqref{eq: new2} is with $E_{tot}(\phi)=\int_\Omega \frac12|\nabla\phi|^2+\lambda |\phi|^2+F(\phi) dx$ and $\mathcal{K}(\phi)=\|-\Delta  \phi+\lambda  \phi+ F^{\prime}(\phi)\|^2$.


In the following, we follow a similar procedure as in  \cite{huang2021implicit} to carry out a unified error analysis for the R-GSAV/BDF$k$ ($1 \leq k \leq 5$) defined by \eqref{eq:R-IMEX-SAV-BDFk-1}-\eqref{eq:R-IMEX-SAV-BDFk-6}. 

\begin{thm}
	Given initial conditions $\bar{\phi}^{i}=\phi^{i}=\phi(t^i), r^{i}=E(\phi^{i}), i=0, 1, ..., k-1$. 
	Let $\bar{\phi}^{n+1}$ and $\phi^{n+1}$ be computed with the R-GSAV/BDF$k$ ($1 \leq k \leq 5$) scheme \eqref{eq:R-IMEX-SAV-BDFk-1}-\eqref{eq:R-IMEX-SAV-BDFk-6} for \eqref{eq:Allen-Cahn-eq} with 
	\begin{equation}
	\eta_{1}^{n+1}=1-\left(1-\xi^{n+1}\right)^{3}, \quad \eta_{k}^{n+1}=1-\left(1-\xi^{n+1}\right)^{k+1}(k=2,3,4,5).
	\end{equation}
	We assume \eqref{eq:assumption-1} holds and
	\begin{equation}
	\phi^{0} \in H^{3}, \quad \frac{\partial^{j} \phi}{\partial t^{j}} \in L^{2}\left(0, T ; H^{1}\right) 1 \leq j \leq k+1.
	\end{equation}
	Then for $\delta t < \min \lbrace \frac{1}{1+2C_0^{k+2}}, \frac{1-\tau_k}{3k} \rbrace$, we have 
	$$
	\left\|\bar{\phi}^{n}-\phi(\cdot, t^{n})\right\|_{H^{2}}, \quad \left\|\phi^{n}-\phi(\cdot, t^{n})\right\|_{H^{2}} \leq C \delta t^{k}, \quad\forall  n \leq T/\delta t,
	$$
	where $\tau_k$ is given in \eqref{tau_k}, and the constants $C_0, C$ are  independent of $\delta t$.
\end{thm}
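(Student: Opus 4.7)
The plan is to run a simultaneous induction on $n\ge 0$ that controls $\|\bar e^{n+1}\|_{H^2}$, $\|e^{n+1}\|_{H^2}$, and the SAV errors $|E(\Phi^{n+1})-\tilde R^{n+1}|$, $|E(\Phi^{n+1})-R^{n+1}|$, where $\Phi^n=\phi(\cdot,t^n)$ is the exact solution, $\bar e^{n+1}=\Phi^{n+1}-\bar\phi^{n+1}$, and $e^{n+1}=\Phi^{n+1}-\phi^{n+1}$. The starting point is the a priori bound $(\mathcal L\phi^n,\phi^n)\le M_k^2$ from Theorem~\ref{Th: stability} together with the solution regularity from Theorem~\ref{Th:solution-regularity}, which supply uniform $L^\infty(H^1)$ (and hence $L^\infty(L^p)$ via Sobolev) control of $\phi^n$ and $\Phi^n$; once the induction hypothesis provides $\|\bar e^n\|\lesssim\delta t^k$, the same control transfers to $\bar\phi^n$, making the polynomially growing nonlinearity in \eqref{eq:assumption-1} tractable.

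First I would subtract \eqref{eq:R-IMEX-SAV-BDFk-1} from the BDF$k$ identity satisfied by the exact solution to obtain
\begin{equation*}
\frac{\alpha_k \bar e^{n+1}-A_k(e^n)}{\delta t}+\mathcal A\bar e^{n+1}=-\bigl[g(B_k(\Phi^n))-g(B_k(\bar\phi^n))\bigr]+T_1^{n+1},
\end{equation*}
with truncation $\|T_1^{n+1}\|\lesssim\delta t^k$ from Taylor expansion. Because $\phi^{n-j}=\eta_k^{n-j}\bar\phi^{n-j}$, I would rewrite $A_k(e^n)=A_k(\bar e^n)+\sum_j c_j^{(k)}(1-\eta_k^{n-j})\bar\phi^{n-j}$; the second group will turn out to be of order $\delta t^{k(k+1)}$, hence absorbable. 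Testing against $\bar e^{n+1}-\tau_k\bar e^n$ and invoking Lemma~\ref{lem:stability-tool} converts the BDF$k$ term into a telescoping $G$-norm increment plus a nonnegative quadratic, while $\mathcal A=-\Delta+\lambda I$ supplies a coercive $H^1$ quantity; to reach $H^2$ I would repeat the procedure after applying $(-\Delta+\lambda)$ to the error equation. The nonlinear increment is handled by the mean value theorem, the growth condition \eqref{eq:assumption-1}, and the induction-inherited $H^1$ bound on $\bar\phi^n$. A discrete Gronwall argument then delivers $\|\bar e^{n+1}\|_{H^2}\le C\delta t^k$, where the smallness restriction $\delta t<\min\{1/(1+2C_0^{k+2}),(1-\tau_k)/(3k)\}$ is exactly what is needed to absorb the nonlinear contributions and the $(1-\eta_k^{n-j})$ perturbations into the positive BDF$k$ quadratic from Lemma~\ref{lem:stability-tool}.

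Second I would close the SAV and post-relaxation errors. Subtracting \eqref{eq:R-IMEX-SAV-BDFk-3} from the BDF1-consistent version of $dE(\Phi)/dt=-\mathcal K(\Phi)$ and using the bound on $\bar e^{n+1}$ just established (which forces $|E(\bar\phi^{n+1})-E(\Phi^{n+1})|$ and $|\mathcal K(\bar\phi^{n+1})-\mathcal K(\Phi^{n+1})|$ to be $O(\delta t^k)$) yields a discrete Gronwall inequality for $|E(\Phi^{n+1})-\tilde R^{n+1}|$, hence $|1-\xi^{n+1}|\le C\delta t^k$. Consequently $|1-\eta_k^{n+1}|=|1-\xi^{n+1}|^{k+1}\le C\delta t^{k(k+1)}$, so $\|\phi^{n+1}-\bar\phi^{n+1}\|_{H^2}$ is of much higher order than $\delta t^k$ and $\|e^{n+1}\|_{H^2}\le C\delta t^k$ follows. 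The relaxed update \eqref{eq:R-IMEX-SAV-BDFk-6} is a convex combination of $\tilde R^{n+1}$ and $E(\phi^{n+1})$, both of which lie within $O(\delta t^k)$ of $E(\Phi^{n+1})$ by the previous steps and by \eqref{eq:stability2}, so $|E(\Phi^{n+1})-R^{n+1}|\le C\delta t^k$, which is precisely what the next induction step consumes.

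The main obstacle I anticipate is the interaction of the relaxation with the Gronwall closure: in Case~4 of Theorem~\ref{Th: stability} the update $R^{n+1}$ departs from $\tilde R^{n+1}$, so the SAV error at the previous level enters the $\tilde R$-error recursion through a $\zeta_0$-dependent coefficient whose size is tied to the relaxation set $\mathcal V$. Showing that this coefficient is harmless requires using $R^{n+1}\le E(\phi^{n+1})$ from \eqref{eq:stability2} to bound both endpoints of the convex combination by the single quantity $|E(\Phi^{n+1})-E(\phi^{n+1})|$, which is already controlled by the $H^2$ solution error. A secondary technical point is that the stability theorem bounds $\phi^n$ but not directly $\bar\phi^n$; this gap must be bridged inside the induction via $\bar\phi^n=\phi^n/\eta_k^n + (\text{small})$ together with the $|1-\eta_k^n|=O(\delta t^{k(k+1)})$ estimate established at the prior step.
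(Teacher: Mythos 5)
Your overall architecture (simultaneous induction, subtracting the scheme from the exact BDF$k$ identity, Nevanlinna--Odeh multiplier from Lemma \ref{lem:stability-tool}, discrete Gronwall, and handling the relaxation through the convex-combination identity together with $R^{n+1}\le E(\phi^{n+1})$) parallels the paper's proof, but there is a genuine quantitative gap in your second step. You claim that the $O(\delta t^k)$ bound on $\|\bar e^{n+1}\|_{H^2}$ forces $|E(\phi(t^{n+1}))-\tilde R^{n+1}|\le C\delta t^{k}$ and hence $|1-\xi^{n+1}|\le C\delta t^{k}$, $|1-\eta_k^{n+1}|\le C\delta t^{k(k+1)}$. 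This cannot be proved for $k\ge 2$: the auxiliary update \eqref{eq:R-IMEX-SAV-BDFk-3} is only a first-order (backward-Euler type) discretization of $\frac{d}{dt}E(\phi)=-\frac{R}{E(\phi)}\mathcal K(\phi)$, so its local truncation error $T_1^n=\int_{t^n}^{t^{n+1}}(s-t^n)R_{tt}\,ds=O(\delta t^{2})$ enters the recursion for $\tilde s^{n+1}=\tilde R^{n+1}-R(t^{n+1})$ no matter how accurate $\bar\phi^{n+1}$ is, and after summation it produces only a first-order bound; the relaxation reset $R^{n}=E(\phi^{n})$ cannot rescue this a priori because whether $\zeta_0=0$ occurs is data-dependent (Case 4 of Theorem \ref{Th: stability} reintroduces $\tilde s^{n}$). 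Consequently an induction hypothesis that carries $|E(\phi(t^{n}))-\tilde R^{n}|=O(\delta t^{k})$ cannot be closed, and your estimate $\|\phi^{n+1}-\bar\phi^{n+1}\|_{H^2}=O(\delta t^{k(k+1)})$ is unsubstantiated.

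The paper's induction instead carries only the first-order statement $|1-\xi^{q}|\le C_0\delta t$, obtained by writing $s^{n}=\zeta_0\tilde s^{n}+(1-\zeta_0)\bigl[E(\phi^{n})-E(\phi(t^{n}))\bigr]$, summing the resulting recursion and applying the discrete Gronwall lemma, which yields $|\tilde s^{m+1}|\le C_3\delta t(\cdots\delta t^{k-1}+1)$. The needed smallness of the correction factor then comes entirely from the design $\eta_k^{n+1}=1-(1-\xi^{n+1})^{k+1}$ (and the cube for $k=1$), which boosts the first-order accuracy of $\xi^{n+1}$ to $|1-\eta_k^{n+1}|\lesssim\delta t^{k+1}$, exactly enough for $\|\phi^{n+1}-\bar\phi^{n+1}\|_{H^2}\le C\delta t^{k}$. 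Your proposal never uses, or explains the need for, this exponent $k+1$ (indeed under your claimed order even $\eta=\xi$ would suffice for $k\ge2$), which signals the misconception. If you weaken your SAV induction hypothesis to first order and invoke the power structure of $\eta_k^{n+1}$, your outline essentially collapses onto the paper's argument; as written, the key step fails.
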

\begin{proof}

	We denote $t^n=n \delta t, \bar{e}^{n}=\bar{\phi}^{n}-\phi(\cdot, t^{n}), e^{n}=\phi^{n}-\phi(\cdot, t^{n}), \tilde{s}^{n}=\tilde{R}^{n}-R(t^n), s^{n}=R^{n}-R(t^n).$

	We will prove by introduction
	\begin{equation}\label{eq:xi-results}
	\left|1-\xi^{q}\right| \leq C_{0} \delta t, \quad \forall q \leq T / \delta t
	\end{equation}
	 where $C_0$ is dependent on $T, \Omega$ and the exact solution $\phi$ but is independent of $\delta t$. 
	Under the assumption, \eqref{eq:xi-results} holds for $q=0$. 
	Assuming
	\begin{equation} \label{eq:xi-assumption}
	\left|1-\xi^{q}\right| \leq C_{0} \delta t, \quad \forall q \leq m,
	\end{equation}
	we need to prove 
	\begin{equation}\label{eq:error-estimate-8}
	\left|1-\xi^{m+1}\right| \leq C_{0} \delta t.
	\end{equation}
	The   \textbf{Step 1} and \textbf{Step 2} below are essentially the same as in \cite{huang2021implicit}. 
	So we only list the results that will be used  here and refer to \cite{huang2021implicit} for more details.
	\medskip
	
\noindent	\textbf{Step 1: $H^2$ bound for $\phi^{n}$ and $\bar{\phi}^{n}$ for all $n \leq m$.} 
			The first step of the scheme  \eqref{eq:R-IMEX-SAV-BDFk-1}-\eqref{eq:R-IMEX-SAV-BDFk-3} for  \eqref{eq:Allen-Cahn-eq} is
	\begin{equation}\label{eq:Allen-Cahn-BDFk-version-1}
		\frac{\alpha_{k} \bar{\phi}^{n+1}-A_{k}\left(\phi^{n}\right)}{\delta t}=\Delta \bar{\phi}^{n+1}-\lambda \bar{\phi}^{n+1}+F^{\prime}\left[B_{k}\left(\bar{\phi}^{n}\right)\right],
	\end{equation}
	Multiplying the above by $\eta_{k}^{n+1}$, we  obtain 
	\begin{equation} \label{eq:Allen-Cahn-BDFk-version-2}
		\frac{\alpha_{k} \phi^{n+1}-\eta_{k}^{n+1} A_{k}\left(\phi^{n}\right)}{\delta t}=\Delta \phi^{n+1}-\lambda \phi^{n+1}+\eta_{k}^{n+1} F^{\prime}\left[B_{k}\left(\bar{\phi}^{n}\right)\right].
	\end{equation}
	Under the assumption \eqref{eq:xi-assumption}, it can be shown that for  $\delta t$ small enough such that 
	\begin{equation}\label{eq:error-estimate-9}
	\delta t \leq \min \left\{\frac{1}{2 C_{0}^{k+1}}, 1\right\},
	\end{equation}
	we have 
	\begin{equation}\label{eq:eta-results}
	1-\frac{\delta t^{k}}{2} \leq\left|\eta_{k}^{q}\right| \leq 1+\frac{\delta t^{k}}{2}, \quad \left|1-\eta_{k}^{q}\right| \leq \frac{\delta t^{k}}{2}, \quad \forall q \leq m.
	\end{equation}
	Taking the inner product of \eqref{eq:Allen-Cahn-BDFk-version-2} with $\Delta^{2} \phi^{q}-\tau_{k} \Delta^{2} \phi^{q-1}$ and using Theorem \ref{Th: stability}, Lemma \ref{lem:stability-tool} and the property of symmetic positive definite matrix  $G=(g_{ij})$, we can obtain
	\begin{equation}\label{init_bound}
		\|\phi^{n}\|_{H^2} \leq C_1, \quad \forall \delta t < 1, n \leq m.
	\end{equation}
	We derive from the above and \eqref{eq:eta-results} that
	\begin{equation}\label{eq:error-estimate-11}
		\left\|\bar{\phi}^{n}\right\|_{H^{2}} \leq 2 C_{1}, \quad \forall \delta t<1, n \leq m.
	\end{equation}
	
	\medskip

\noindent	\textbf{Step 2: estimate for $\|\bar{e}^{n+1}\|_{H^2}$ for all $n \leq m$.} 
	Subtrating \eqref{eq:Allen-Cahn-BDFk-version-1} from \eqref{eq:Allen-Cahn-eq} , and using \eqref{init_bound} and \eqref{eq:error-estimate-11}, we can derive that for $\delta t < \frac{1}{C_2}$, we have
	\begin{equation}\label{eq:error-estimate-6}
	\left\|\bar{e}^{n+1}\right\|_{H^{2}} \leq \sqrt{C_{2}\left(1+C_{0}^{2 k+2}\right)} \delta t^{k}, \quad \forall 0 \leq n \leq m,
	\end{equation}
	\begin{equation}\label{eq:error-estimate-10}
	\left\|\bar{\phi}^{n+1}\right\|_{H^{2}} \leq \bar{C}, \quad \forall 0 \leq n \leq m,
	\end{equation}
	and
	\begin{equation}
	\left\|F^{\prime}\left(\bar{\phi}^{n+1}\right)\right\|,\left\|F^{\prime \prime}\left(\bar{\phi}^{n+1}\right)\right\| \leq \bar{C} \quad \forall 0 \leq n \leq m.
	\end{equation}
\medskip
\noindent	\textbf{Step 3: estimate for $1-\xi^{m+1}$.}
	By direct calculation,
	\begin{equation} \label{eq:error-estimate-5}
	R_{t t}=\int_{\Omega}\left(\left|\nabla \phi_{t}\right|^{2}+\nabla \phi \cdot \nabla \phi_{t t}+\lambda \phi_{t}^{2}+\lambda \phi \phi_{t t}+F^{\prime \prime}(\phi) \phi_{t}^{2}+F^{\prime}(\phi) \phi_{t t}\right) \mathrm{d} \mathbf{x}.
	\end{equation} 
	It follows from \eqref{eq:R-IMEX-SAV-BDFk-3} that the equation  for the errors can be written as 
	\begin{equation} \label{eq:error-estimate-1}
	\tilde{s}^{n+1}-s^{n}=\delta t\left(\left\|h\left[\phi\left(t^{n+1}\right)\right]\right\|^{2}-\frac{\tilde{R}^{n+1}}{E\left(\bar{\phi}^{n+1}\right)}\left\|h\left(\bar{\phi}^{n+1}\right)\right\|^{2}\right)+T_{1}^{n},
	\end{equation}
	where $h(\phi)=\mu=-\Delta \phi+\lambda \phi+F^{\prime}(\phi)$, and 
	\begin{equation}
	T_{1}^{n}=R\left(t^{n}\right)-R\left(t^{n+1}\right)+\delta t R_{t}\left(t^{n+1}\right)=\int_{t n}^{t^{n+1}}\left(s-t^{n}\right) R_{t t}(s) \mathrm{d} s.
	\end{equation}
	Since $R^{n}=\zeta_0 \tilde{R}^{n} + \left(1-\zeta_0\right)E\left(\phi^{n}\right)$ and $R\left(t^{n}\right)=\zeta_0 R\left(t^{n}\right) + \left(1-\zeta_0\right)E\left(\phi\left(t^{n}\right)\right)$, we obtain
	\begin{equation} \label{eq:error-estimate-2}
		s^{n} = \zeta_0 \tilde{s}^{n} + \left(1-\zeta_0\right)\left[E\left(\phi^{n}\right)-E\left(\phi\left(t^n\right)\right)\right].
	\end{equation}
	Plugging \eqref{eq:error-estimate-2} into \eqref{eq:error-estimate-1}, we obtain
	\begin{equation}
		\tilde{s}^{n+1}-\zeta_0 \tilde{s}^{n} - \left(1-\zeta_0\right)\left[E\left(\phi^{n}\right)-E\left(\phi\left(t^n\right)\right)\right]=\delta t\left(\left\|h\left[\phi\left(t^{n+1}\right)\right]\right\|^{2}-\frac{\tilde{R}^{n+1}}{E\left(\bar{\phi}^{n+1}\right)}\left\|h\left(\bar{\phi}^{n+1}\right)\right\|^{2}\right)+T_{1}^{n},
	\end{equation}
	by using the triangle inequality principle, we derive 
	\begin{equation}\label{eq:error-estimate-3}
	\begin{aligned}
	&	\left|\tilde{s}^{n+1}\right|-\zeta_0 \left|\tilde{s}^{n}\right| 
		 \leq \left|\tilde{s}^{n+1}-\zeta_0 \tilde{s}^{n}\right| \\
		& \leq \delta t\left|\left\|h\left[\phi\left(t^{n+1}\right)\right]\right\|^{2}-\frac{\tilde{R}^{n+1}}{E\left(\bar{\phi}^{n+1}\right)}\left\|h\left(\bar{\phi}^{n+1}\right)\right\|^{2}\right| + \left(1-\zeta_0\right)\left|E\left(\phi^{n}\right)-E\left(\phi\left(t^n\right)\right)\right|+\left|T_{1}^{n}\right|.
	\end{aligned}
	\end{equation}
	Taking the sum of \eqref{eq:error-estimate-3} for $n$ from $0$ to $m$, and noting that $\tilde{s}^0=0$, we have
	\begin{equation}\label{eq:error-estimate-4}
	\begin{aligned}
		& \left|\tilde{s}^{m+1}\right| + \sum_{n=1}^{m}\left(1-\zeta_0\right)\left|\tilde{s}^{n}\right|   \leq \delta t \sum_{n=0}^{m}\left|\left\|h\left[\phi\left(t^{n+1}\right)\right]\right\|^{2}-\frac{\tilde{R}^{n+1}}{E\left(\bar{\phi}^{n+1}\right)}\left\|h\left(\bar{\phi}^{n+1}\right)\right\|^{2}\right| \\
		& + \sum_{n=0}^{m} \left(1-\zeta_0\right)\left|E\left(\phi^{n}\right)-E\left(\phi\left(t^n\right)\right)\right|+ \sum_{n=0}^{m} \left|T_{1}^{n}\right|.
	\end{aligned}
	\end{equation}
Similarly  to the analysis of GSAV approach in \cite{huang2021implicit}, we can bound the right hand terms of \eqref{eq:error-estimate-4} as follows.
	First, thanks to \eqref{eq:error-estimate-5}, we have
	\begin{equation}
	\left|T_{1}^{n}\right| \leq C \delta t \int_{t^{n}}^{t^{n+1}}\left|R_{t t}\right| d s \leq C \delta t \int_{t^{n}}^{t^{n+1}}\left(\left\|\phi_{t}(s)\right\|_{H^{1}}^{2}+\left\|\phi_{t t}(s)\right\|_{H^{1}}\right) \mathrm{d} s.
	\end{equation} 
	Next, by \eqref{eq:R-IMEX-SAV-BDFk-3} and \eqref{eq:stability}, we have $\tilde{R}^{n+1}<C$ and
	\begin{equation}
	\begin{array}{l}
	\left|\left\|h\left[\phi\left(t^{n+1}\right)\right]\right\|^{2}-\frac{\tilde{R}^{n+1}}{E\left(\bar{\phi}^{n+1}\right)}\left\|h\left(\bar{\phi}^{n+1}\right)\right\|^{2}\right| \\ 
	\leq\left\|h\left[\phi\left(t^{n+1}\right)\right]\right\|^{2}\left|1-\frac{\tilde{R}^{n+1}}{E\left(\bar{\phi}^{n+1}\right)}\right|+\frac{\tilde{R}^{n+1}}{E\left(\bar{\phi}^{n+1}\right)}\left|\left\|h\left[\phi\left(t^{n+1}\right)\right]\right\|^{2}-\left\|h\left(\bar{\phi}^{n+1}\right)\right\|^{2}\right| \\ 
	:=P_{1}^{n}+P_{2}^{n}.
	\end{array}
	\end{equation}	
	By Theorem \ref{Th:solution-regularity}, we have $\left\|h\left[\phi\left(t^{n+1}\right)\right]\right\|^{2}<C$, and by $E(u)>\underline{C}>0$, we find
	\begin{equation}\label{eq:error-estimate-12}
	\begin{aligned} P_{1}^{n} 
	& \leq C\left|1-\frac{\tilde{R}^{n+1}}{E\left(\bar{\phi}^{n+1}\right)}\right| \\ 
	& \leq C\left|\frac{R\left(t^{n+1}\right)}{E\left[\phi\left(t^{n+1}\right)\right]}-\frac{\tilde{R}^{n+1}}{E\left[\phi\left(t^{n+1}\right)\right]}\right|+C\left|\frac{\tilde{R}^{n+1}}{E\left[\phi\left(t^{n+1}\right)\right]}-\frac{\tilde{R}^{n+1}}{E\left(\bar{\phi}^{n+1}\right)}\right| \\ 
	& \leq C\left(\left|E\left[\phi\left(t^{n+1}\right)\right]-E\left(\bar{\phi}^{n+1}\right)\right|+\left|\tilde{s}^{n+1}\right|\right). 
	\end{aligned}
	\end{equation}
	and 
	\begin{equation}\label{eq:error-estimate-13}
	\begin{aligned}
	\left|E\left[\phi\left(t^{n+1}\right)\right]-E\left(\bar{\phi}^{n+1}\right)\right| \leq C \bar{C}\left(\left\|\nabla \bar{e}^{n+1}\right\|+\left\|\bar{e}^{n+1}\right\|\right). 
	\end{aligned}
	\end{equation} 
		On the other hand,  
	\begin{equation}
	\begin{aligned} 
	P_{2}^{n} \leq C \bar{C}\left(\left\|\Delta \bar{e}^{n+1}\right\|+\left\|\bar{e}^{n+1}\right\|\right),
	\end{aligned}
	\end{equation}
and
	\begin{equation}
		\left|E\left(\phi^{n}\right)-E\left(\phi\left(t^{n}\right)\right)\right| \leq \left|E\left(\phi^{n}\right)-E\left(\bar{\phi}^{n}\right)\right| + \left|E\left(\bar{\phi}^{n}\right)-E\left(\phi\left(t^{n}\right)\right)\right|.
	\end{equation}
Thanks to \eqref{eq:error-estimate-8}, \eqref{eq:error-estimate-9}, Theorem \ref{Th: stability} and \eqref{eq:error-estimate-11},  we have
	\begin{equation}
	\begin{aligned}
		\left|E\left(\phi^{n}\right)-E\left(\bar{\phi}^{n}\right)\right|
		& \leq \frac{1}{2}\left(\left\|\nabla \phi^{n}\right\|+\left\|\nabla \bar{\phi}^{n}\right\|\right)\left\|\nabla \phi^{n}-\nabla \bar{\phi}^{n}\right\| \\ 
		& +\frac{\lambda}{2}\left(\left\|\phi^{n}\right\|+\left\|\bar{\phi}^{n}\right\|\right)\left\|\phi^{n}-\bar{\phi}^{n}\right\| +\int_{\Omega} F\left(\phi^{n}\right) - F\left(\bar{\phi}^{n}\right) \mathrm{d} \mathbf{x}\\ 
		& \leq C\left(M_{k}+C_1\right)\left( \left\|\nabla \phi^{n}-\nabla \bar{\phi}^{n}\right\| + \left\|\phi^{n}-\bar{\phi}^{n}\right\|\right) \\
		& \leq C\left(M_{k}+C_1\right) \left|1-\eta_{k}^{n}\right|\left\|\bar{\phi}^{n}\right\|_{H^{1}} \\
		& \leq C\bar{C}\left(M_{k}+C_1\right)C_0^{k+1}\delta t^{k+1}, 
	\end{aligned}
	\end{equation}
	and 
	\begin{equation}\label{eq:error-estimate-7}
	\left|E\left(\bar{\phi}^{n}\right)-E\left(\phi\left(t^{n}\right)\right)\right|
	\leq C \bar{C}\left(\left\|\nabla \bar{e}^{n}\right\|+\left\|\bar{e}^{n}\right\|\right).
	\end{equation}
	Now, combing \eqref{eq:error-estimate-6}, \eqref{eq:error-estimate-4}-\eqref{eq:error-estimate-7}, we obtain 
	\begin{equation}
	\begin{aligned}
	\left|\tilde{s}^{m+1}\right| 
	& \leq C \delta t \sum_{n=0}^{m}\left|\tilde{s}^{n+1}\right|+C \bar{C} \delta t \sum_{n=0}^{m}\left\|\bar{e}^{n+1}\right\|_{H^{2}} +  C\bar{C}\left(M_{k}+C_1\right)C_0^{k+1}\delta t^{k}\\
	& + C \delta t \int_{0}^{T}\left(\left\|\phi_{t}(s)\right\|_{H^{1}}^{2}+\left\|\phi_{t t}(s)\right\|_{H^{1}}\right) \mathrm{d} s \\ 
	& \leq C \delta t \sum_{n=0}^{m}\left|\tilde{s}^{n+1}\right|+C \bar{C} \left(\sqrt{C_{2}\left(1+C_{0}^{2 k+2}\right)} +  \left(M_{k}+C_1\right)C_0^{k+1}\right)\delta t^{k} +C \delta t. 
	\end{aligned}
	\end{equation}
	Applying the discrete Gronwall lemma to the above inequality with $\delta t<\frac{1}{2C}$, we derive
	\begin{equation}\label{eq:error-estimate-14}
	\begin{aligned}
	\left|\tilde{s}^{m+1}\right| 
	&\leq C \exp \left((1-C \delta t)^{-1}\right) \delta t\left(\bar{C} \left(\sqrt{C_{2}\left(1+C_{0}^{2 k+2}\right)} + \left(M_{k}+C_1\right)C_0^{k+1} \right) \delta t^{k-1}+1\right) \\ 
	& \leq C_{3} \delta t\left(\bar{C} \left(\sqrt{C_{2}\left(1+C_{0}^{2 k+2}\right)} + \left(M_{k}+C_1\right)C_0^{k+1} \right) \delta t^{k-1}+1\right), 
	\end{aligned}
	\end{equation}
	where $C_3$ is independent of $C_0$ and $\delta t$, can be defined as 
	\begin{equation}
		C_{3}:=C \exp(2),
	\end{equation}
	then $\delta t<\frac{1}{2C}$ can be guaranteed by 
	\begin{equation}
		\delta t<\frac{1}{C_3}.
	\end{equation}
	Hence, using \eqref{eq:error-estimate-12}, \eqref{eq:error-estimate-13}, \eqref{eq:error-estimate-14} and \eqref{eq:error-estimate-6}, we have
	\begin{equation*}
	\begin{aligned}
	\left|1-\xi^{m+1}\right| 
	& \leq C\left(\left|E\left[\phi\left(t^{n+1}\right)\right]-E\left(\bar{\phi}^{n+1}\right)\right|+\left|\tilde{s}^{n+1}\right|\right) \\ 
	& \leq C\left(\bar{C}\left\|\bar{e}^{m+1}\right\|_{H^{1}}+\left|\tilde{s}^{m+1}\right|\right) \\ 
	& \leq C \delta t\left(\bar{C} \sqrt{C_{2}\left(1+C_{0}^{2 k+2}\right)} \delta t^{k-1}+C_{3} \left(\bar{C} \left(\sqrt{C_{2}\left(1+C_{0}^{2 k+2}\right)} + \left(M_{k}+C_1\right)C_0^{k+1} \right) \delta t^{k-1}+1\right)\right) \\ 
	& \leq C_{4} \delta t\left(\sqrt{1+C_{0}^{2 k+2}} \delta t^{k-1}+1\right), 
	\end{aligned}
	\end{equation*}
	where $C_4$ is independent of $C_0$ and $\delta t$. 
	
	For the cases $k \geq 2$, we can define $C_0$ exactly the same as \cite{huang2021implicit} with the condition $\delta t \leq \frac{1}{1+C_0^{k+1}}$ to obtain $\left|1-\xi^{m+1}\right|<C_0 \delta t$. 
		For the case $k=1$, we set $\eta_{1}^{n+1}=1-\left(1-\xi^{n+1}\right)^3$, we can repeat the same process as the cases $k \geq 2$ and arrive at a similar result.
	
	Finally, we can show that
	\begin{equation*}
	\begin{aligned}
	\left\|e^{m+1}\right\|_{H^{2}}^{2} 
	& \leq 2\left\|\bar{e}^{m+1}\right\|_{H^{2}}^{2}+2\left\|\phi^{m+1}-\bar{\phi}^{m+1}\right\|_{H^{2}}^{2} \\ 
	& \leq 2\left\|\bar{e}^{m+1}\right\|_{H^{2}}^{2} + 2 \left|\eta_{k}^{m+1}-1\right|^{2}\left\|\bar{\phi}^{m+1}\right\|^{2}_{H^{2}} \\
	& \leq 2 C_{2}\left(1+C_{0}^{2(k+1)}\right) \delta t^{2 k}+2 \bar{C}^{2} C_{0}^{2(k+1)} \delta t^{2(k+1)}  \leq C \delta t^{2k},
	\end{aligned}
	\end{equation*}
	provided that  $\delta t<\min\left\lbrace \frac{1}{1+2C_0^{k+2}}, \frac{1-\tau_k}{3k} \right\rbrace$. 
	The proof is complete.	
\end{proof}

Similar results can also be established for the  Cahn-Hilliard type equation
\begin{equation}\label{eq:Cahn-Hilliard-eq}
\frac{\partial \phi}{\partial t}=\Delta (-\Delta  \phi+\lambda  \phi+ F^{\prime}(\phi)) \quad(\boldsymbol{x}, t) \in \Omega \times(0, T],
\end{equation}
which is  \eqref{eq:model-problem} with $\mathcal{L}=-\Delta+\lambda I$ and with $\mathcal{G}=-\Delta$, with initial condition $\phi(x, 0)=\phi^0(x)$ 
and periodic or Neumann boundary condition. It can also be written in the form of \eqref{eq: new1} with $\mathcal{A}=-\Delta(-\Delta+\lambda I)$ and $g(\phi)=-\Delta(F^{\prime}(\phi))$.
The corresponding \eqref{eq: new2} is with $E_{tot}(\phi)=\int_\Omega \frac12|\nabla\phi|^2+\lambda |\phi|^2+F(\phi) dx$ and $\mathcal{K}(\phi)=\|\nabla (-\Delta  \phi+\lambda  \phi+ F^{\prime}(\phi))\|^2$.

\begin{thm}
	Given initial condition $\bar{\phi}^{i}=\phi^{i}=\phi(t^i), r^{i}=E(\phi^{i}), i=0, 1, ..., k-1$. 
	Let $\bar{\phi}^{n+1}$ and $\phi^{n+1}$ be computed with the R-GSAV/BDF$k$ ($1 \leq k \leq 5$) scheme \eqref{eq:R-IMEX-SAV-BDFk-1}-\eqref{eq:R-IMEX-SAV-BDFk-6} for \eqref{eq:Cahn-Hilliard-eq} with 
	\begin{equation}
	\eta_{1}^{n+1}=1-\left(1-\xi^{n+1}\right)^{3}, \quad \eta_{k}^{n+1}=1-\left(1-\xi^{n+1}\right)^{k+1}(k=2,3,4,5).
	\end{equation}
	We assume \eqref{eq:assumption-1} and \eqref{eq:assumption-2} holds and
	\begin{equation}
	\phi \in C\left([0, T] ; H^{3}\right), \quad \frac{\partial^{j} \phi}{\partial t^{j}} \in L^{2}\left(0, T ; H^{2}\right) 1 \leq j \leq k, \quad \frac{\partial^{k+1} \phi}{\partial t^{k+1}} \in L^{2}\left(0, T ; H^{1}\right).
	\end{equation}	
	Then for  $\delta t < \min \lbrace \frac{1}{1+4C_0^{k+2}}, \frac{1-\tau_k}{3k} \rbrace$, we have 
	$$
\left\|\bar{\phi}^{n}-\phi(\cdot, t^{n})\right\|_{H^{2}}, \quad \left\|\phi^{n}-\phi(\cdot, t^{n})\right\|_{H^{2}} \leq C \delta t^{k}, \quad\forall  n \leq T/\delta t,
$$
where  $\tau_k$ is given in \eqref{tau_k}, and the constants  $C_0,\, C$ are  independent of $\delta t$.
\end{thm}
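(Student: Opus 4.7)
The plan is to mirror the three-step induction argument used for the Allen--Cahn case, adapting each step to the weaker dissipation produced by $\mathcal{G}=-\Delta$ and the higher regularity afforded by Theorem \ref{Th:solution-regularity}. As before, I would proceed by induction on $m$: assuming $|1-\xi^q|\le C_0\delta t$ for all $q\le m$, I show the same bound at level $m+1$ and then extract the $H^2$ error estimates as a byproduct.

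\textbf{Step 1 (a priori $H^2$-bound).} Write the first substep of the scheme applied to \eqref{eq:Cahn-Hilliard-eq}:
\begin{equation*}
\frac{\alpha_k\bar{\phi}^{n+1}-A_k(\phi^n)}{\delta t}=\Delta(-\Delta\bar{\phi}^{n+1}+\lambda\bar{\phi}^{n+1})-\Delta F'[B_k(\bar{\phi}^n)],
\end{equation*}
and multiply by $\eta_k^{n+1}$ to obtain the corresponding identity for $\phi^{n+1}$. Under the induction hypothesis, \eqref{eq:eta-results} still holds, so $\eta_k^{n+1}$ is close to $1$. Because the dissipation is of $H^{-1}$ type, the natural test function at this stage is $\Delta^2\phi^q-\tau_k\Delta^2\phi^{q-1}$ (one order higher than in the Allen--Cahn case); combining Lemma \ref{lem:stability-tool}, Theorem \ref{Th: stability} and the positive-definiteness of $G=(g_{ij})$ yields $\|\phi^n\|_{H^2}\le C_1$ and hence $\|\bar{\phi}^n\|_{H^2}\le 2C_1$ for all $n\le m$.

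\textbf{Step 2 (truncation error for $\bar{e}^{n+1}$).} Subtract the scheme from the exact equation \eqref{eq:Cahn-Hilliard-eq}, and note that the consistency error now contains one more negative derivative. To recover $H^2$-control I would test the error equation with $\Delta^2\bar{e}^{n+1}-\tau_k\Delta^2\bar{e}^n$, use Lemma \ref{lem:stability-tool}, and handle the nonlinear increment $F'[B_k(\bar{\phi}^n)]-F'[\phi(t^{n+1})]$ via the mean value theorem together with the bound on $\|F''\|$, $\|F'''\|$ guaranteed by \eqref{eq:assumption-1}--\eqref{eq:assumption-2} and the $H^2$-bound obtained in Step 1. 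A discrete Gronwall argument then gives
\begin{equation*}
\|\bar{e}^{n+1}\|_{H^2}\le \sqrt{C_2(1+C_0^{2k+2})}\,\delta t^k,\qquad 0\le n\le m,
\end{equation*}
and in particular uniform $H^2$-bounds on $\bar{\phi}^{n+1}$, $F'(\bar{\phi}^{n+1})$, $F''(\bar{\phi}^{n+1})$.

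\textbf{Step 3 (estimate for $1-\xi^{m+1}$).} Expanding $R_{tt}$ for the Cahn--Hilliard free energy and using the error equation \eqref{eq:R-IMEX-SAV-BDFk-3} in the form
\begin{equation*}
\tilde{s}^{n+1}-s^n=\delta t\bigl(\|\nabla h[\phi(t^{n+1})]\|^2-\tfrac{\tilde{R}^{n+1}}{E(\bar{\phi}^{n+1})}\|\nabla h(\bar{\phi}^{n+1})\|^2\bigr)+T_1^n,
\end{equation*}
with $h(\phi)=-\Delta\phi+\lambda\phi+F'(\phi)$, plug in the relaxation identity \eqref{eq:error-estimate-2} to eliminate $s^n$, take absolute values, and sum from $0$ to $m$. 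The key point, exactly as in the Allen--Cahn argument, is to control the right-hand side by $|\tilde{s}^{j}|$ plus $\|\bar{e}^{j}\|_{H^2}$ plus a truncation term; now the $P_2^n$ type term involves $\|\nabla h[\phi(t^{n+1})]\|^2-\|\nabla h(\bar{\phi}^{n+1})\|^2$, whose control requires the $H^2$-error of $\bar{e}^{n+1}$ and the bound $\|\Delta F'(\bar{\phi}^{n+1})\|\le C$ coming from assumption \eqref{eq:assumption-2}. Applying the discrete Gronwall inequality and inserting the Step 2 bound then yields $|1-\xi^{m+1}|\le C_4\delta t(\sqrt{1+C_0^{2k+2}}\delta t^{k-1}+1)$, and with the tightened smallness condition $\delta t<\tfrac{1}{1+4C_0^{k+2}}$ one closes the induction. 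The final $H^2$-bound for $e^{m+1}=\phi^{m+1}-\phi(t^{m+1})$ follows from $\phi^{m+1}=\eta_k^{m+1}\bar{\phi}^{m+1}$ and $|\eta_k^{m+1}-1|\le C_0^{k+1}\delta t^{k+1}$ exactly as in the Allen--Cahn proof.

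The main obstacle is bookkeeping the extra derivative inherent to the $H^{-1}$-gradient flow: every place where the Allen--Cahn argument used $\|h(\bar{\phi}^{n+1})\|$ must now become $\|\nabla h(\bar{\phi}^{n+1})\|$, which forces the use of the $F'''$-growth condition \eqref{eq:assumption-2} when expanding $\nabla F'(\phi)-\nabla F'(\bar{\phi})$, and also dictates the stronger regularity hypothesis $\phi\in C([0,T];H^3)$ in the theorem. Once these nonlinear estimates are carried out carefully, the algebraic structure of Steps 1--3 is identical to the Allen--Cahn proof, so no conceptually new ingredient is needed.
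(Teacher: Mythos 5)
Your proposal is correct and follows essentially the same route as the paper: the paper offers no independent argument for this theorem, saying only that it follows by combining the Allen--Cahn proof with the Cahn--Hilliard analysis of \cite{huang2021implicit}, and your three-step induction (the $\Delta^{2}$-multiplier bounds, the $H^{2}$ estimate for $\bar{e}^{n+1}$, and the $1-\xi^{m+1}$ estimate with $\mathcal{K}(\phi)=\|\nabla h(\phi)\|^{2}$, assumption \eqref{eq:assumption-2} and the $C([0,T];H^{3})$ regularity) is exactly that combination. Two details to repair when writing it out: the test function $\Delta^{2}\phi^{q}-\tau_{k}\Delta^{2}\phi^{q-1}$ is not ``one order higher'' than in the Allen--Cahn case (the paper already uses it there), and $\|\Delta F^{\prime}(\bar{\phi}^{n+1})\|\le C$ cannot be asserted pointwise since neither $\phi$ nor $\bar{\phi}^{n+1}$ is uniformly $H^{4}$-bounded (only $L^{2}(0,T;H^{4})$-type control is available), so the $P_{2}^{n}$-type term in Step 3 must be closed through the summed dissipation $\delta t\sum_{n}\|\Delta^{2}\bar{e}^{n+1}\|^{2}$ together with Cauchy--Schwarz in time rather than a uniform bound.
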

The above results can be established by combining the proofs of the above theorem and  Theorem in \cite{huang2021implicit}, we leave the detail to the interested readers.

\section{Extension to the multiple SAV approach}
\label{sec:msav}
In some cases, the nonlinear part of the free energy may contain disparate terms such that schemes with a single SAV may require excessively small time steps to obtain correct simulations \cite{cheng2018multiple}. 
It is shown in \cite{cheng2018multiple} that  the multiple SAV (MSAV) approach can overcome this difficulty. 
 
In this section, we demonstrate how to construct relaxed MGSAV schemes for gradient flow. 
Without loss of generality, we consider the following gradient flow with  two disparate  nonlinear terms (extension to more than two disparate  nonlinear terms is straightforward):
\begin{equation} \label{eq:model-problem-MGSAV}
	\left\{\begin{aligned}
		& \frac{\partial \phi}{\partial t}=-\mathcal{G} \mu, \\
		& \mu=\mathcal{L} \phi+F_{1}^{\prime}(\phi) + F_{2}^{\prime}(\phi),
	\end{aligned}\right.
\end{equation}
where $\mathcal{L}$  is a linear self-adjoint elliptic operator, $F_{1}(\phi), F_{2}(\phi)$ are nonlinear potential function,  $\mathcal{G}$ is a positive linear operator.  
The system \eqref{eq:model-problem-MGSAV} satisfies an energy dissipation law as follows
\begin{align}
	\frac{d E_{tot}(\phi)}{dt} =-\left(\mathcal{G}\mu, \mu\right),
\end{align} 
where
\begin{equation}
	E_{tot}(\phi)=\frac{1}{2}(\mathcal{L} \phi, \phi) + \int_{\Omega} F_{1}(\phi) \mathrm{d} \mathbf{x}+\int_{\Omega} F_{2}(\phi) \mathrm{d} \mathbf{x}
\end{equation}
is a free energy with lower bound $-C_0$.  
Setting $E(\phi)=E_{tot}(\phi)+C_{0}=E_{1}(\phi)+E_{2}(\phi)$ with $E_{1}(\phi)=\frac{1}{2}(\mathcal{L} \phi, \phi) + \int_{\Omega} F_{1}(\phi) \mathrm{d} \mathbf{x}+C_{1}>0,\, E_{2}(\phi)=\int_{\Omega} F_{2}(\phi) \mathrm{d} \mathbf{x}+C_{2}>0$ and introducing two SAVs $R_{1}(t)=E_{1}(\phi),\, R_{2}(t)=E_{2}(\phi)$, we can rewrite the equation \eqref{eq:model-problem-MGSAV} as
\begin{equation}\label{eq:model-problem-reformulation-MGSAV}
	\left\{\begin{aligned}
		& \frac{\partial \phi}{\partial t}=-\mathcal{G} \mu, \\
		& \mu=\frac{\delta E}{\delta \phi}=\mathcal{L} \phi+F_{1}^{\prime}(\phi)+F_{2}^{\prime}(\phi), \\
		& \frac{\mathrm{d}R_{1}(t)}{\mathrm{d}t}=-\frac{R_{1}(t)+R_{2}(t)}{E_{1}(\phi)+E_{2}(\phi)}\left(\mathcal{G}\frac{\delta E_{1}}{\delta \phi}, \mu\right),\\
		& \frac{\mathrm{d}R_{2}(t)}{\mathrm{d}t}=-\frac{R_{1}(t)+R_{2}(t)}{E_{1}(\phi)+E_{2}(\phi)}\left(\mathcal{G}\frac{\delta E_{2}}{\delta \phi}, \mu\right).
	\end{aligned}\right.
\end{equation}


Note that the above MSAV formulation is different from that used in \cite{cheng2018multiple}.
Then we construct the relaxed MGSAV BDF$k$ schemes as follows:

Given $\phi^{n-k}, ..., \phi^{n}, R_{1}^{n-k}, ..., R_{1}^{n},  R_{2}^{n-k}, ..., R_{2}^{n}$, we compute $\phi^{n+1}, R_{1}^{n+1}, R_{2}^{n+1}$ via the following two steps:

\textbf{Step 1:}  
\begin{itemize}
	\item solve $\phi_{1}^{n+1}$ and $\phi_{2}^{n+1}$ from
	\begin{eqnarray}
		&& \frac{\alpha_{k}\phi_{1}^{n+1}-\frac{1}{2}A_{k}(\phi^{n})}{\delta t}=-\mathcal{G} \left(\mathcal{L} \phi_{1}^{n+1}+F_{1}^{\prime}(B_{k}(\phi^{n}))\right), \label{eq:R-MGSAV-BDFk-1}\\
		&& \frac{\alpha_{k}\phi_{2}^{n+1}-\frac{1}{2}A_{k}(\phi^{n})}{\delta t}=-\mathcal{G} \left(\mathcal{L} \phi_{2}^{n+1}+F_{2}^{\prime}(B_{k}(\phi^{n}))\right); \label{eq:R-MGSAV-BDFk-2}
	\end{eqnarray}
	and set
	\begin{eqnarray}
		\label{eq:R-MGSAV-BDFk-8}
		&&	\bar \phi^{n+1}=\phi_{1}^{n+1}+\phi_{2}^{n+1},\\
		&&	\bar \mu^{n+1} =\mathcal{L} \bar \phi^{n+1}+F_{1}^{\prime}(\bar \phi^{n+1})+F_{2}^{\prime}(\bar \phi^{n+1});
	\end{eqnarray}
	\item  solve $\tilde{R}_{1}^{n+1}$ and $\tilde{R}_{2}^{n+1}$ from
	\begin{eqnarray}
		\label{eq:R-MGSAV-BDFk-3}
		&& \frac{\tilde{R}_{1}^{n+1}-R_{1}^{n}}{\delta t}=-\frac{\tilde{R}_{1}^{n+1}+\tilde{R}_{2}^{n+1}}{E_{1}(\bar\phi^{n+1})+E_{2}(\bar\phi^{n+1})}\left(\mathcal{G}\frac{\delta E_{1}}{\delta \phi}\left(\bar \phi^{n+1}\right), \bar \mu^{n+1}\right), \\
		\label{eq:R-MGSAV-BDFk-4}
		&& \frac{\tilde{R}_{2}^{n+1}-R_{2}^{n}}{\delta t}=-\frac{\tilde{R}_{1}^{n+1}+\tilde{R}_{2}^{n+1}}{E_{1}(\bar\phi^{n+1})+E_{2}(\bar\phi^{n+1})}\left(\mathcal{G}\frac{\delta E_{2}}{\delta \phi}\left(\bar \phi^{n+1}\right), \bar \mu^{n+1}\right);
	\end{eqnarray}
	
	\item set
	\begin{eqnarray}
		\label{eq:R-MGSAV-BDFk-5}
		&& \xi_{k, 1}^{n+1}=\frac{\tilde{R}_{1}^{n+1}}{E_{1}(\bar\phi^{n+1})}, \quad \xi_{k, 2}^{n+1}=\frac{\tilde{R}_{2}^{n+1}}{E_{2}(\bar\phi^{n+1})},\\
		\label{eq:R-MGSAV-BDFk-6}
		&& \eta_{k, 1}^{n+1}=1-(1-\xi_{k, 1}^{n+1})^{k+1}, \quad \eta_{k, 2}^{n+1}=1-(1-\xi_{k, 2}^{n+1})^{k+1},\\
		\label{eq:R-MGSAV-BDFk-7}
		&& \phi^{n+1}=\eta_{k, 1}^{n+1}\phi_{1}^{n+1}+\eta_{k, 2}^{n+1}\phi_{2}^{n+1},\\ 
		&& \mu^{n+1} =\mathcal{L} \phi^{n+1}+F_{1}^{\prime}(\phi^{n+1})+F_{2}^{\prime}(\phi^{n+1});
	\end{eqnarray}
\end{itemize}


\textbf{Step 2:} Update $R_{1}^{n+1}, R_{2}^{n+1}$ by
\begin{equation} \label{eq:R-MGSAV-BDFk-9}
	R_{1}^{n+1} = \zeta_{0} \tilde{R}_{1}^{n+1} + (1-\zeta_{0})E_{1}(\phi^{n+1}), \quad R_{2}^{n+1} = \zeta_{0} \tilde{R}_{2}^{n+1} + (1-\zeta_{0})E_{2}(\phi^{n+1}), \quad \zeta_{0} \in \mathcal{V}.
\end{equation}
Here $\mathcal{V}$ is a set defined by 
\begin{eqnarray}\label{eq:R-MGSAV-set-condition}
	&& \qquad \mathcal{V}=\left\lbrace \zeta\in [0,1] \; s.t. \; \frac{\left(R_{1}^{n+1}+R_{2}^{n+1}\right)-\left(\tilde{R}_{1}^{n+1}+\tilde{R}_{2}^{n+1}\right)}{\delta t} \leq \right.\\
	&& \qquad \qquad \left.-\gamma^{n+1}\left(\mathcal{G}\mu^{n+1}, \mu^{n+1}\right) + \frac{\tilde{R}_{1}^{n+1}+\tilde{R}_{2}^{n+1}}{E_{1}(\bar{\phi}^{n+1})+E_{2}(\bar{\phi}^{n+1})}\left(\mathcal{G}\bar\mu^{n+1}, \bar\mu^{n+1}\right), \quad \gamma^{n+1} \geq 0 \right\rbrace, \nonumber
\end{eqnarray}
with  $\gamma^{n+1} \geq 0$ to be determined so that $\mathcal{V}$ is not empty. 

Setting $\tilde R^{n+1}=\tilde R_{1}^{n+1}+\tilde R_{2}^{n+1}, R^{n+1}=R_{1}^{n+1}+R_{2}^{n+1}, E(\bar \phi^{n+1})=E_{1}(\bar \phi^{n+1})+E_{2}(\bar \phi^{n+1})$ and plugging \eqref{eq:R-MGSAV-BDFk-9} into the inequality of \eqref{eq:R-MGSAV-set-condition}, we find that if we choose  $\zeta_0$ and $\gamma^{n+1}$ such that the following condition is satisfied:
\begin{equation}\label{eq:R-MGSAV-zeta}
	(\tilde{R}^{n+1}-E(\phi^{n+1}))\zeta_0 \leq \tilde{R}^{n+1}-E(\phi^{n+1})-\delta t \gamma^{n+1} \left(\mathcal{G}\mu^{n+1}, \mu^{n+1}\right)+ \delta t \frac{\tilde{R}^{n+1}}{E(\bar{\phi}^{n+1})} \left(\mathcal{G}\bar \mu^{n+1}, \bar \mu^{n+1}\right),
\end{equation} 
then $\zeta_0\in \mathcal{V}$. Following the same arguments as in the proof of  Theorem \ref{Th: stability}, we can prove the following results for the schemes \eqref{eq:R-MGSAV-BDFk-1}-\eqref{eq:R-MGSAV-BDFk-9}. 

\begin{thm}\label{Th:MSAV-stability}
	We choose $\zeta_{0}$ in \eqref{eq:R-MGSAV-BDFk-9} and $\gamma^{n+1}$ in \eqref{eq:R-MGSAV-set-condition} as follows:
	\begin{enumerate}
		\item If $\tilde{R}^{n+1} = E(\phi^{n+1})$, we set $\zeta_{0}=0$ and $\gamma^{n+1}=\frac{\tilde{R}^{n+1} \left(\mathcal{G}\bar \mu^{n+1}, \bar \mu^{n+1}\right)}{E(\bar{\phi}^{n+1}) \left(\mathcal{G}\mu^{n+1}, \mu^{n+1}\right)}$;
		\item If $\tilde{R}^{n+1} > E(\phi^{n+1})$, we set $\zeta_{0}=0$ and
		\begin{equation}\label{eq:R-MGSAV-gamma}
			\gamma^{n+1} = \frac{\tilde{R}^{n+1}-E(\phi^{n+1})}{\delta t \left(\mathcal{G} \mu^{n+1},  \mu^{n+1}\right)} + \frac{\tilde{R}^{n+1} \left(\mathcal{G}\bar \mu^{n+1}, \bar \mu^{n+1}\right)}{E(\bar{\phi}^{n+1}) \left(\mathcal{G}\mu^{n+1}, \mu^{n+1}\right)};
		\end{equation}
		\item If $\tilde{R}^{n+1} < E(\phi^{n+1})$ and $\tilde{R}^{n+1}-E(\phi^{n+1})+ \delta t \frac{\tilde{R}^{n+1}}{E(\bar{\phi}^{n+1})} \left(\mathcal{G}\bar \mu^{n+1}, \bar \mu^{n+1}\right) \geq 0$, we set $\zeta_{0}=0$ and 	
		$\gamma^{n+1}$ the same as \eqref{eq:R-MGSAV-gamma}.  
		\item  If $\tilde{R}^{n+1} < E(\phi^{n+1})$ and $\tilde{R}^{n+1}-E(\phi^{n+1})+ \delta t \frac{\tilde{R}^{n+1}}{E(\bar{\phi}^{n+1})} \left(\mathcal{G}\bar \mu^{n+1}, \bar \mu^{n+1}\right)< 0$, we set $\zeta_{0}=1-\frac{\delta t \tilde{R}^{n+1} \left(\mathcal{G}\bar \mu^{n+1}, \bar \mu^{n+1}\right)}{E(\bar{\phi}^{n+1})\left(E(\phi^{n+1})-\tilde{R}^{n+1}\right)}$ and 
		$\gamma^{n+1}=0$.
	\end{enumerate}
	Then, \eqref{eq:R-MGSAV-zeta} is satisfied in all cases above and  $\zeta_0\in \mathcal{V}$. Furthermore,  	given $R^{n} \geq 0$, 
	we have $R^{n+1}\geq 0$, and the scheme \eqref{eq:R-MGSAV-BDFk-1}-\eqref{eq:R-MGSAV-BDFk-9} with the above choice of  $\zeta_0$ and  $\gamma^{n+1}$  is unconditionally energy stable in the sense that
	\begin{equation}\label{eq:R-MGSAV-stability}
		R^{n+1} - R^{n} \leq -\delta t \gamma^{n+1} \left(\mathcal{G}\mu^{n+1}, \mu^{n+1}\right)\leq 0.
	\end{equation}
	Furthermore, we have 
	\begin{equation}
		R^{n+1}\le E(\phi^{n+1})\quad\forall n\ge 0.
	\end{equation}
	
\end{thm}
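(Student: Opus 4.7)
The plan is to mirror the proof of Theorem \ref{Th: stability} almost verbatim, exploiting the fact that although the MGSAV scheme tracks two auxiliary variables, the relevant identities reduce to a single-SAV form as soon as we add the two $\tilde R$-equations. Specifically, setting $\tilde R^{n+1}=\tilde R_1^{n+1}+\tilde R_2^{n+1}$ and adding \eqref{eq:R-MGSAV-BDFk-3}--\eqref{eq:R-MGSAV-BDFk-4} yields
\begin{equation*}
\frac{\tilde R^{n+1}-R^{n}}{\delta t}=-\frac{\tilde R^{n+1}}{E(\bar\phi^{n+1})}\bigl(\mathcal{G}\bar\mu^{n+1},\bar\mu^{n+1}\bigr),
\end{equation*}
which has exactly the same structure as \eqref{eq:R-IMEX-SAV-BDFk-3} with $\mathcal{K}(\bar\phi^{n+1})$ replaced by $(\mathcal{G}\bar\mu^{n+1},\bar\mu^{n+1})$. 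Similarly, summing the two relations in \eqref{eq:R-MGSAV-BDFk-9} gives $R^{n+1}=\zeta_0\tilde R^{n+1}+(1-\zeta_0)E(\phi^{n+1})$, matching \eqref{eq:R-IMEX-SAV-BDFk-6}.

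First I would verify $\zeta_0\in\mathcal{V}$ in each of the four cases. Plugging the summed relaxation formula into \eqref{eq:R-MGSAV-set-condition} reduces the constraint to \eqref{eq:R-MGSAV-zeta}, and one then substitutes the prescribed $(\zeta_0,\gamma^{n+1})$ for each case. Cases 1--3 set $\zeta_0=0$, making the left-hand side of \eqref{eq:R-MGSAV-zeta} either zero or negative while $\gamma^{n+1}$ is chosen precisely so the right-hand side equals it (Case 1) or dominates it (Cases 2--3). Case 4 is a direct algebraic check that the chosen $\zeta_0$ saturates \eqref{eq:R-MGSAV-zeta} with $\gamma^{n+1}=0$.

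Next I would prove $R^{n+1}\ge 0$ and the energy stability \eqref{eq:R-MGSAV-stability}. From the summed identity above, $\tilde R^{n+1}=R^{n}/\bigl(1+\delta t(\mathcal{G}\bar\mu^{n+1},\bar\mu^{n+1})/E(\bar\phi^{n+1})\bigr)\ge 0$ whenever $R^{n}\ge 0$, since $E(\bar\phi^{n+1})>0$ and $\mathcal{G}$ is positive. Because $E(\phi^{n+1})>0$ and $\zeta_0\in[0,1]$, the summed relaxation gives $R^{n+1}\ge 0$. Adding the summed identity to the defining inequality of $\mathcal{V}$ telescopes the $\tilde R^{n+1}$ terms and the flux terms, leaving $R^{n+1}-R^{n}\le -\delta t\,\gamma^{n+1}(\mathcal{G}\mu^{n+1},\mu^{n+1})\le 0$.

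Finally, $R^{n+1}\le E(\phi^{n+1})$: in Cases 1--3 we have $\zeta_0=0$, which gives the equality $R^{n+1}=E(\phi^{n+1})$, while in Case 4 we need $\zeta_0\in[0,1]$ together with $\tilde R^{n+1}<E(\phi^{n+1})$ to conclude via convex combination. The upper bound $\zeta_0\le 1$ is immediate since the subtracted fraction is nonnegative. The lower bound $\zeta_0\ge 0$ is the only subtle point: it is equivalent to $E(\bar\phi^{n+1})(E(\phi^{n+1})-\tilde R^{n+1})\ge \delta t\,\tilde R^{n+1}(\mathcal{G}\bar\mu^{n+1},\bar\mu^{n+1})$, which follows by multiplying the Case 4 sign condition $\tilde R^{n+1}-E(\phi^{n+1})+\delta t\tilde R^{n+1}(\mathcal{G}\bar\mu^{n+1},\bar\mu^{n+1})/E(\bar\phi^{n+1})<0$ by the strictly negative quantity $-E(\bar\phi^{n+1})$. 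The main (and essentially only) obstacle is this Case 4 sign check; once it is in hand, every other step is a transcription of the corresponding argument in Theorem \ref{Th: stability} with the flux $\mathcal{K}(\phi^{n+1})$ replaced by $(\mathcal{G}\mu^{n+1},\mu^{n+1})$.
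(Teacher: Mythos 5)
Your proposal is correct and follows the same route as the paper: sum the two auxiliary-variable equations \eqref{eq:R-MGSAV-BDFk-3}--\eqref{eq:R-MGSAV-BDFk-4} and the two relaxation formulas in \eqref{eq:R-MGSAV-BDFk-9} to reduce everything to the single-SAV structure of Theorem \ref{Th: stability}, verify $\zeta_0\in\mathcal{V}$ case by case via \eqref{eq:R-MGSAV-zeta}, obtain $\tilde R^{n+1}\ge 0$ from the solved formula, add the summed identity to the set condition for \eqref{eq:R-MGSAV-stability}, and use the convex-combination argument (with the Case 4 sign condition giving $\zeta_0\in[0,1]$) for $R^{n+1}\le E(\phi^{n+1})$. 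Your explicit check that the Case 4 condition implies $\zeta_0\ge 0$ is exactly the detail the paper leaves implicit, so no gap remains.
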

\section{Numerical results and discussions} \label{sec:Numerical-examples}
We  present in this section  some numerical results to validate the efficiency and accuracy of the R-GSAV approach, and provide detailed comparisons between the original SAV, R-SAV, GSAV and R-GSAV approaches. 

Unless specified otherwise, we consider examples with periodic boundary condition and use the Fourier spectral method for spatial discretization.  
The default value of the parameter $\gamma$ is set to $0.95$ for the R-SAV approaches.

\textbf{Example 1.} The Allen-Cahn equation 
\begin{equation}\label{eq:Allen-Cahn}
\frac{\partial \phi}{\partial t}=\alpha \Delta \phi-\left(1-\phi^{2}\right) \phi,
\end{equation}
with periodic boundary conditions.

{\em Case A. } We add an external force  $f$ to  \eqref{eq:Allen-Cahn} so that  its exact solution is
\begin{equation} \label{eq:AC-CH-exact-solution-example}
\phi(x, y, t)=\exp (\sin (\pi x) \sin (\pi y)) \sin (t).
\end{equation}
We set  $\Omega=(0, 2)\times(0, 2)$,  $\alpha=0.01^2$, and use $64^2$ Fourier modes for space discretization so that the spatial discretization error is negligible when compared with the time discretization error.

In Figure \ref{Fig:AC-order-test-three-method}, we plot the convergence rate of the $H^2$ error at $T = 1$ by using various first- and second-order schemes. 
We observe that (i) the expected convergence rates are obtained for all cases; (ii)  
the errors of  R-SAV (resp. R-GSAV) schemes are  significantly smaller than that of SAV (resp. GSAV)  schemes; 
(iii) the R-SAV approach is the most accurate but it requires solving two linear systems. 
In the left of Fig.\,\ref{Fig:AC-exact-solution-zeta}, we plot  the convergence rate of the $H^2$ error at $T = 1$ by using GSAV/BDFk and R-GSAV/BDFk ($k= 3, 4, 5$) schemes, and observe that all schemes achieve their desired order of accuracy, but the improvements by R-SAV and R-GSAV over SAV and GSAV  for higher-order schemes are not as significant as for lower-order schemes.
In the right of  Fig.\,\ref{Fig:AC-exact-solution-zeta}, we  presents evolution of relaxation parameter $\zeta_0$ using R-GSAV/BDF2 scheme with $\delta t=1e-3$, and observe that, except at an initial time interval, $\zeta_0$ takes the value zero. 



\begin{figure}[htbp]
	\centering
	\includegraphics[width=5.3cm]{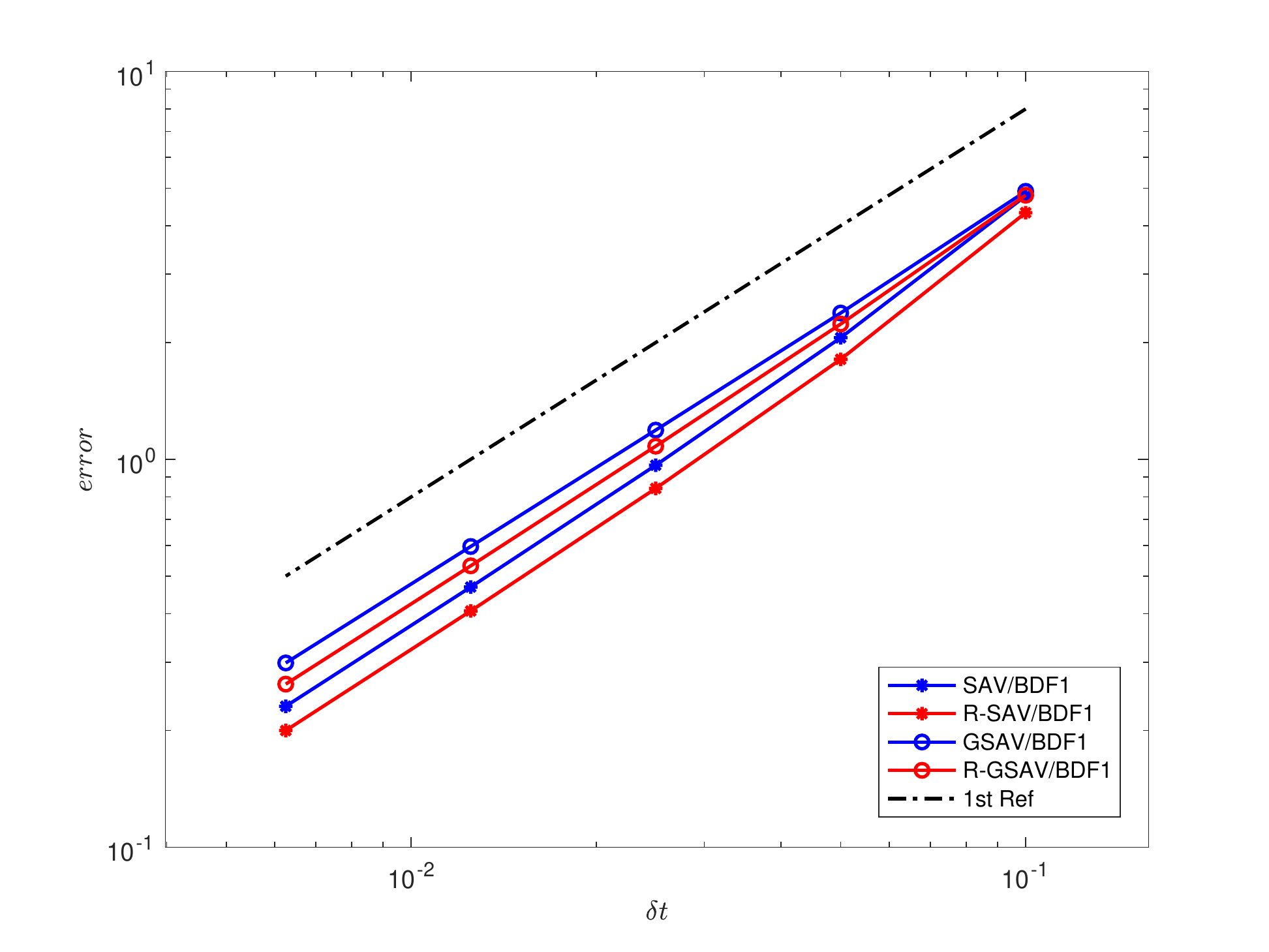}\hspace{-6mm}
	\includegraphics[width=5.3cm]{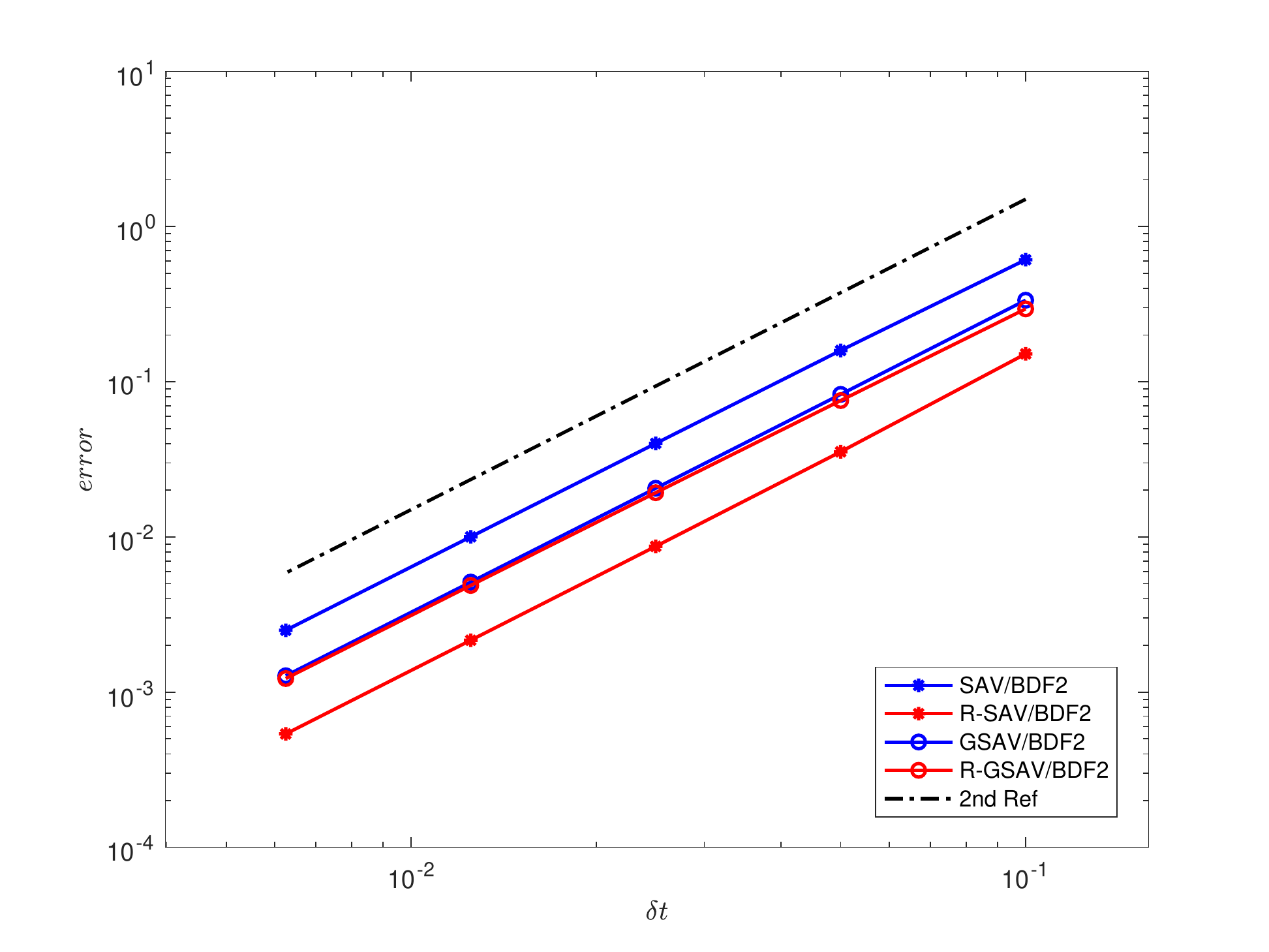}\hspace{-6mm}
	\hspace{-1cm}
	\caption{Example 1A. Convergence rates for Allen-Cahn equation using various schemes. Left: first-order; Right: Second-order.}
	\label{Fig:AC-order-test-three-method}
\end{figure} 

\begin{figure}[htbp]
	\centering
		\includegraphics[width=5.3cm]{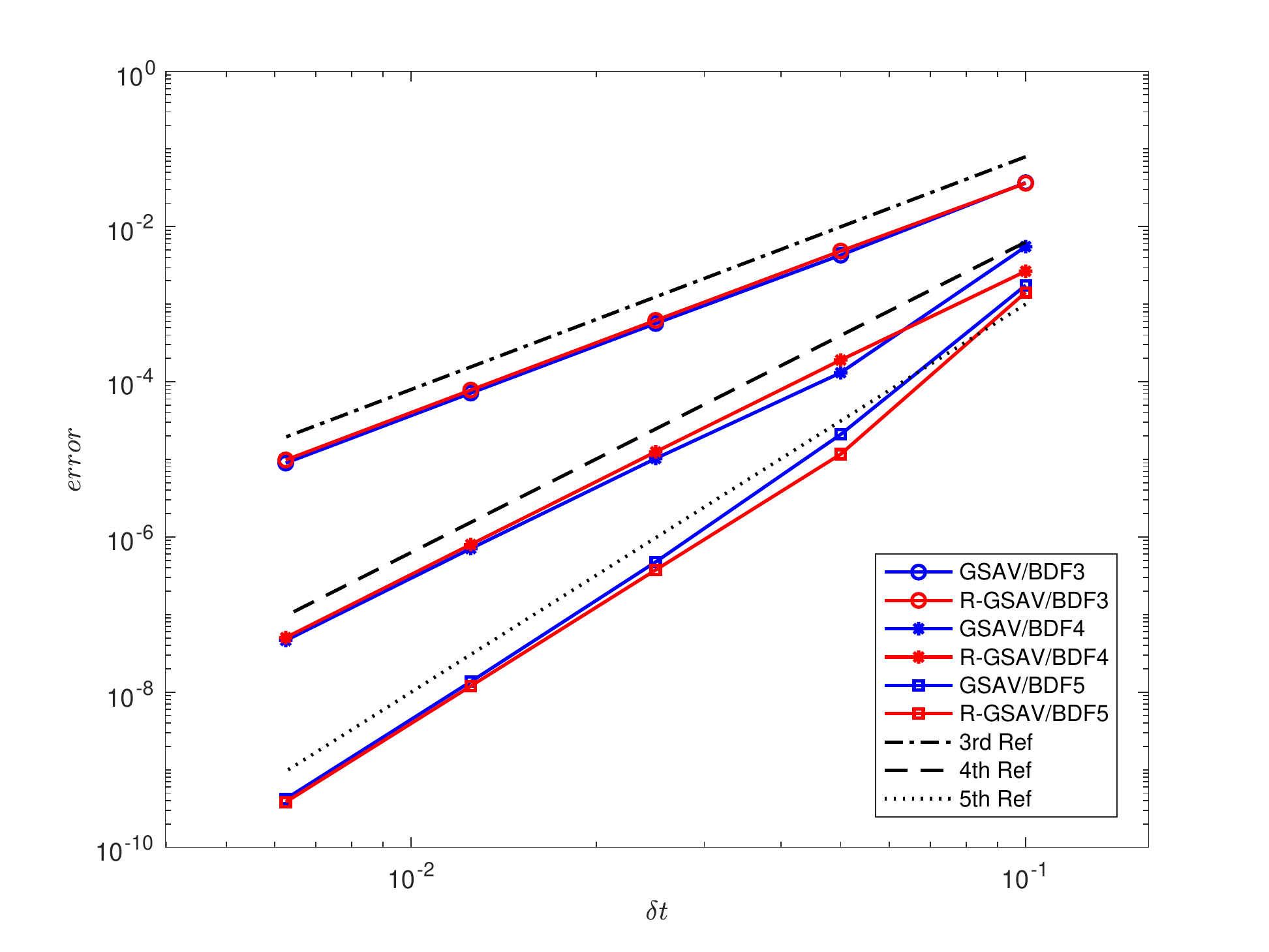}\hspace{-6mm}
	\includegraphics[width=5.3cm]{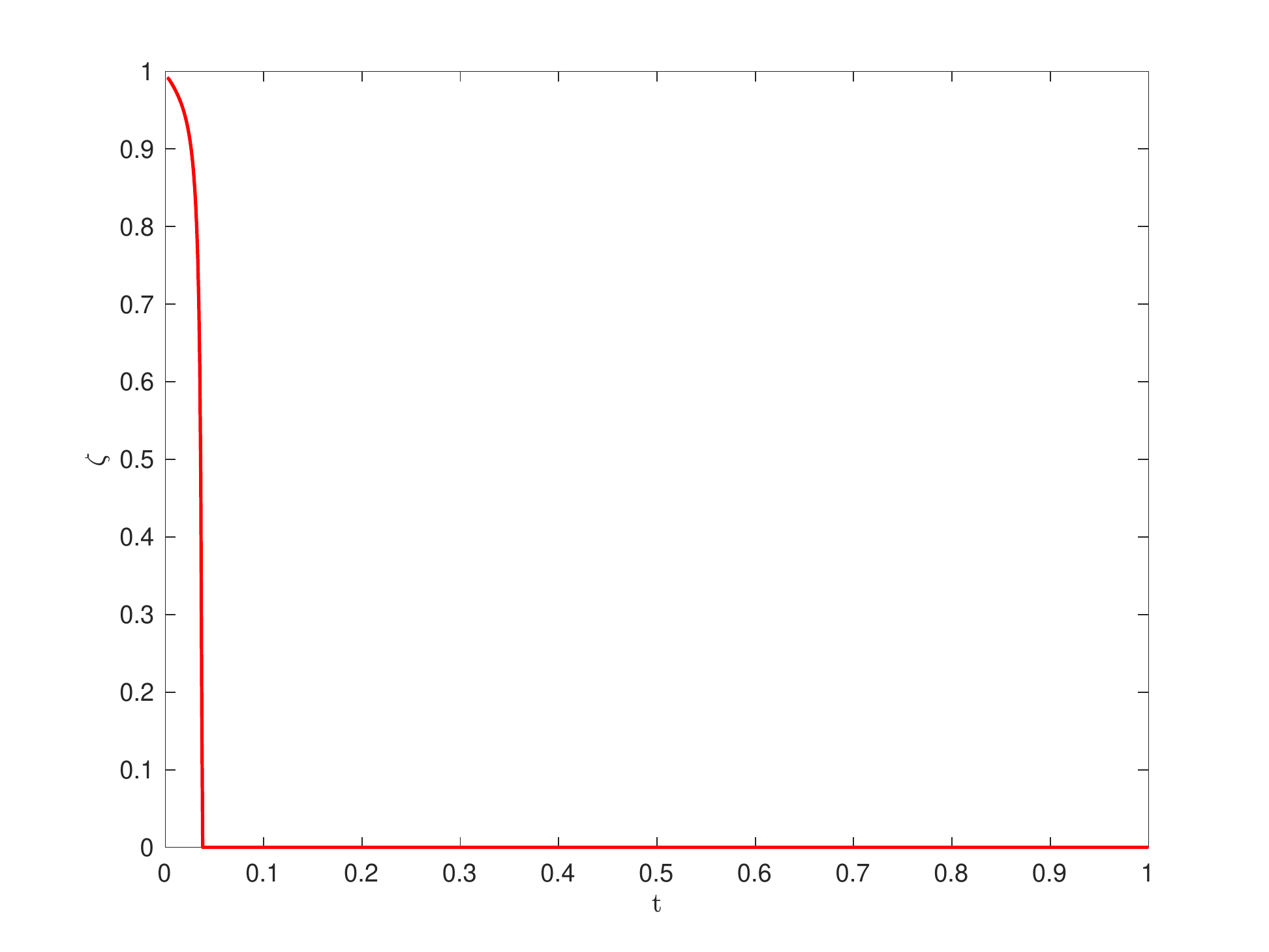}\hspace{-6mm}
	\hspace{-1cm}
	\caption{Example 1A. Left: GSAV/BDFk and R-GSAV/BDFk ($k=3, 4, 5$) schemes; Right: evolution of relaxation $\zeta_{0}$ using R-GSAV/BDF2 scheme with $\delta t=1e-3$.}
	\label{Fig:AC-exact-solution-zeta}
\end{figure}


{\em Case B. }  We set $\Omega=[0, L_x]\times[0, L_{y}]$ with $L_x=L_y=1$, and choose the initial condition as            \begin{equation}\label{eq:AC-CH-initial-condition-star-shape}
\begin{aligned}
	& \phi(x, y)=\tanh \frac{1.5+1.2 \cos (6 \theta)-2 \pi r}{\sqrt{2\alpha}}, \\ 
	& \theta=\arctan \frac{y-0.5 L_{y}}{x-0.5 L_{x}}, \quad r=\sqrt{\left(x-\frac{L_{x}}{2}\right)^{2}+\left(y-\frac{L_{y}}{2}\right)^{2}},
\end{aligned}
\end{equation}
where $(\theta,r)$ are the polar coordinates of $(x, y)$.
 The other parameters are $\alpha=0.01^2, m_0=0.1, \beta=1$ and $128^2$ Fourier modes. 
 We   use the results of the semi-implicit/BDF2 scheme with $ \delta t = 1e-5$ as the reference solution. 
The $L^{2}$-norm error of four schemes at $T=200$ with different time steps are shown in Table \ref{table:comparison-dt-schemes}. 
We observe that R-GSAV (resp. R-SAV) schemes can significantly reduce the error of the solution compared with GSAV (resp. SAV) schemes, and the effect of R-GSAV scheme on improving accuracy is more obvious. 
In Fig.\,\ref{Fig:AC-star-shape-energy}, we present  a comparison of energy (left) and energy error (middle) of  schemes. 
Fig.\,\ref{Fig:AC-star-shape-energy} (right) shows that the evolution of energy error at various time step.

\linespread{1.2}
\begin{table}[htbp] 
	\centering
	\caption{Example 1B. A comparison of  $L^2$-error by SAV/BDF2, R-SAV/BDF2, GSAV/BDF2 and R-GSAV/BDF2 schemes for Allen-Cahn equation at $T=200$ with different time step.}
	 \label{table:comparison-dt-schemes}
	\begin{tabular}{llllllll}
		\hline
		 & SAV & R-SAV & GSAV & R-GSAV \\
		\hline
	1e-1  & 4.30E-04 &  2.72E-04  & 1.27E-03   & 2.65E-04  \\
	1e-2  & 4.53E-05 &  2.93E-06  & 1.47E-04   & 2.90E-06  \\
	1e-3  & 6.26E-07 &  2.96E-08  & 2.38E-06   & 2.94E-08  \\
	\hline
	\end{tabular}
\end{table}

\begin{figure}[htbp]
	\centering
	\includegraphics[width=5.2cm]{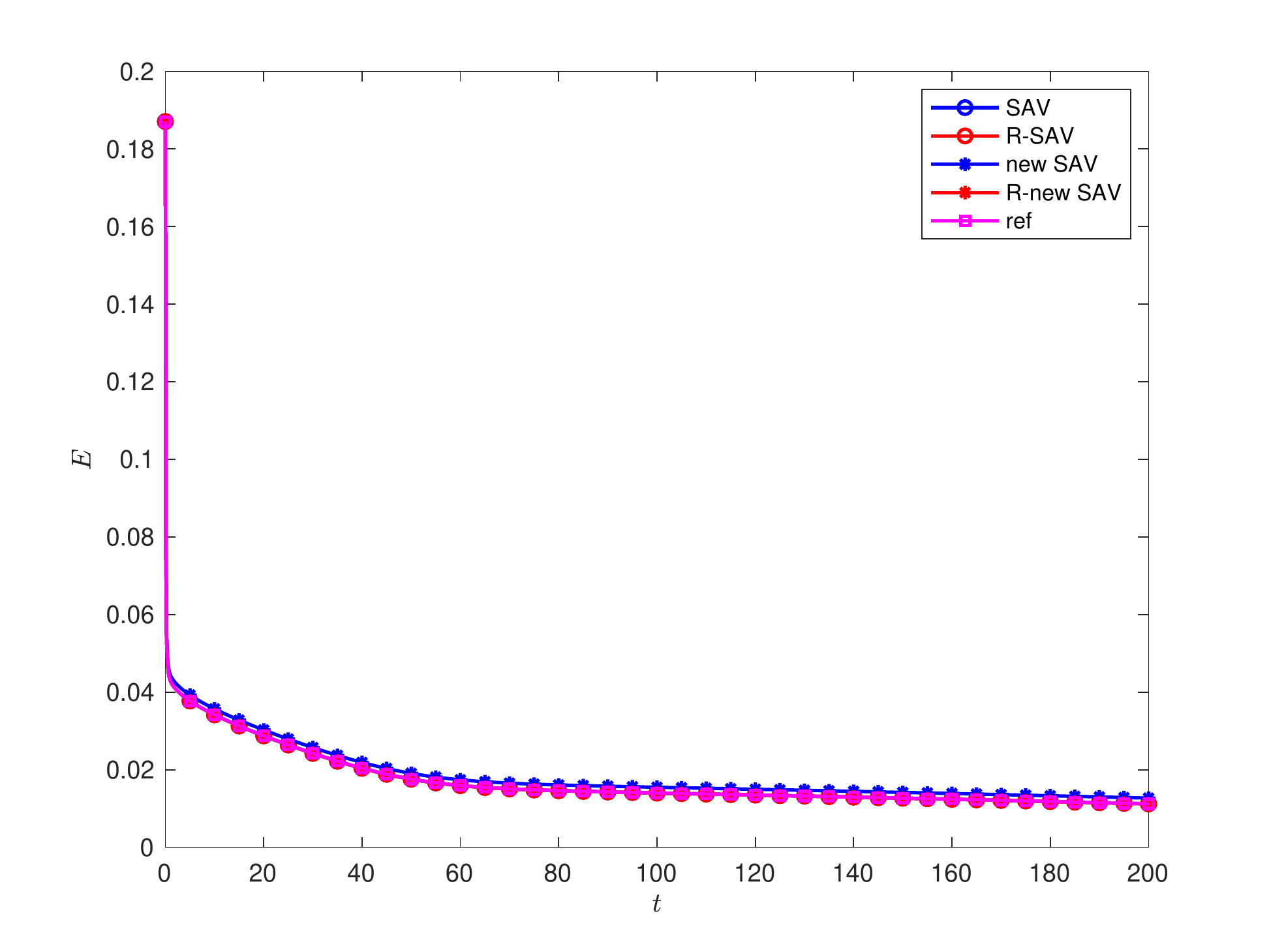} \hspace{-6mm}
	\includegraphics[width=5.2cm]{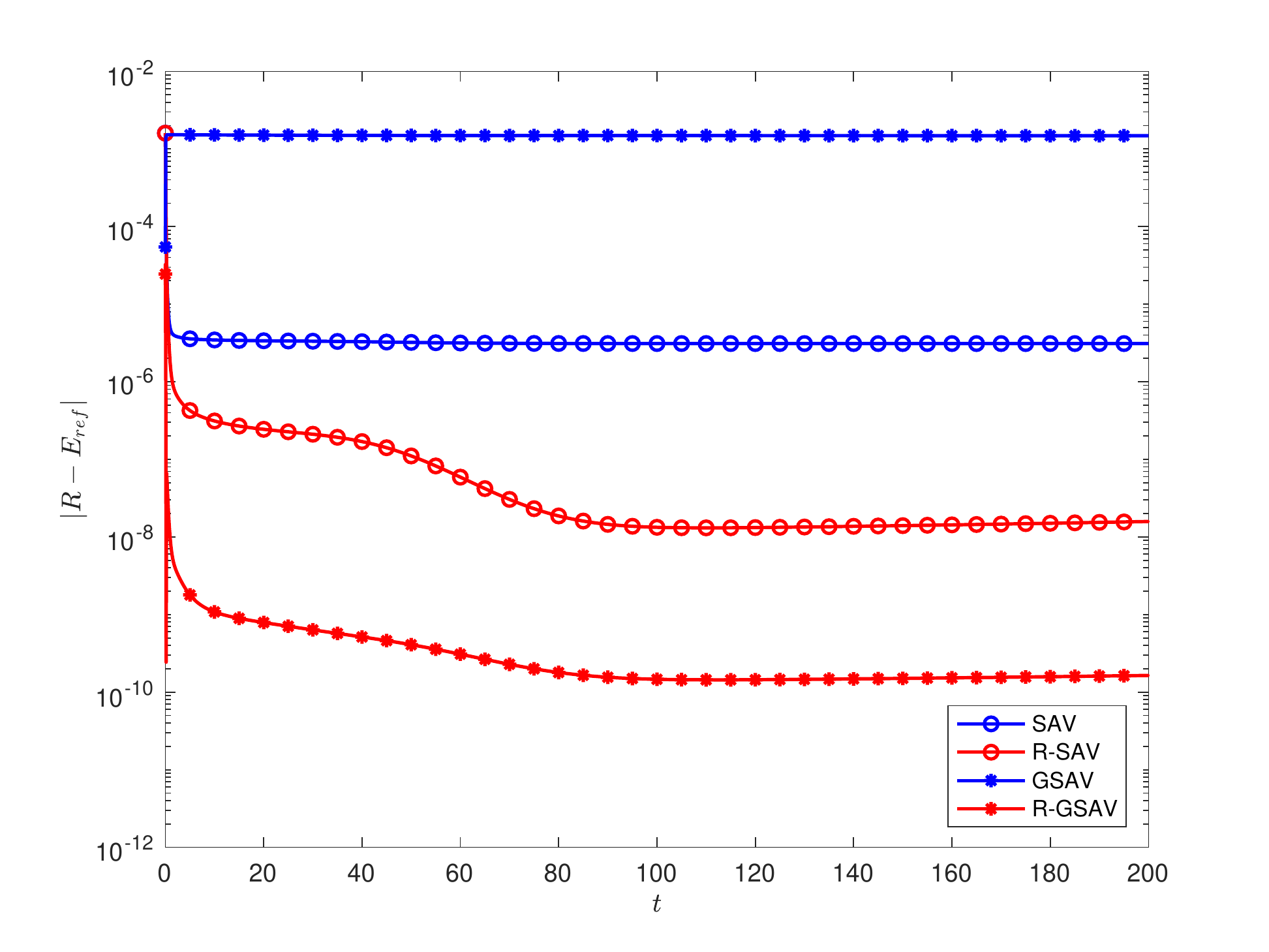} \hspace{-6mm}
	\includegraphics[width=5.2cm]{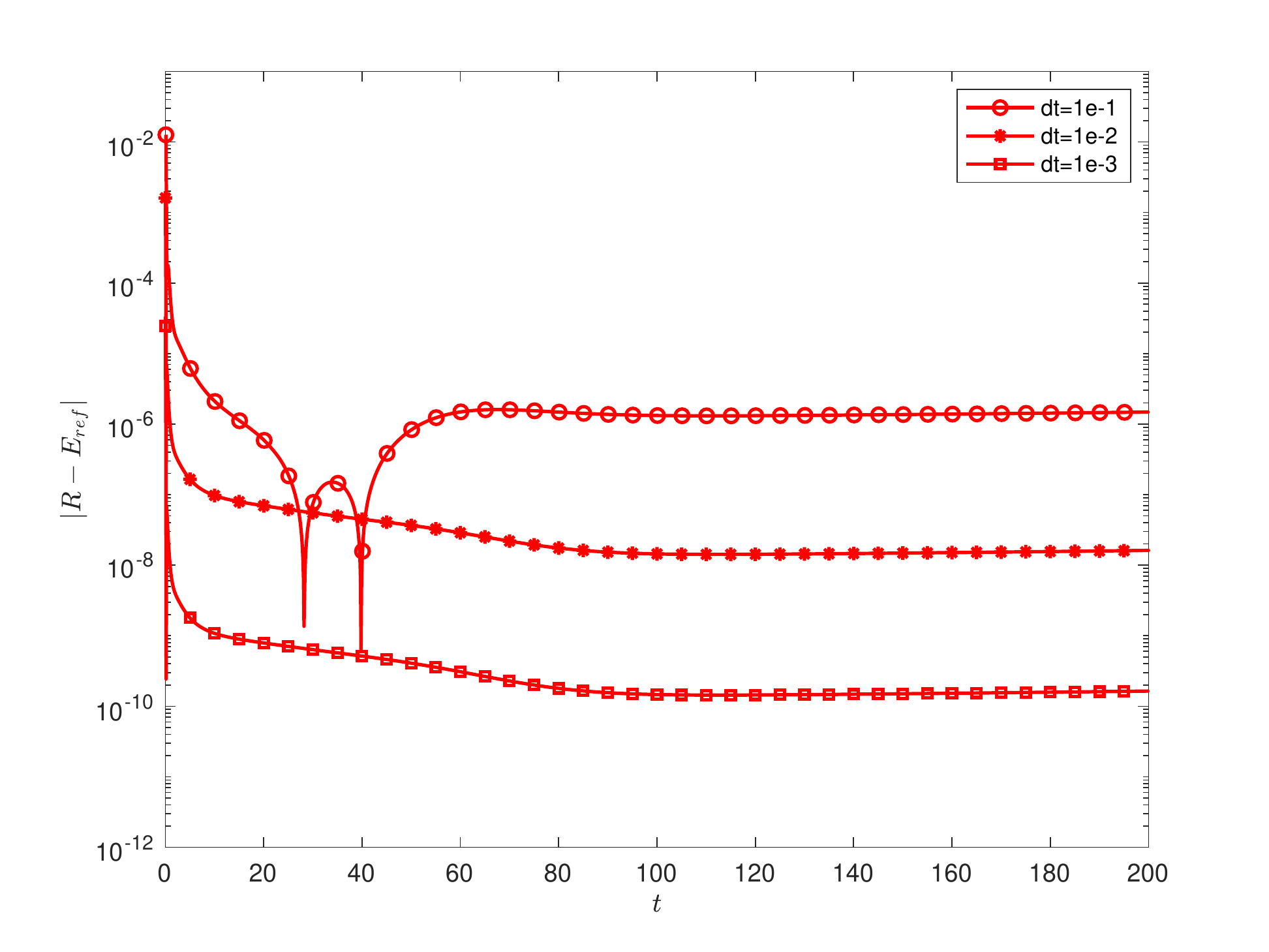}
	
	\caption{Example 1B. Allen-Cahn equation: a comparison of energy (left) and energy error (middle) of SAV/BDF2, R-SAV/BDF2, GSAV/BDF2 and R-GSAV/BDF2 scheme; and a comparison of energy error of R-GSAV/BDF2 scheme at different time steps (right).}
		\label{Fig:AC-star-shape-energy}
\end{figure} 

\textbf{Example 2.} The Cahn-Hilliard equation
\begin{equation}
\frac{\partial \phi}{\partial t}=-m_{0} \Delta\left(\alpha \Delta \phi-\left(1-\phi^{2}\right) \phi\right).
\end{equation}

{\em Case A.}
We set the exact solution to be  \eqref{eq:AC-CH-exact-solution-example}, and
set $\alpha=0.04, m_0=0.005$.
Convergence rates of different schemes are presented in Fig.\,\ref{Fig:CH-order-test-three-method}. The results are similar to those for the Allen-Cahn equation.

\begin{figure}[htbp]
	\centering
	\includegraphics[width=5.3cm]{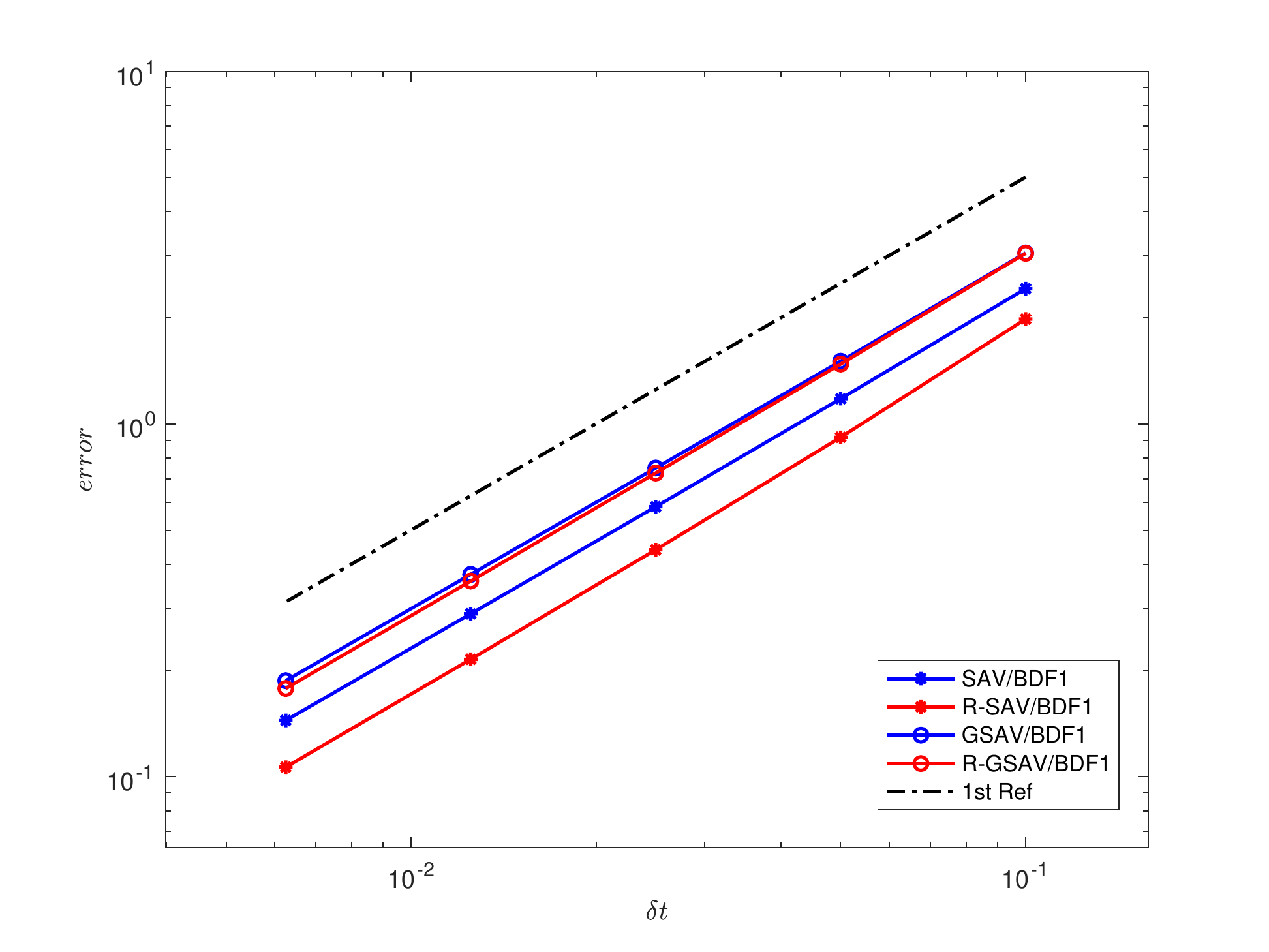}\hspace{-6mm}
	\includegraphics[width=5.3cm]{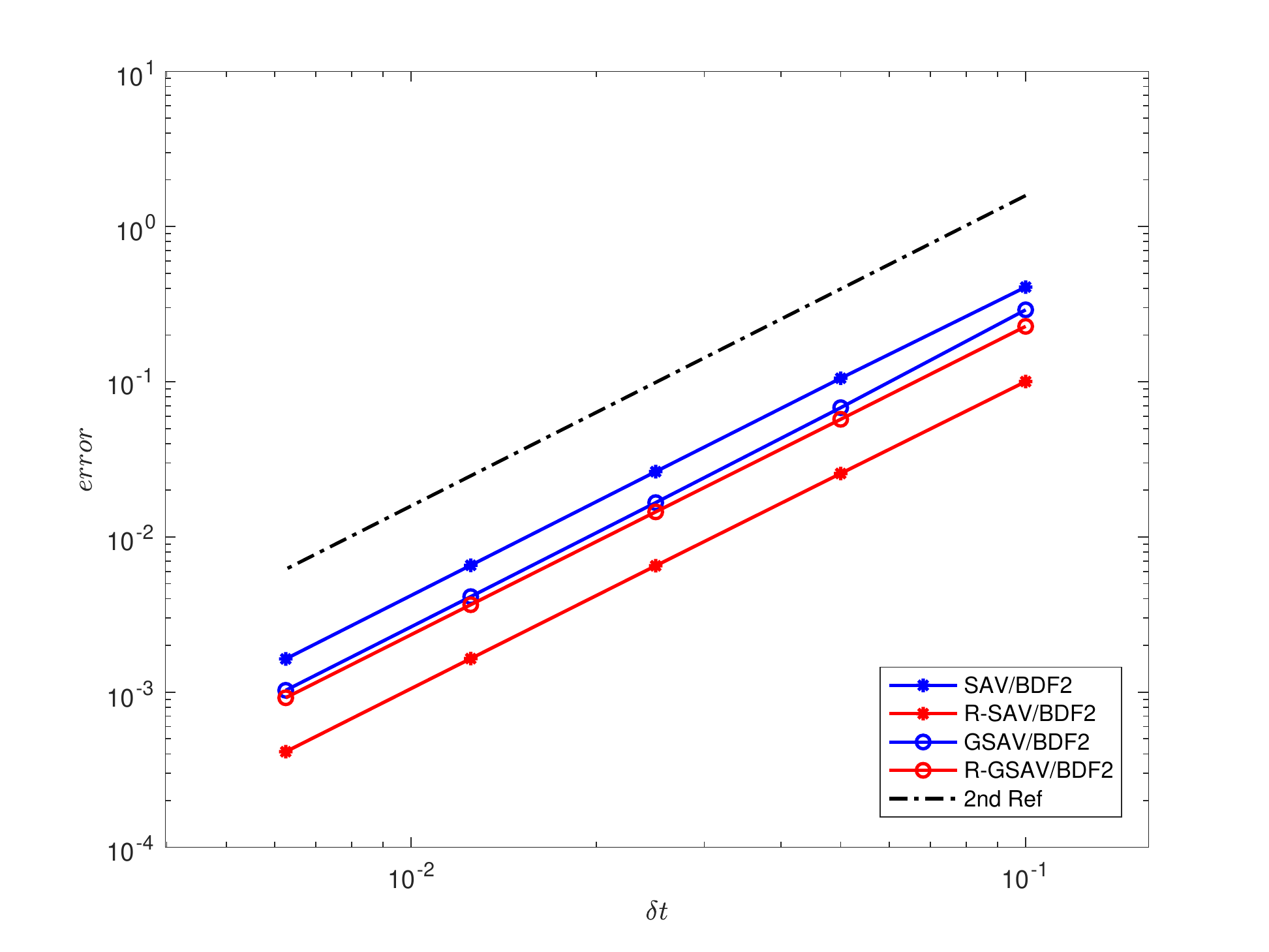}\hspace{-6mm}
	\includegraphics[width=5.3cm]{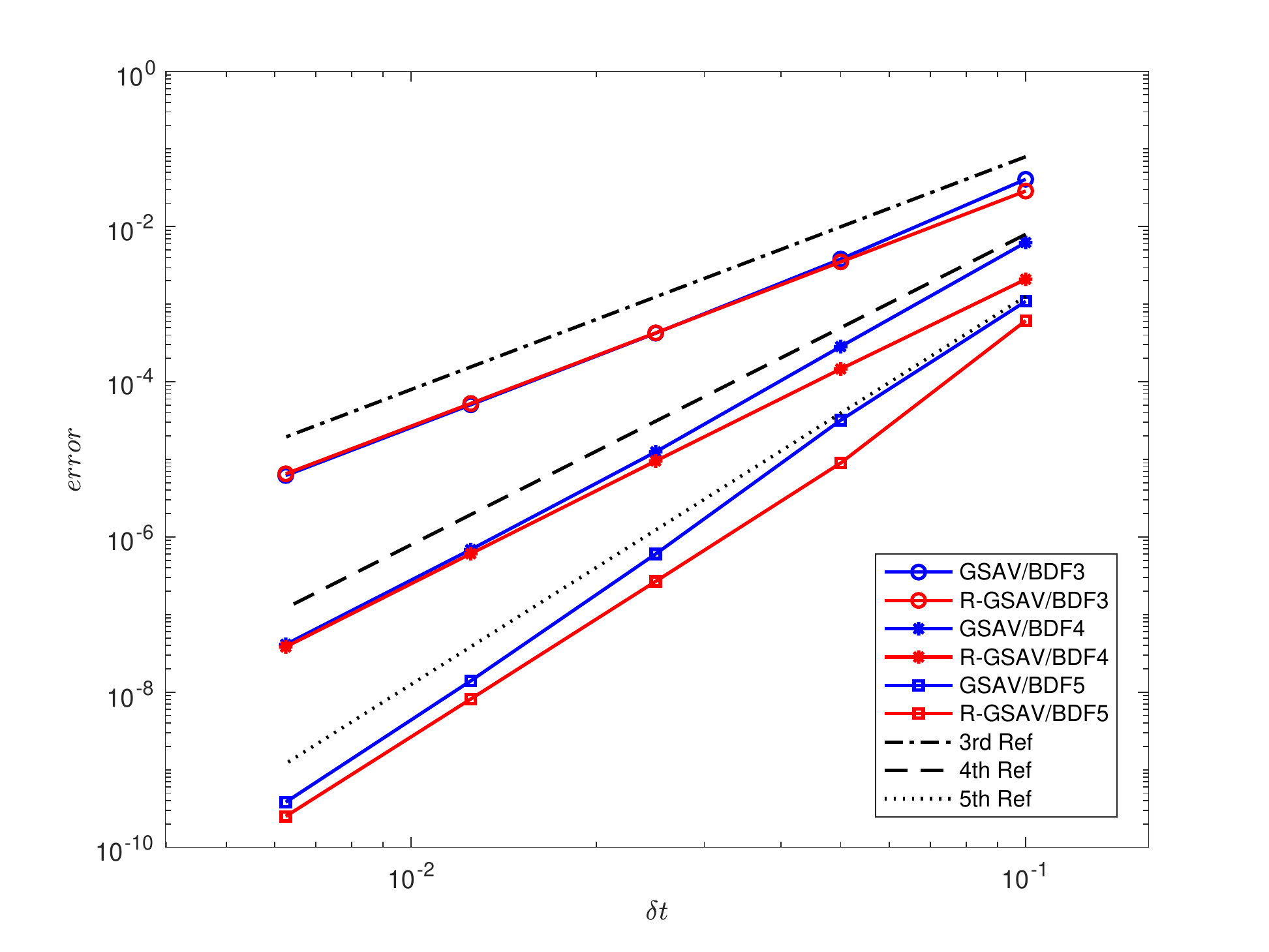}
	\hspace{-1cm}
	\caption{Example 2A. Convergence test for Cahn-Hilliard equation using SAV/BDFk, R-SAV/BDFk ($k=1, 2$), GSAV/BDFk and R-GSAV/BDFk ($k=1, 2, 3, 4, 5$) schemes.}
	\label{Fig:CH-order-test-three-method}
\end{figure}

{\em Case B}.  We set the initial condition as in \eqref{eq:AC-CH-initial-condition-star-shape}, and 
set $m_0=0.1, \alpha=0.01, \beta=1$.   
The other parameters are chosen to be the same as in Case B of Example 1.  
Numerical solutions at $T=0.1$  using the GSAV/BDF2  and  R-GSAV/BDF2 schemes  with $\delta t=1e-3$  are plotted in Fig.\,\ref{Fig:CH-comparison-star-shape} along with the  reference solution obtained by semi-Implicit/BDF2 scheme with time step $\delta t=1e-5$. 
	We observe that  with $\delta t=1e-3$, the solution by the GSAV scheme  is totally wrong  while the solution by the R-GSAV scheme is indistinguishable with the reference solution. We also observe that for this example  $\zeta_0=0$ at all times.

\begin{figure}[htbp]
\centering
$$\includegraphics[width=4.5cm]{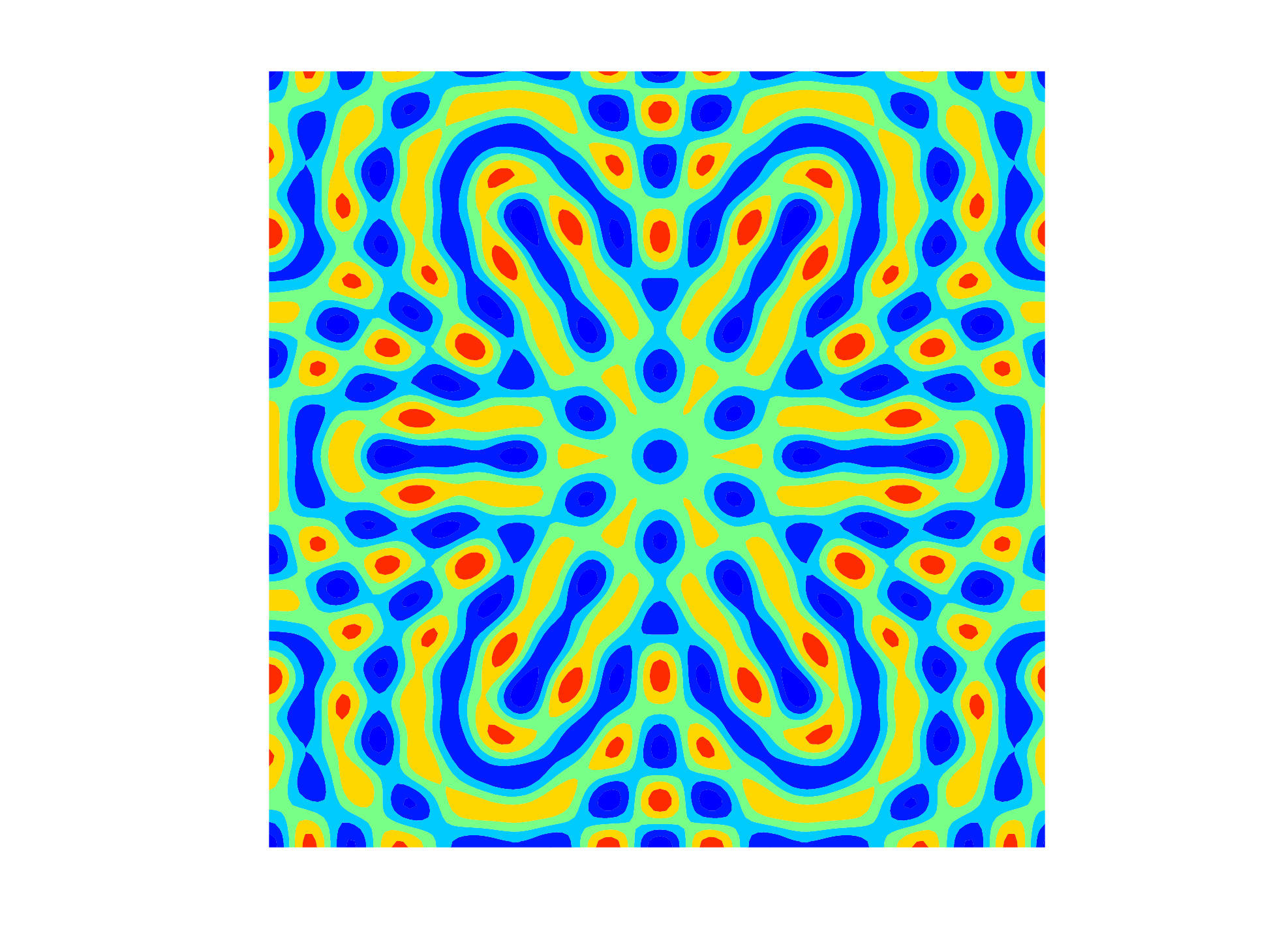}\hspace{-9mm}
	\includegraphics[width=4.5cm]{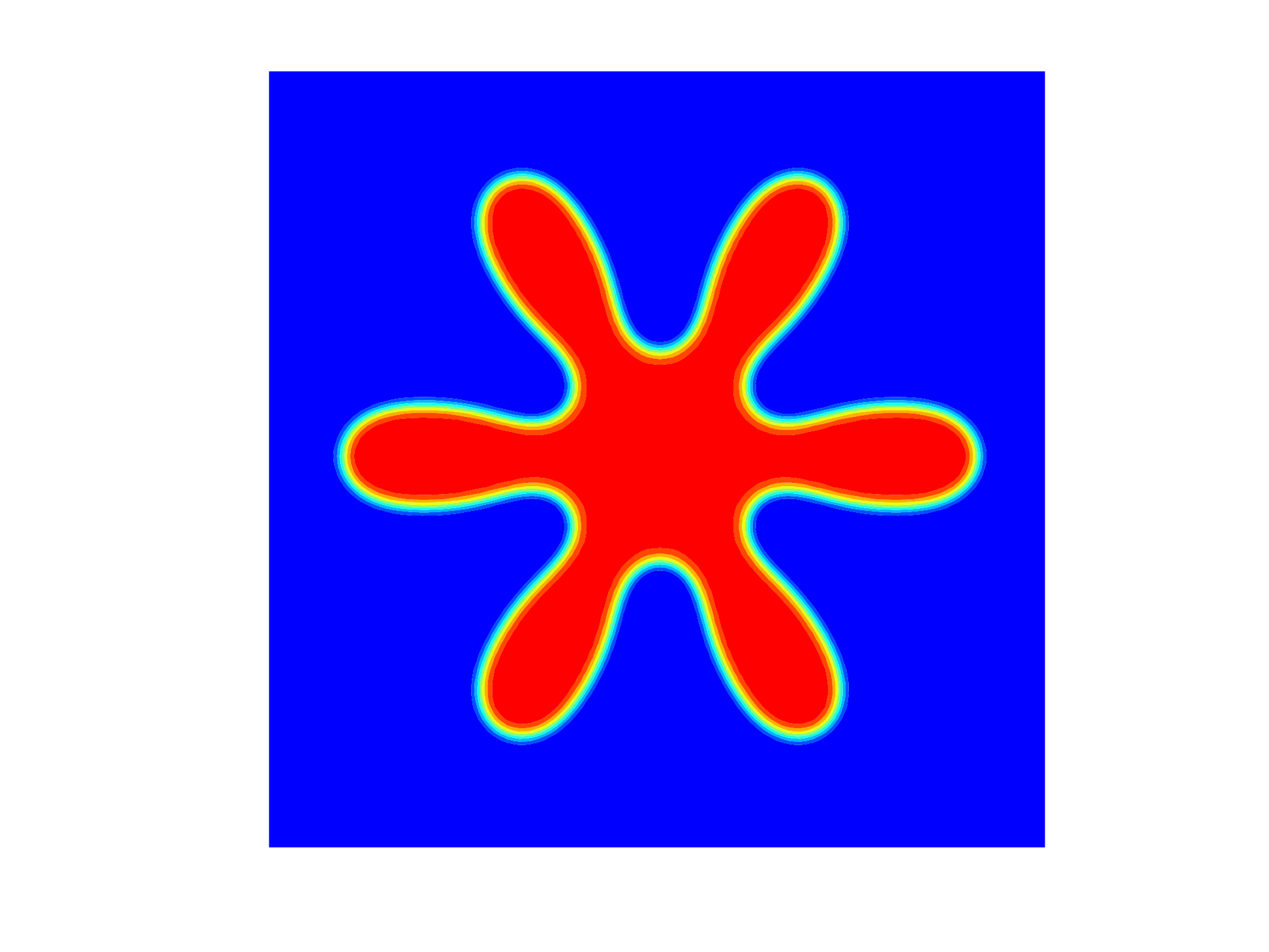}\hspace{-9mm}
	\includegraphics[width=4.5cm]{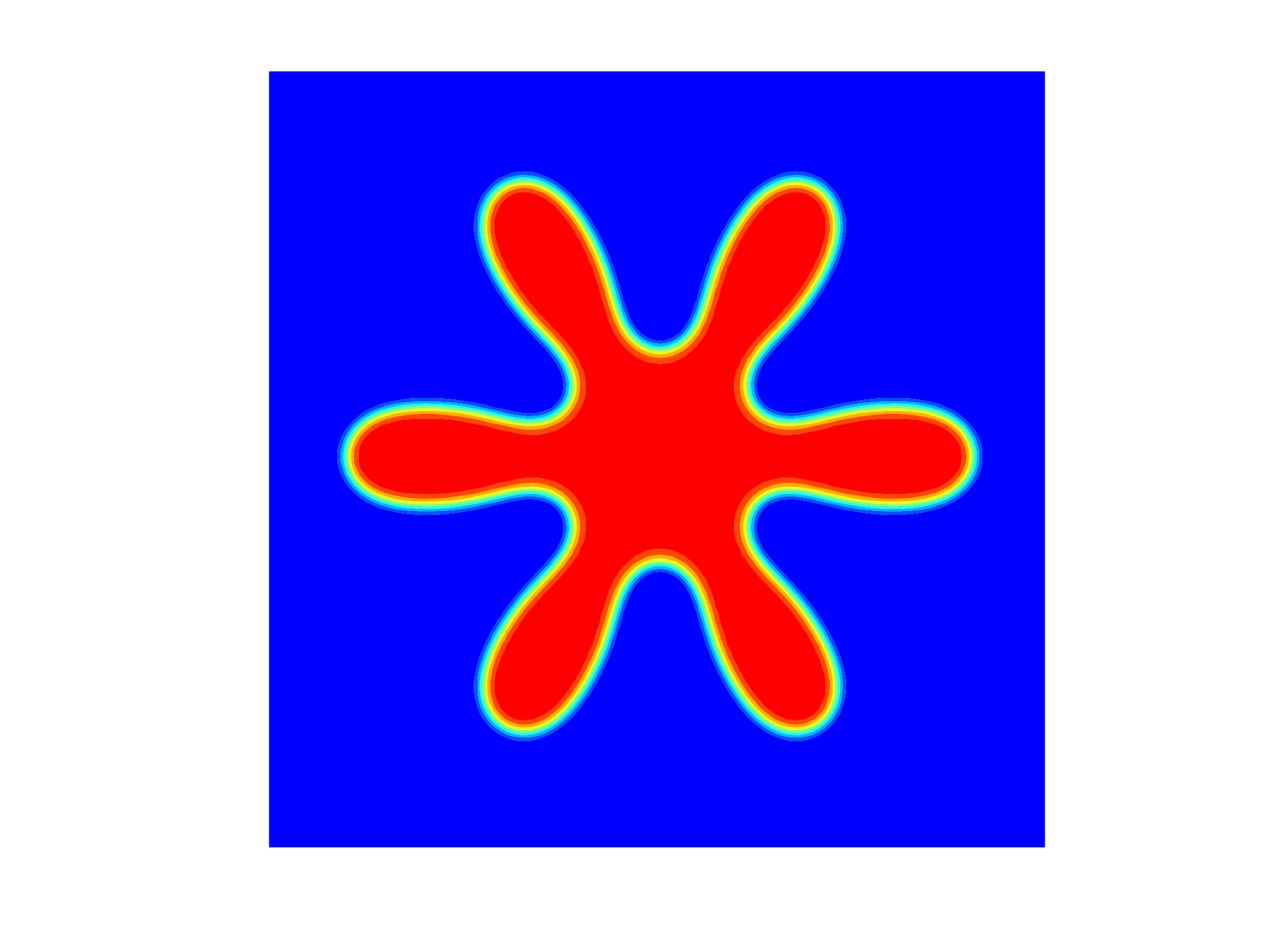}\hspace{-9mm}
		\includegraphics[width=4.5cm]{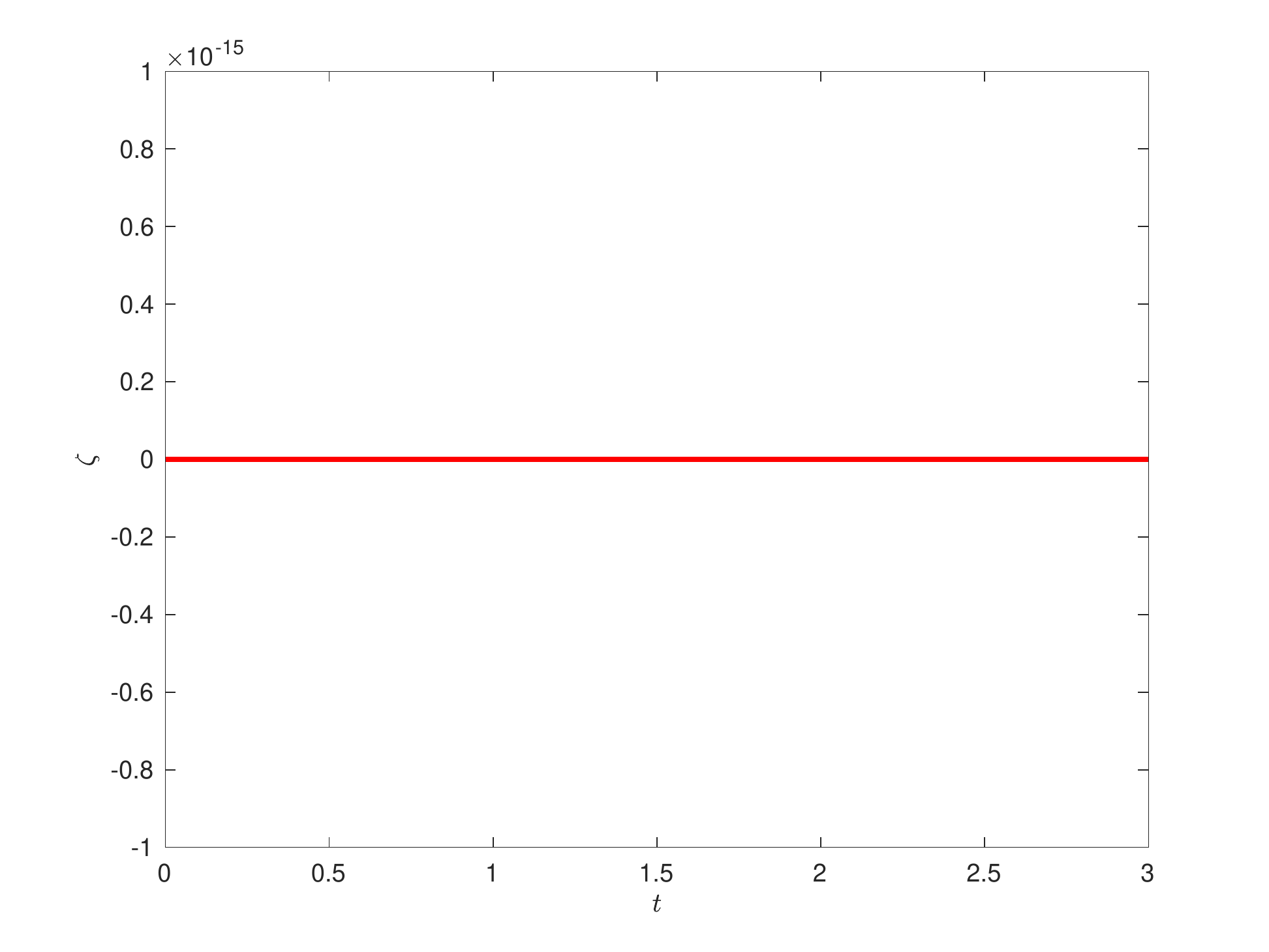}$$
	\label{Fig:CH-star-shape-zeta}
\label{Fig:CH-comparison-star-shape}
\caption{Example 2B. Profiles of $\phi$ at $T=0.1$. Dynamics driven by the Cahn-Hilliard equation using GSAV/BDF2 scheme (first), R-GSAV/BDF2 scheme (second) and reference solution (third), and the evolution of $\zeta_0$ (fourth).}
\end{figure}

\textbf{Example 3.}  In order to show that the R-GSAV approach can be used to simulate more complex nonlinear phenomena,  we consider, as an example,  the phase-field crystal model 
\begin{equation}
\left\{
\begin{array}{l}\frac{\partial \phi}{\partial t}=M \Delta \mu,  \quad \mathbf{x} \in \Omega, t>0, \\ 
\mu=(\Delta+\beta)^{2} \phi+\phi^{3}-\epsilon \phi, \quad \mathbf{x} \in \Omega, t>0, \\ 
\phi(\mathbf{x}, 0)=\phi_{0}(\mathbf{x}),
\end{array}\right.
\end{equation} 
which is a gradient flow
based on the following total free energy
\begin{equation}
	E(\phi)=\int_{\Omega}\left(\frac{1}{2} \phi(\Delta+\beta)^{2} \phi+\frac{1}{4} \phi^{4}-\frac{\epsilon}{2} \phi^{2}\right) \mathrm{d} \mathbf{x},
\end{equation}
where $M>0$ is the mobility coefficient. 
In the following simulations, we choose $M=1, \beta=1$. 

{\em Case A.}  Crystal growth in a super-cooled liquid in $2D$. 
We set the initial condition  to be
\begin{equation}
\phi\left(x_{l}, y_{l}, 0\right)=\bar{\phi}+C_{1}\left(\cos \left(\frac{C_{2}}{\sqrt{3}} y_{l}\right) \cos \left(C_{2} x_{l}\right)-0.5 \cos \left(\frac{2 C_{2}}{\sqrt{3}} y_{l}\right)\right), \quad l=1,2,3,
\end{equation}
where $x_l$ and $y_l$ define a local system of Cartesian coordinates that is oriented with the crystallite lattice, and the constant parameters $\bar{\phi} = 0.285, C_{1} = 0.446, C_{2} = 0.66$. 
Then, three crystallites in three small square patches with each length of $40$ which located at $(350, 400)$, $(200, 200)$, and $(600, 300)$ respectively, are defined perfectly. 
In order to generate crystallites with different orientations, we use the following affine transformation to produce rotation
\begin{equation}
x_{l}(x, y)=x \sin (\theta)+y \cos (\theta), \quad y_{l}(x, y)=-x \cos (\theta)+y \sin (\theta),
\end{equation}
where angles are chosen as $\theta=-\frac{\pi}{4}, 0, \frac{\pi}{4}$ respectively. 
We choose $1024^2$ Fourier modes to discretize the space and use relatively small the time step $\delta t =0.02$ for better accuracy. 
And we take the other parameters $\epsilon=0.25, T=2000$.
Fig.\,\ref{Fig:PFC-R-newSAV-rand} shows crystal growth in a supe-rcooled liquid driven by the PFC equation using the R-GSAV/BDF2 scheme. 
It also demonstrate that the different alignment of the crystallites causes defects and dislocations.  These results are consistent with those in \cite{li2020stability, yang2017linearly}. For this example, the relaxation parameter $\zeta_0$ is also zero at all times.

\begin{figure}[htbp]
\centering
\subfigure[profiles of $\phi$ at $T=0, 100, 200, 300$]{
	\hspace{-9mm}
	\includegraphics[width=4.7cm]{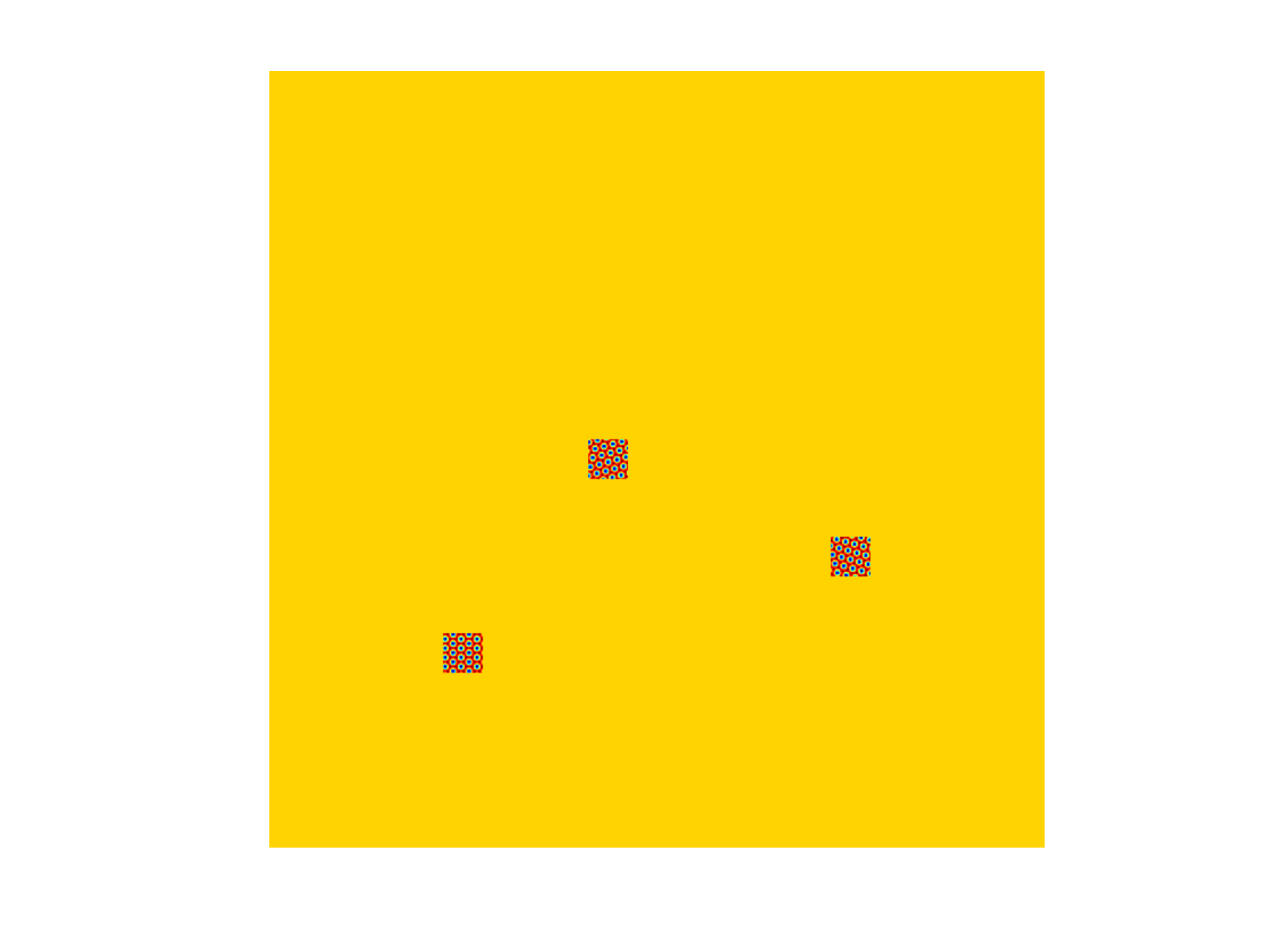}\hspace{-9mm}
	\includegraphics[width=4.7cm]{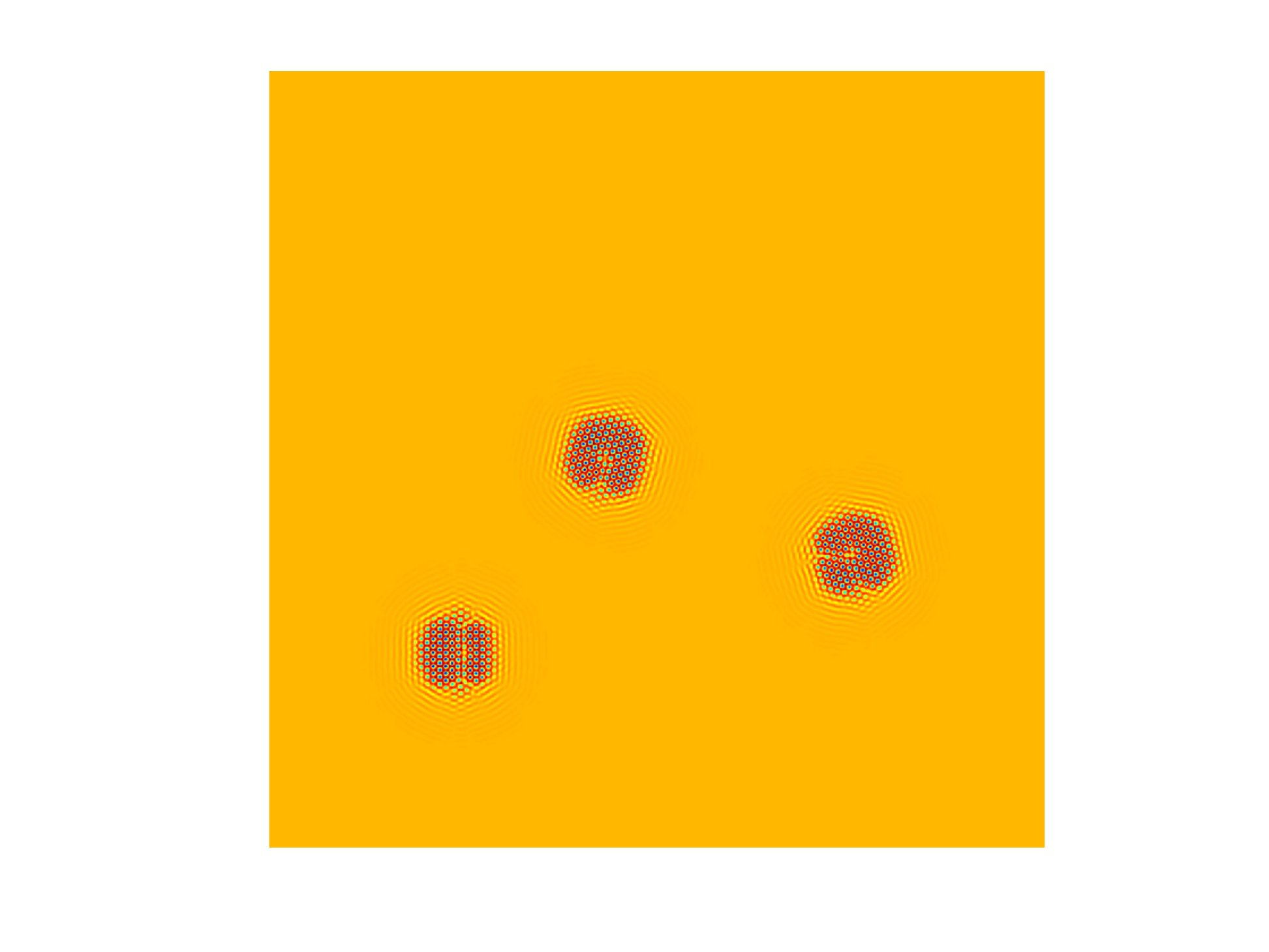}\hspace{-9mm}
	\includegraphics[width=4.7cm]{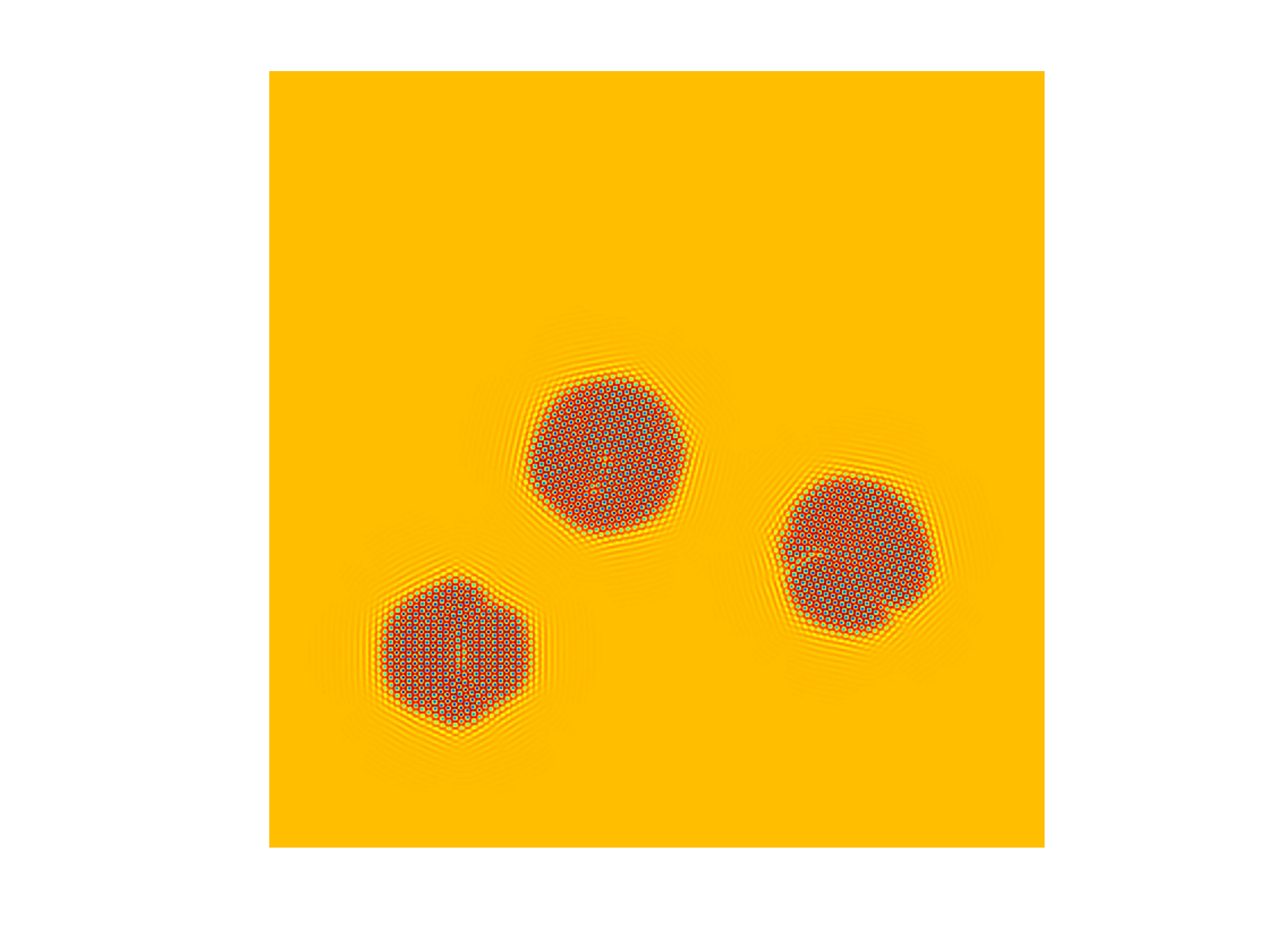}\hspace{-9mm}
	\includegraphics[width=4.7cm]{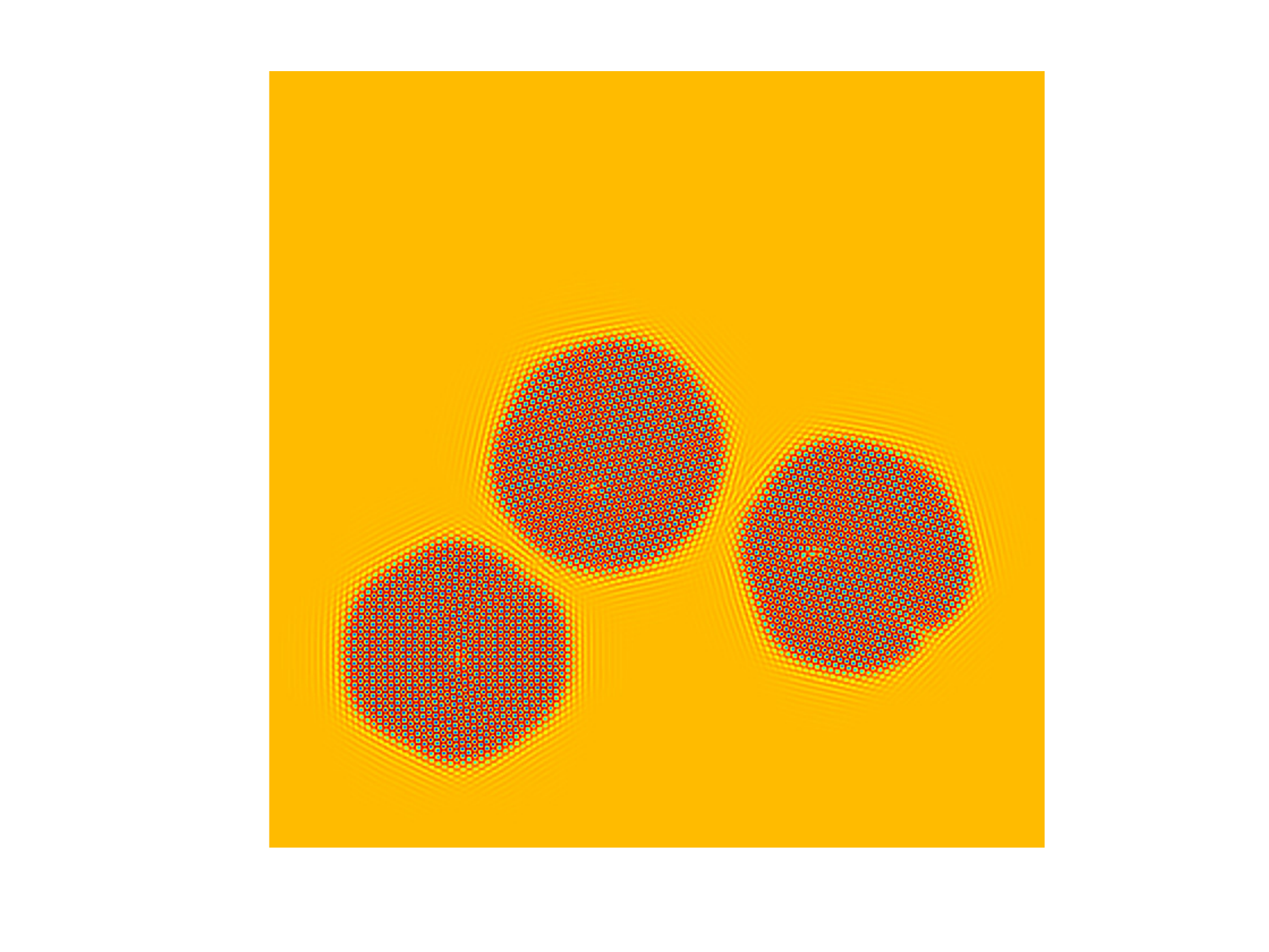}
} 
\subfigure[profiles of $\phi$ at $T=400, 500, 600, 700$]{
   \hspace{-9mm}
	\includegraphics[width=4.7cm]{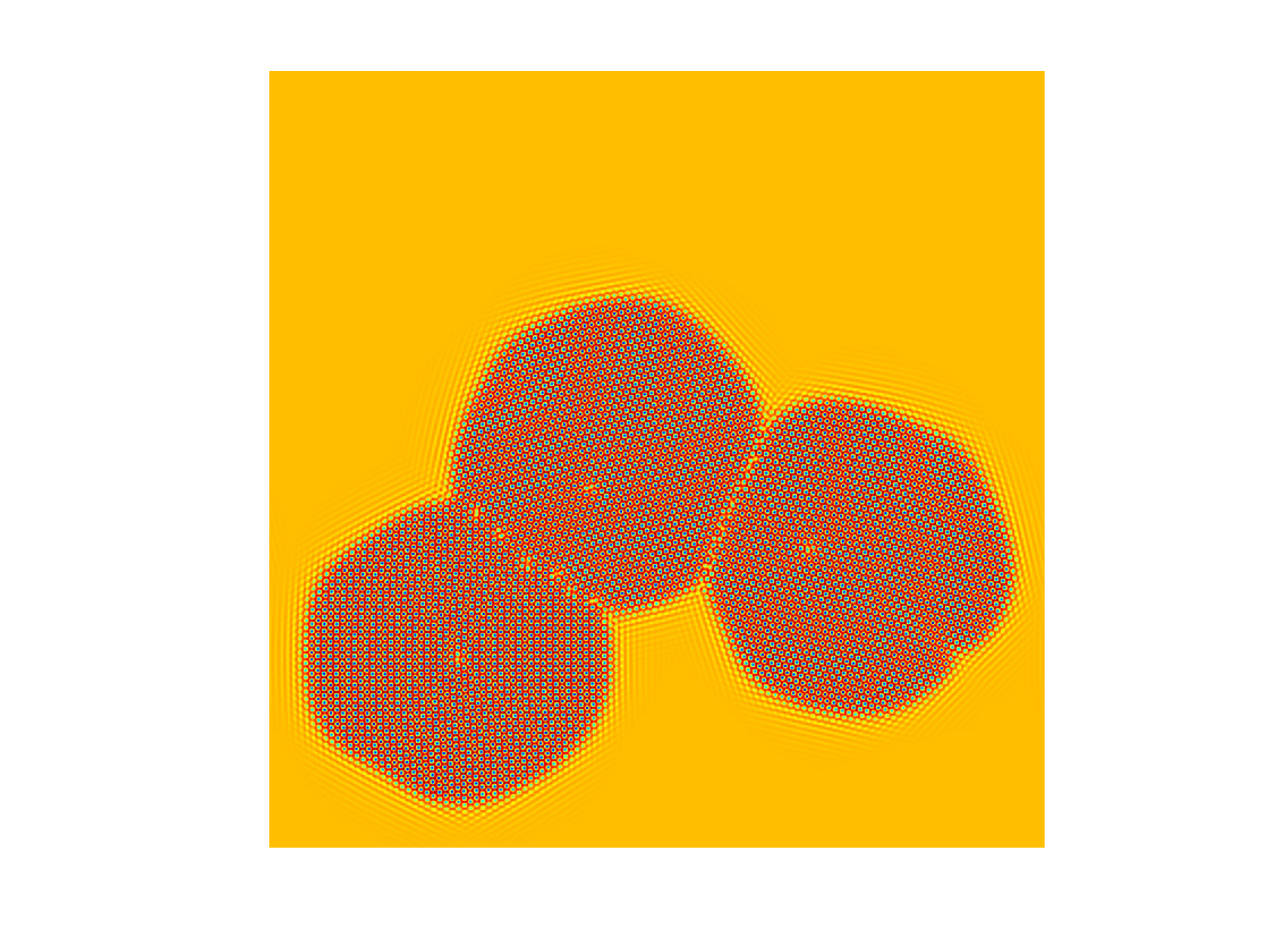}\hspace{-9mm}
	\includegraphics[width=4.7cm]{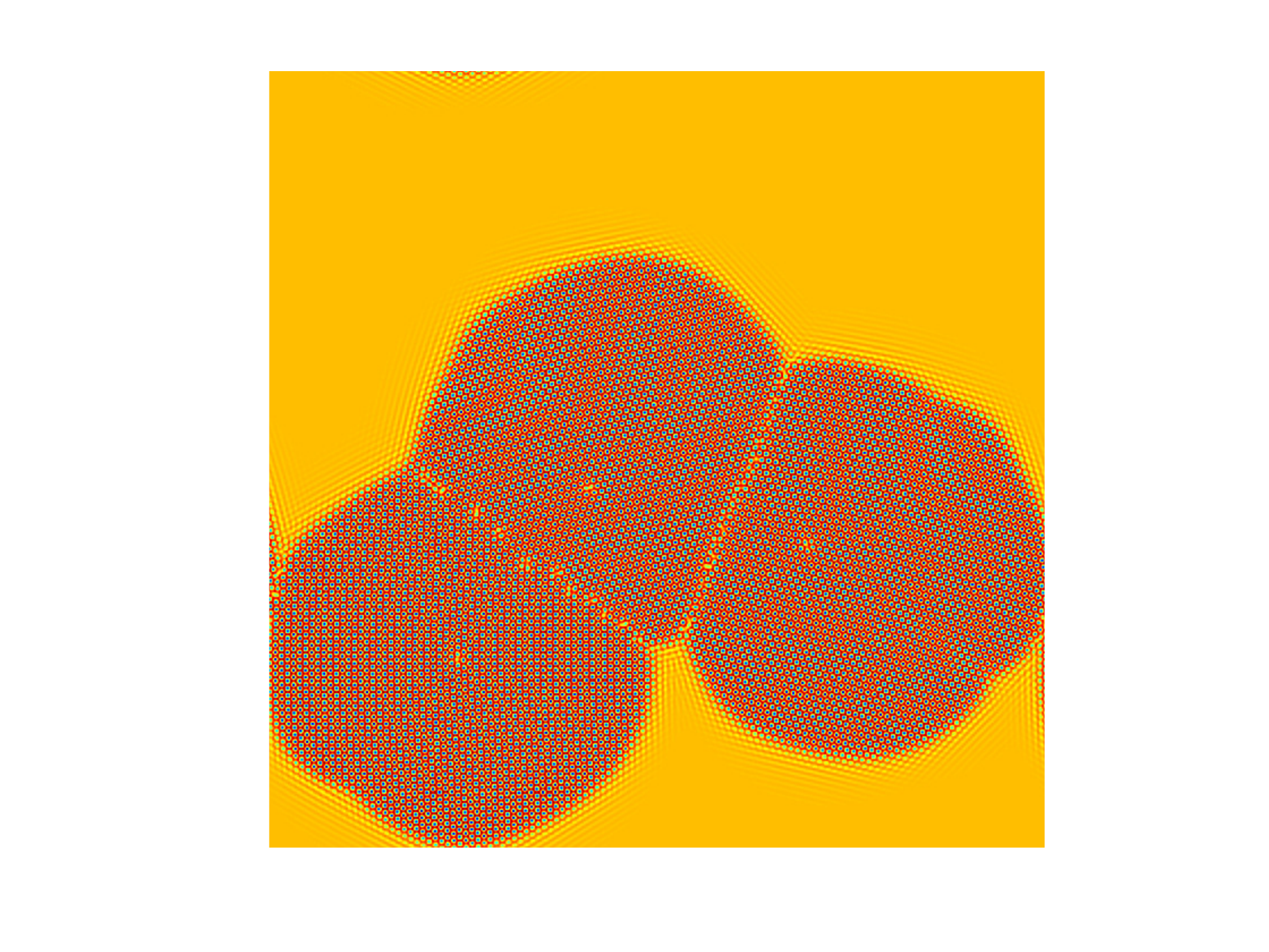}\hspace{-9mm}
	\includegraphics[width=4.7cm]{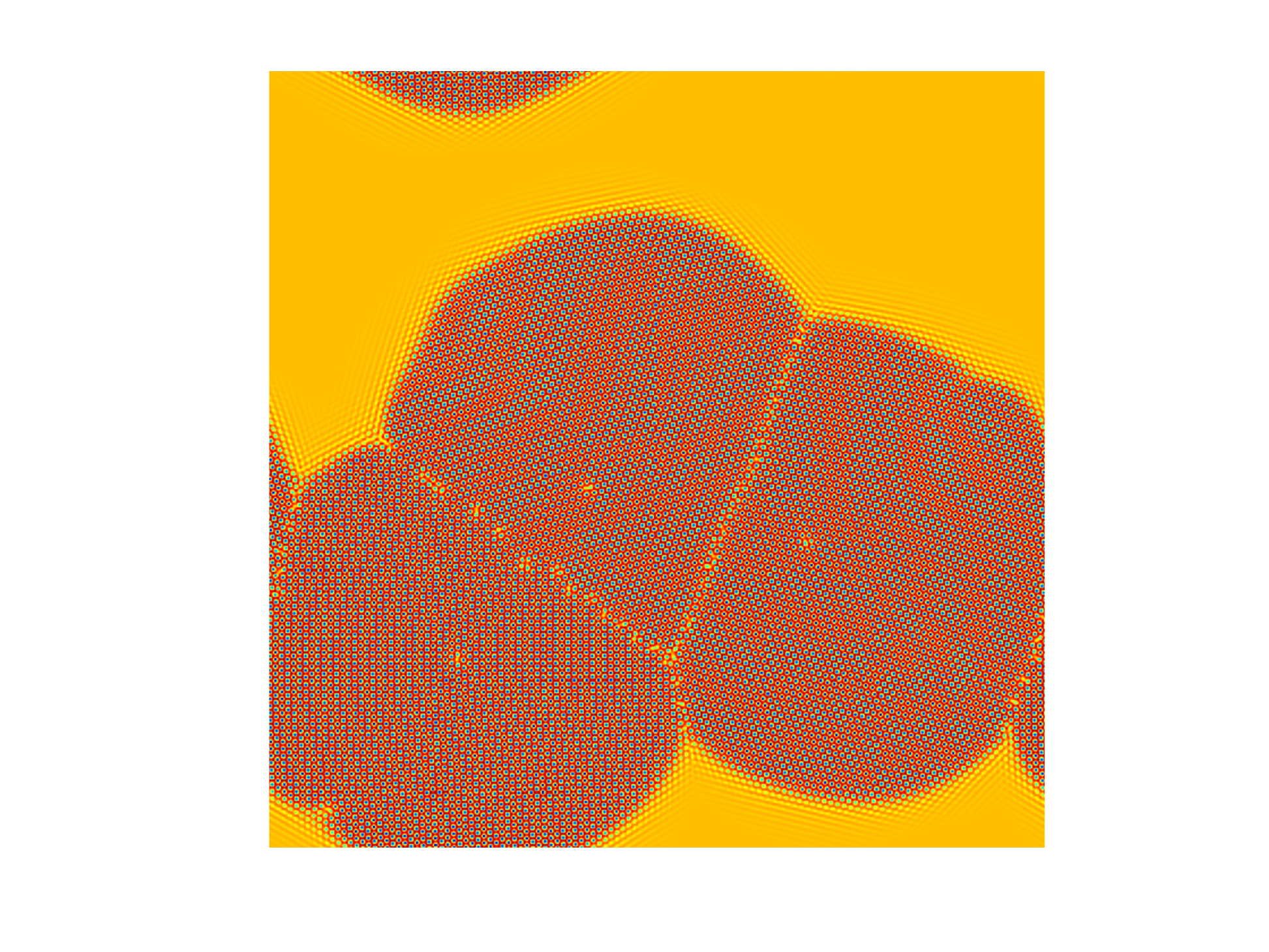}\hspace{-9mm}
	\includegraphics[width=4.7cm]{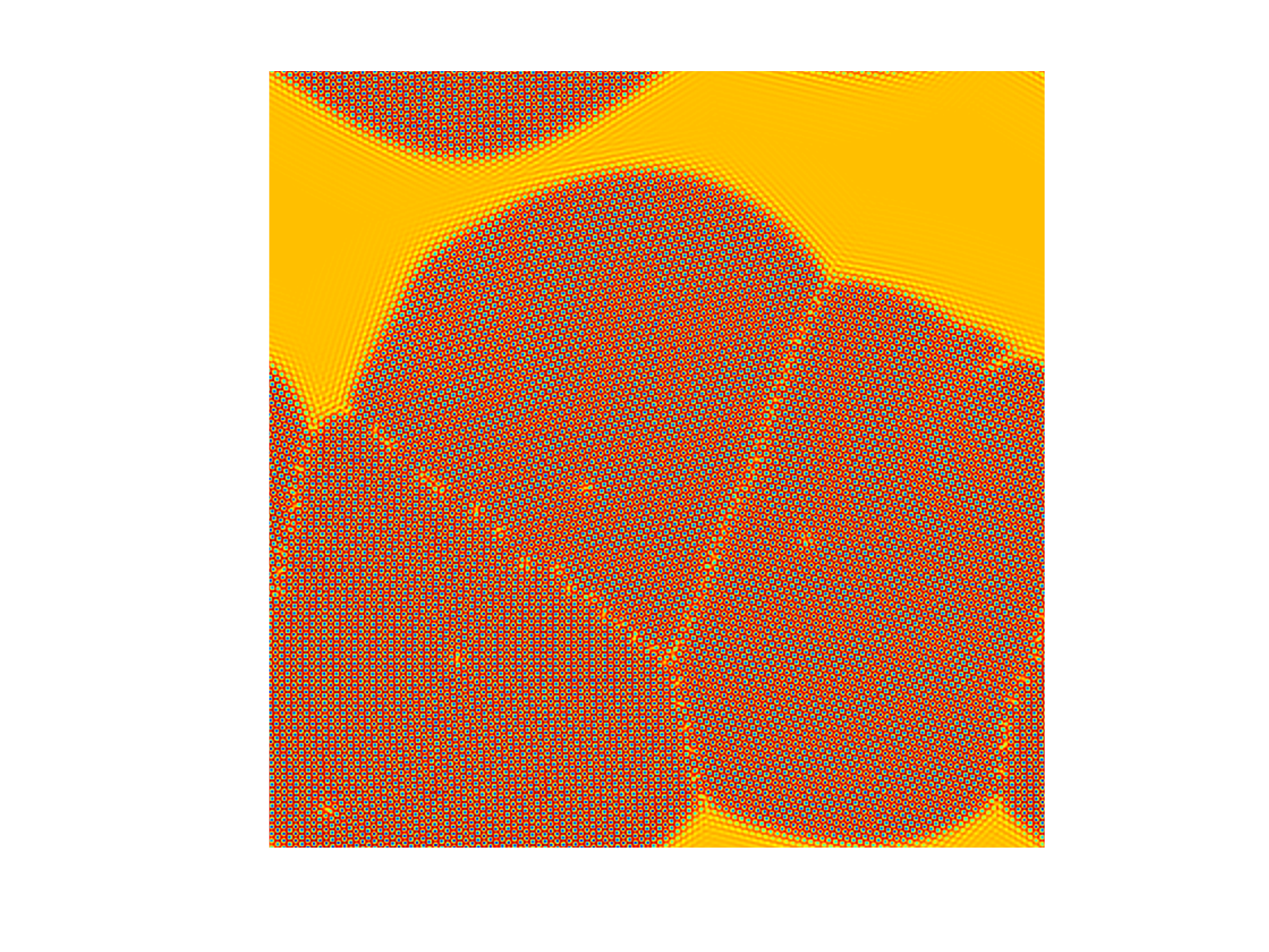}
}
\subfigure[profiles of $\phi$ at $T=800, 900, 1000, 2000$]{
   \hspace{-9mm}
	\includegraphics[width=4.7cm]{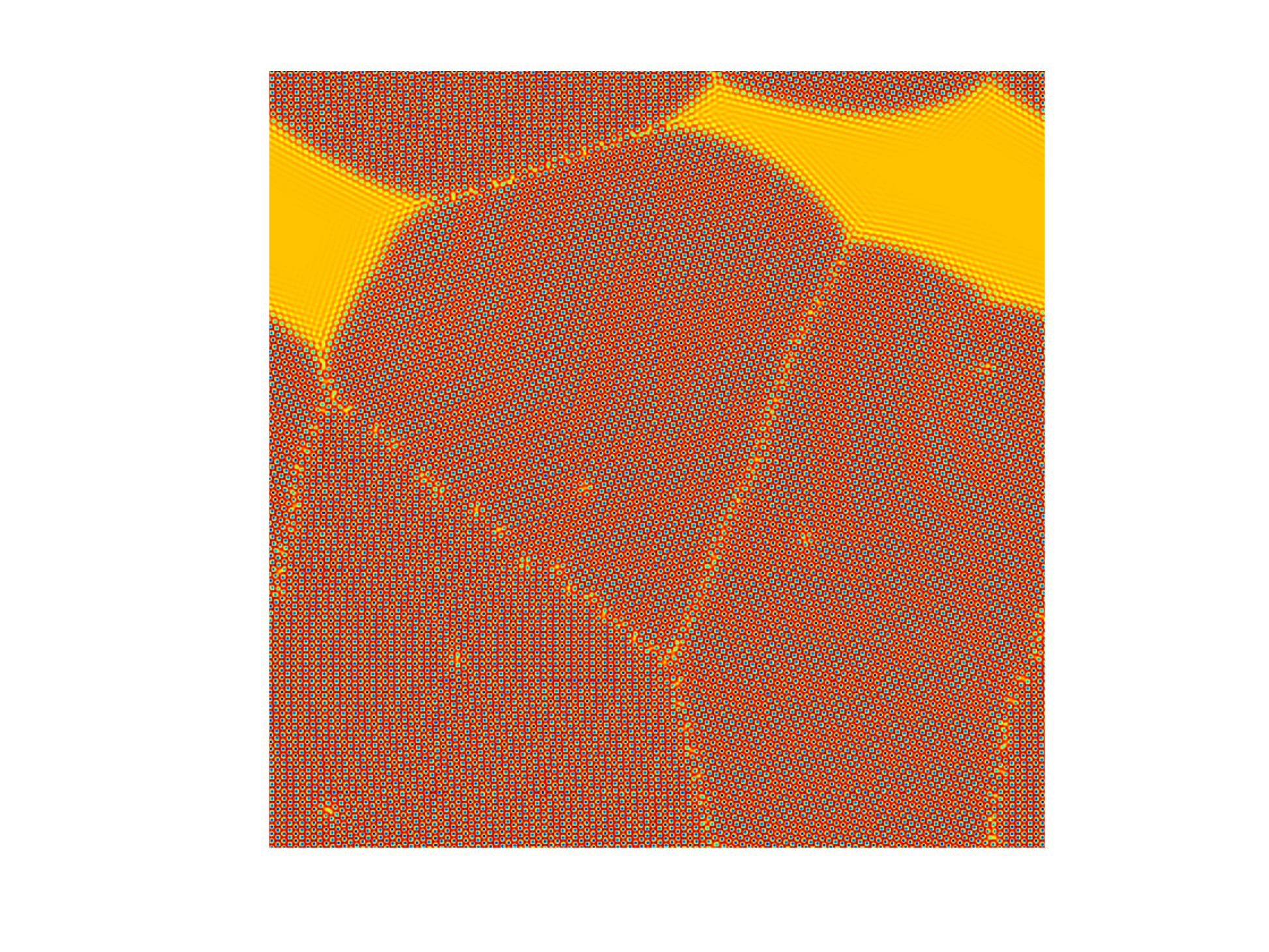}\hspace{-9mm}
	\includegraphics[width=4.7cm]{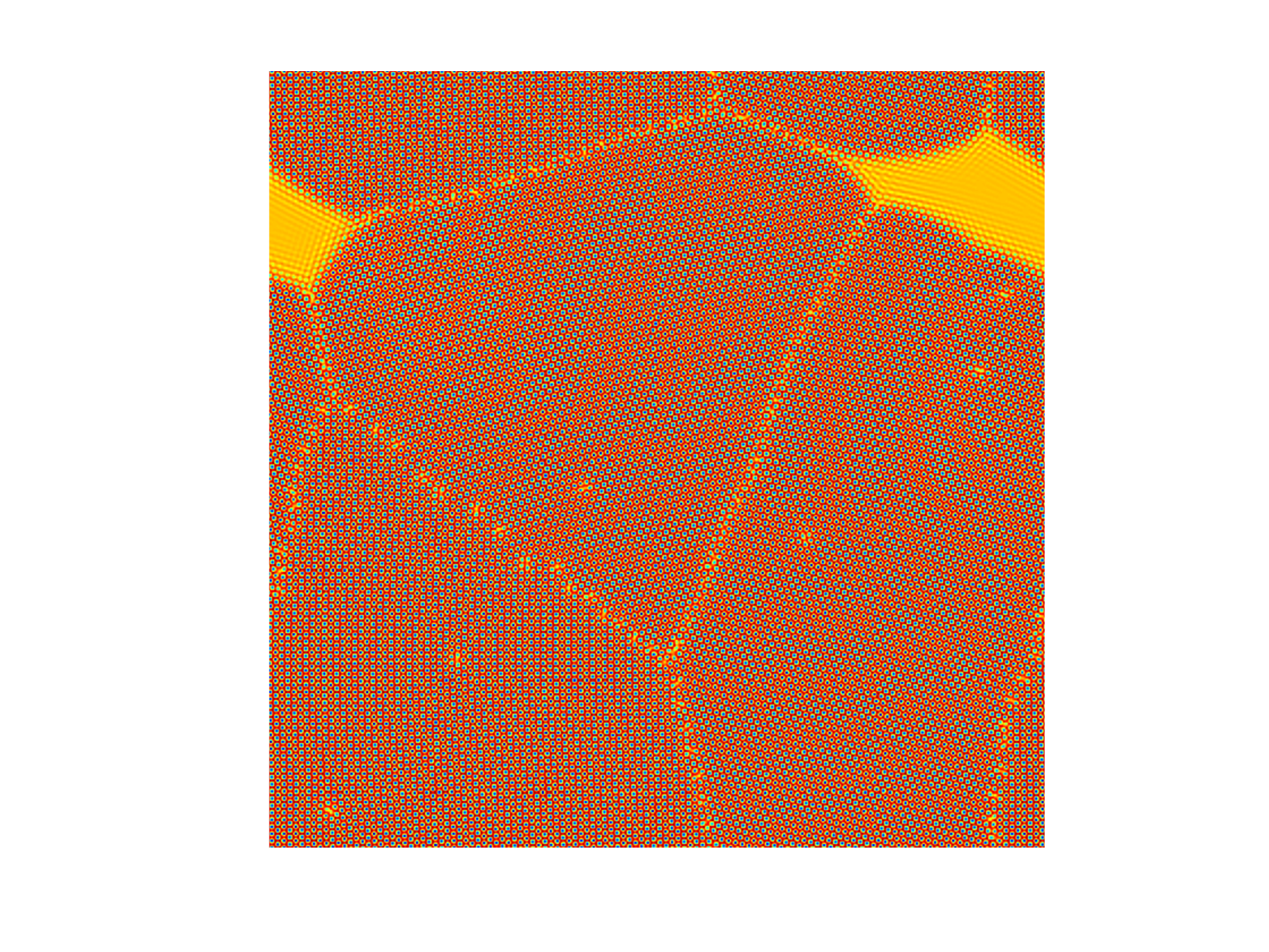}\hspace{-9mm}
	\includegraphics[width=4.7cm]{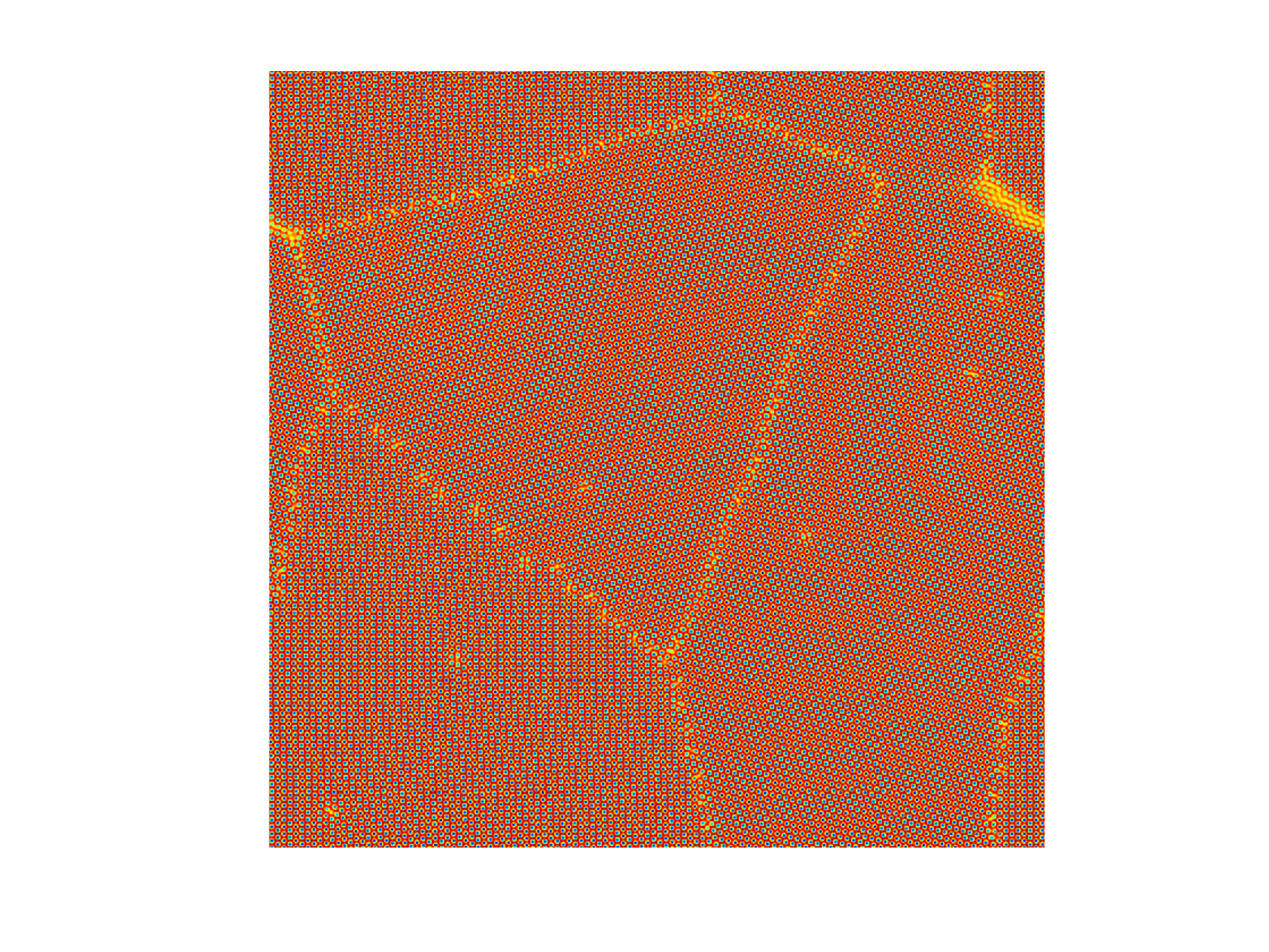}\hspace{-9mm}
	\includegraphics[width=4.7cm]{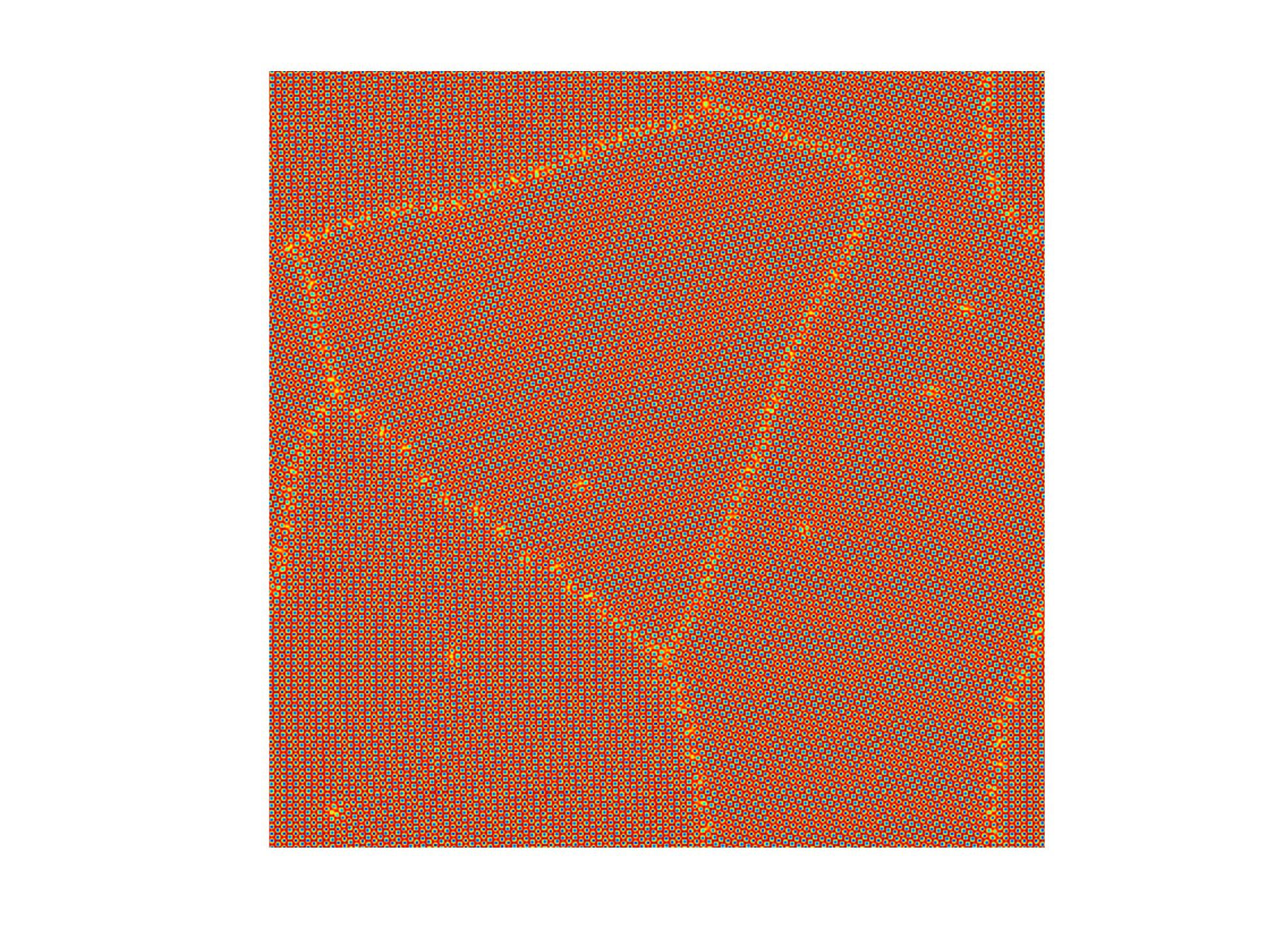}
}
\label{Fig:PFC-R-newSAV-rand}
\caption{Example 3A. Dynamics evolution of crystal growth in a supercooled liquid driven by the PFC equation using R-GSAV/BDF2 scheme. Snapshots of the numerical solution $\phi$ at $T=0, 100, 200, 300, 400, 500, 600, 700, 800, 900, 1000, 2000$, respectively.}
\end{figure}

{\em Case B.}  Phase transition behaviors in $3D$. 
We choose the initial data $\phi(x, y, t=0)=\bar{\phi}+0.01rand$ and computational domains $\left[0, 50\right]^{3}$. Other  parameters are chosen as $\epsilon=0.56, \delta=0.02, T=3000$ and $64^3$ Fourier modes. 
Fig.\,\ref{Fig:PFC-R-newSAV-rand-3D} shows the steady state
microstructure of the phase transition behavior for $\bar{\phi}=-0.20, -0.35$ and $-0.43$, respectively.
These results are also consistent with those in \cite{li2019efficient}.

\begin{figure}[htbp]
\centering
\subfigure[$\bar{\phi}=-0.2$]{
	\includegraphics[width=5.3cm]{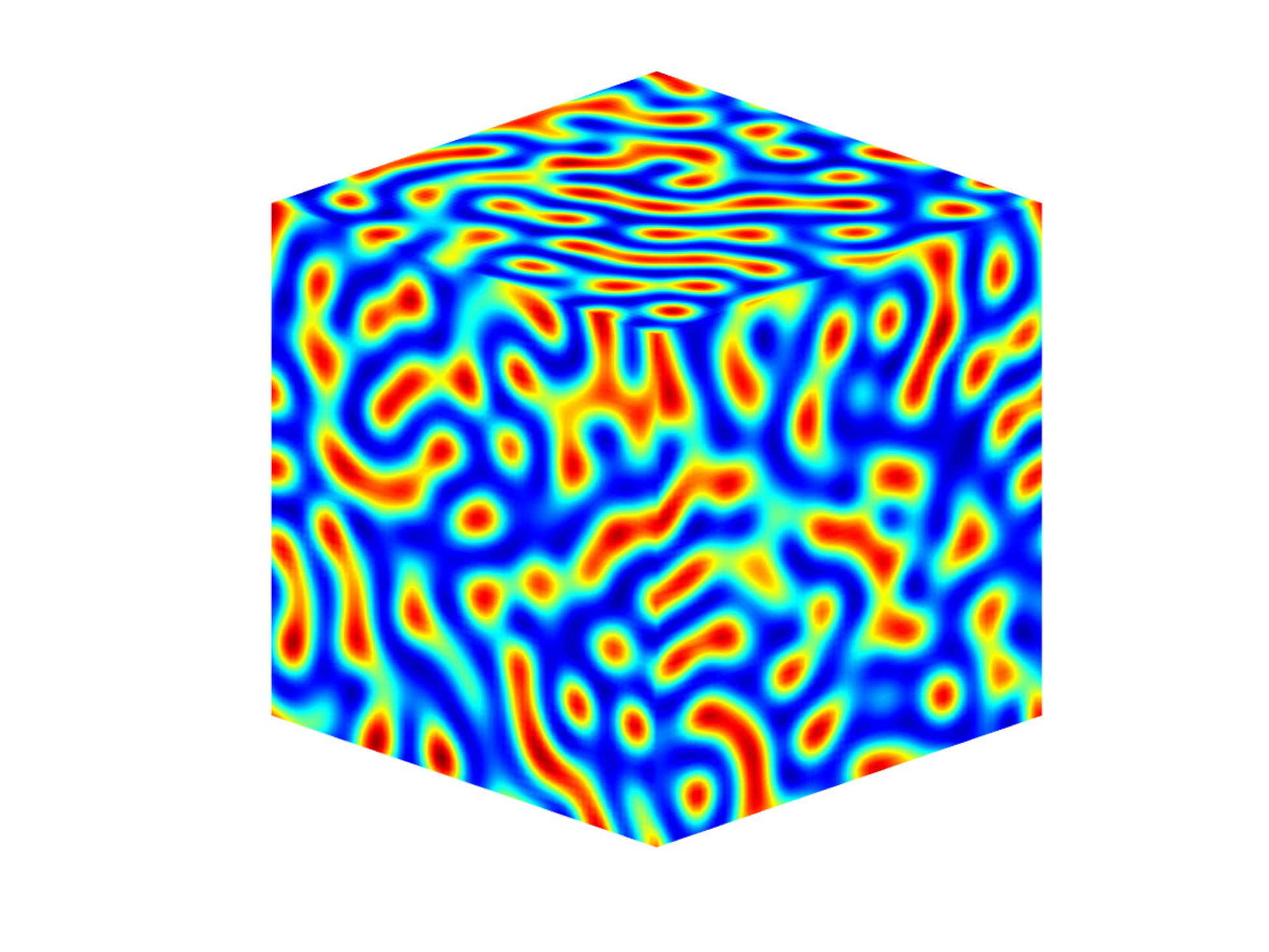}
	\includegraphics[width=5.3cm]{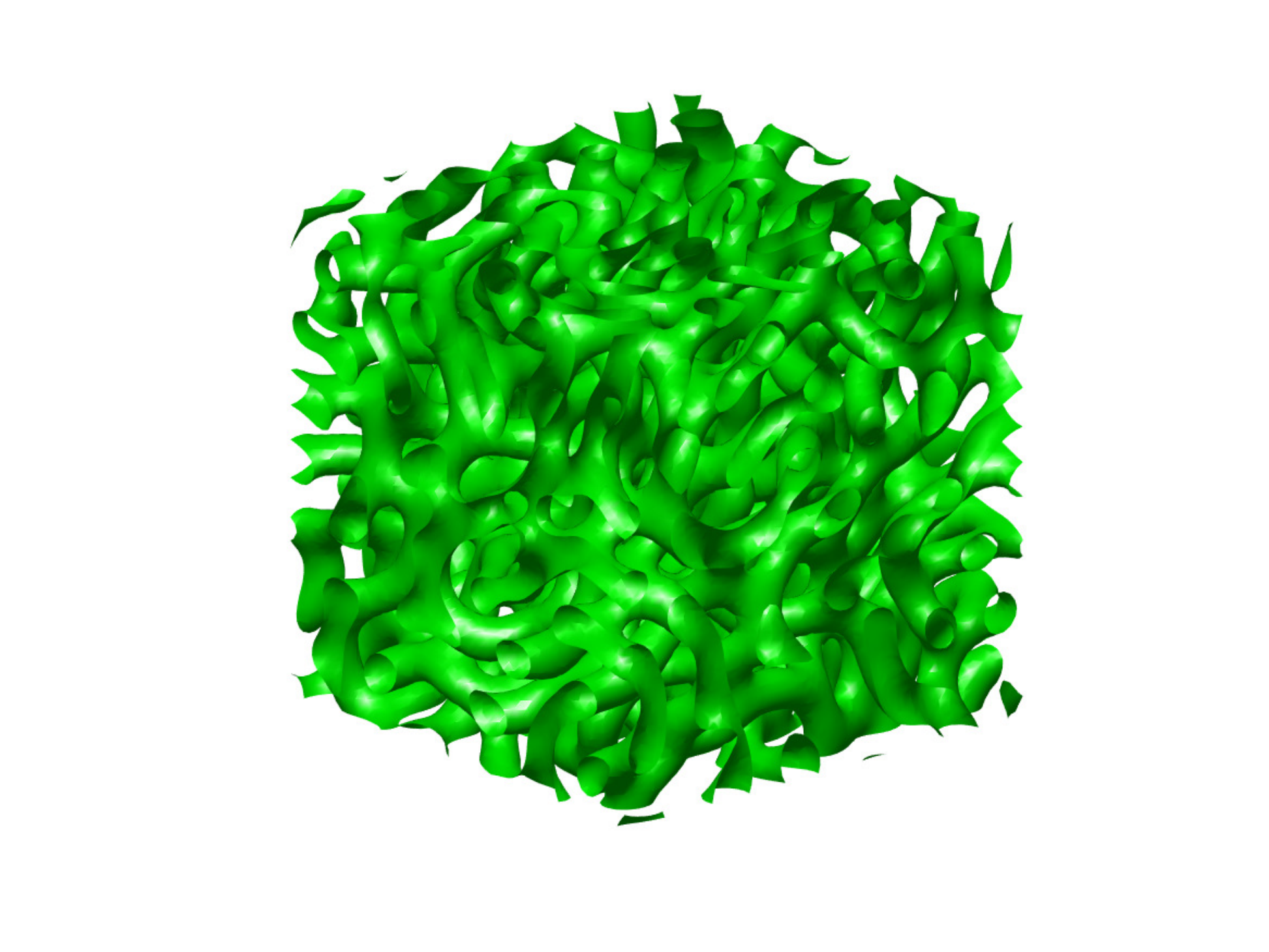}

} 
\subfigure[$\bar{\phi}=-0.35$]{
	\includegraphics[width=5.3cm]{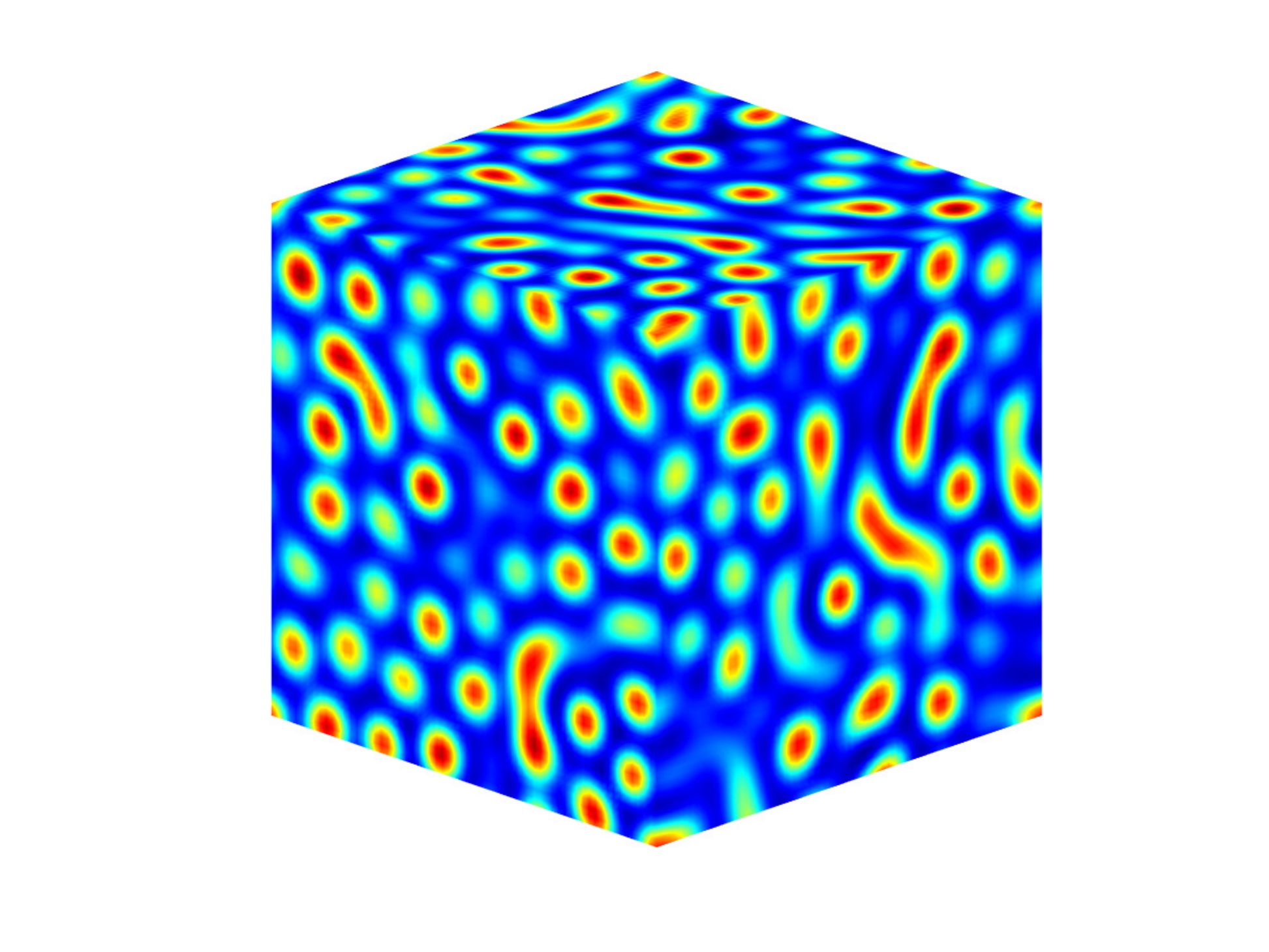}
	\includegraphics[width=5.3cm]{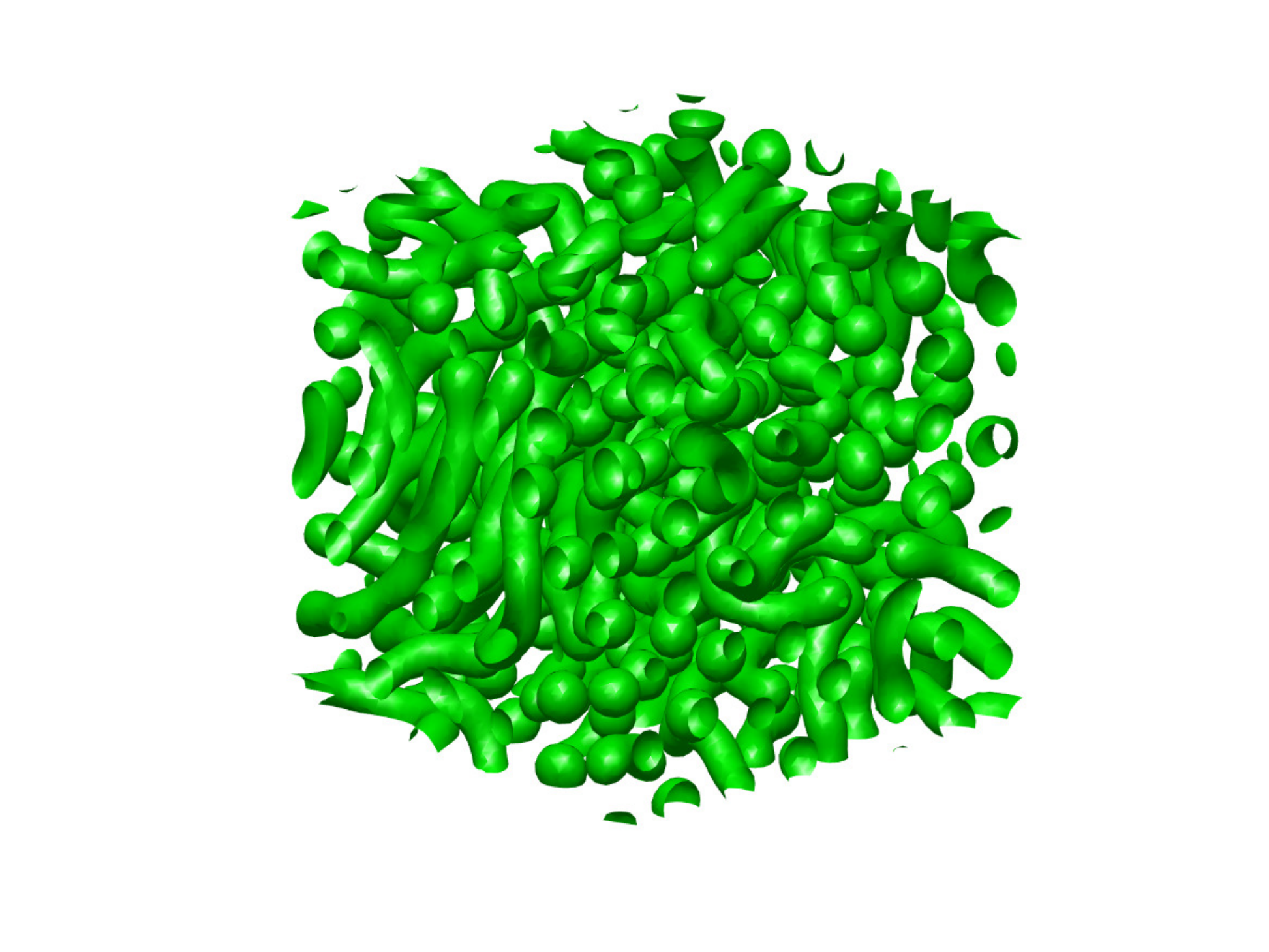}
}
\subfigure[$\bar{\phi}=-0.43$]{
	\includegraphics[width=5.3cm]{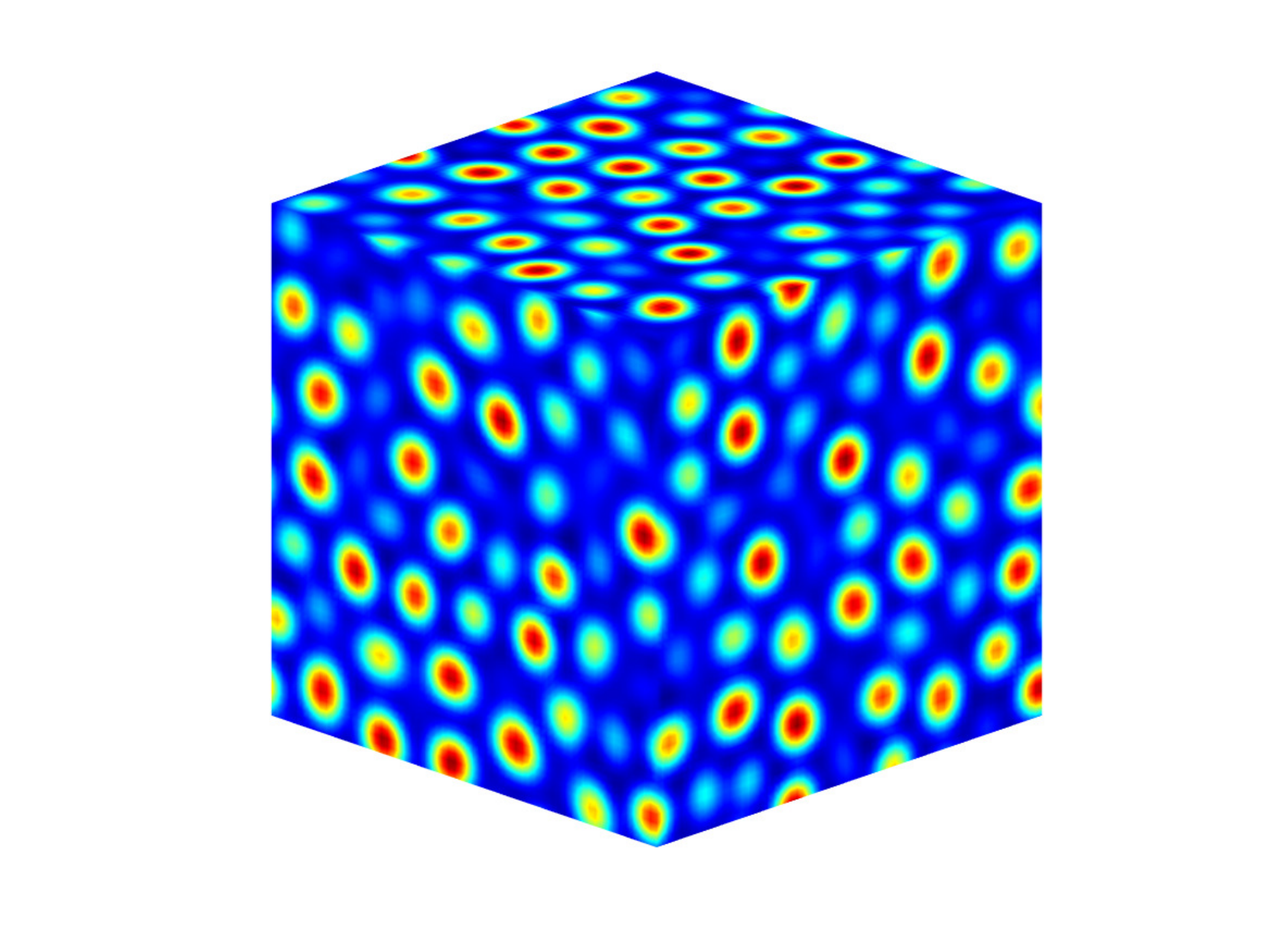}
	\includegraphics[width=5.3cm]{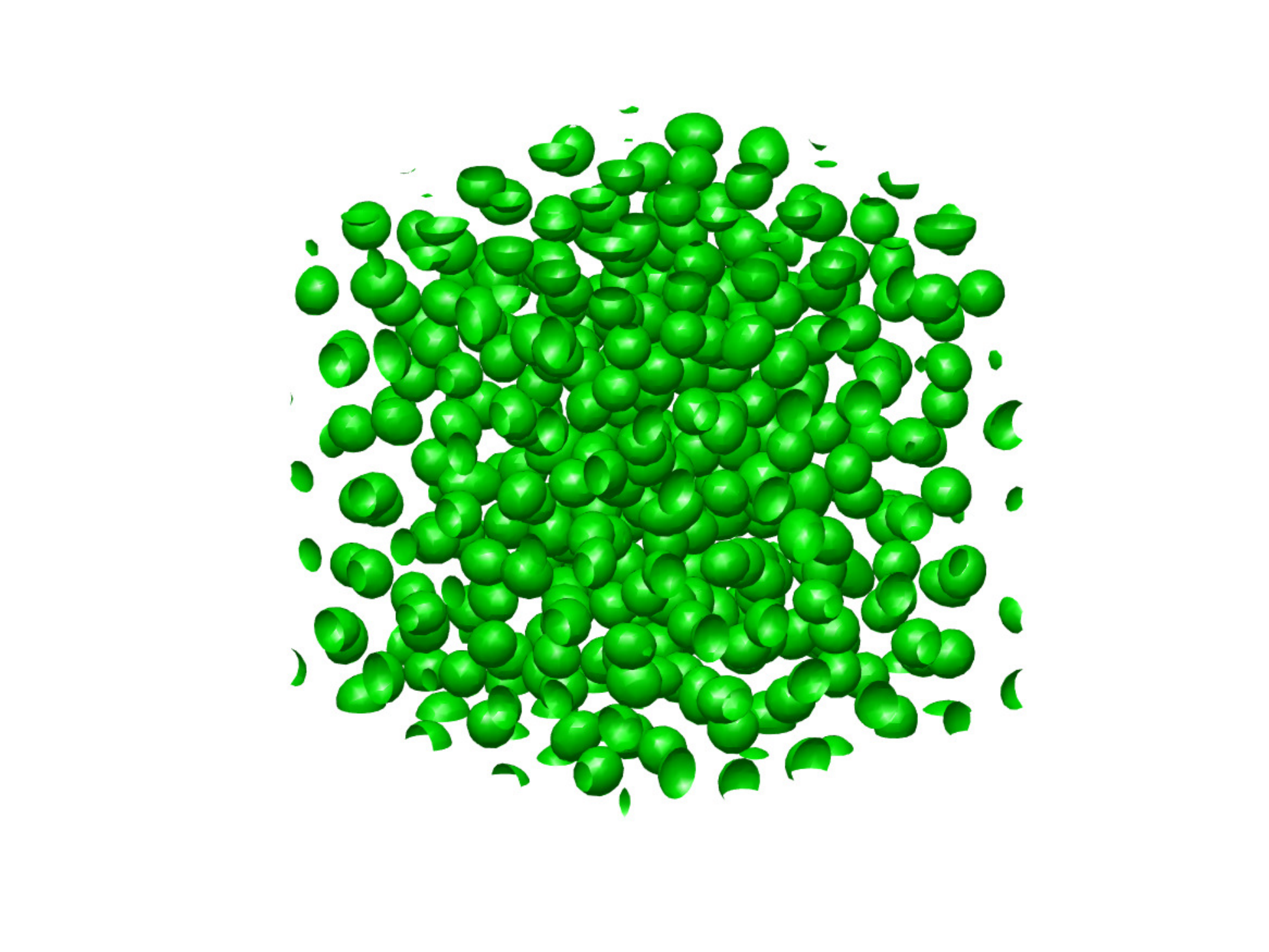}}
\label{Fig:PFC-R-newSAV-rand-3D}
\caption{Example 3B. Evolution of $\phi$ driven by the PFC equation using R-GSAV/BDF2 scheme. Snapshots of density field $\phi$ (left) and isosurface plots of $\phi=0$ (right) at $T=3000$.}
\end{figure}


\textbf{Example 4.} 
In this example, we  use the phase-field vesicle membrane (PFVM) model \cite{cheng2018multiple, cheng2020global} as an example to demonstrate how to construct relaxed MSAV schemes.

Since the vesicle membrane is area and volume preserving, we consider the following penalized free energy 
\begin{equation}
	E_{tot}(\phi)=E_{b}(\phi)+\frac{1}{2 \sigma_{1}}(A(\phi)-\alpha)^{2}+\frac{1}{2 \sigma_{2}}(B(\phi)-\beta)^{2},
\end{equation}
where $\sigma_{1}$ and $\sigma_{2}$ are two small parameters, and $\alpha, \beta$ represent the initial volume and surface area, the bending energy $E_b(\phi)$, volume $A(\phi)$ and surface area $B(\phi)$ of the vesicle are defined by 
\begin{equation}
	E_{b}(\phi)=\frac{\epsilon}{2} \int_{\Omega}\left(-\Delta \phi+\frac{1}{\epsilon^{2}} G(\phi)\right)^{2} \mathrm{d} \boldsymbol{x}=\frac{\epsilon}{2} \int_{\Omega} w^{2} \mathrm{d} \boldsymbol{x},
\end{equation}
\begin{equation}
	A(\phi)=\int_{\Omega}(\phi+1) \mathrm{d} \boldsymbol{x} \quad \text { and } \quad B(\phi)=\int_{\Omega}\left(\frac{\epsilon}{2}|\nabla \phi|^{2}+\frac{1}{\epsilon} F(\phi)\right) \mathrm{d} \boldsymbol{x},
\end{equation}
where
$$
w:=-\Delta \phi+\frac{1}{\epsilon^{2}} G(\phi), \quad G(\phi):=F^{\prime}(\phi), \quad F(\phi)=\frac{1}{4}\left(\phi^{2}-1\right)^{2}.
$$
Then,  the $L^2$ gradient flow associated with the above free energy is
\begin{equation}\label{eq:PFVM-model}
	\left\{
	\begin{array}{l}\phi_{t}=-M \mu, \\ 
		\mu=-\epsilon \Delta w+\frac{1}{\epsilon} G^{\prime}(\phi) w+\frac{1}{\sigma_{1}}(A(\phi)-\alpha)+\frac{1}{\sigma_{2}}(B(\phi)-\beta)\left(-\epsilon \Delta \phi+\frac{1}{\epsilon} F^{\prime}(\phi)\right), \\ 
		w=-\Delta \phi+\frac{1}{\epsilon^{2}} G(\phi),
	\end{array} \right.
\end{equation}
with the boundary conditions being
\begin{equation} \label{eq:PFVM-BC}
	\text { (i) periodic or (ii) }\left.\partial_{\mathbf{n}} \phi\right|_{\partial \Omega}=\left.\partial_{\mathbf{n}} \Delta \phi\right|_{\partial \Omega}=0,
\end{equation}
and $M$ is the mobility constant. 
Then, one can easily see  that the system \eqref{eq:PFVM-model} admits the following energy law
\begin{equation}
	\frac{\mathrm{d}}{\mathrm{d} t} E_{t o t}(\phi)=-M\|\mu\|^{2}.
\end{equation}
We observe that \eqref{eq:PFVM-model} contains small parameters $\epsilon,\,\sigma_1,\,\sigma_2$, but $\sigma_1$ is only associated with the linear non-local term $A(\phi)$ so it can be treated implicitly. Hence, we need to introduce two SAVs to deal with the nonlinear terms associated  with $\epsilon$ and $\sigma_2$ separately. More precisely, we set
\begin{eqnarray}
	E_{1}(\phi)=E_b(\phi),\quad E_{2}(\phi):=\frac{1}{2 \sigma_{2}}(B(\phi)-\beta)^{2}.
\end{eqnarray}
Then, we can apply the R-MGSAV scheme  \eqref{eq:R-MGSAV-BDFk-1}-\eqref{eq:R-MGSAV-BDFk-9} directly.

We consider the phase-field vesicle membrane model \eqref{eq:PFVM-model} in $\Omega=(-\pi, \pi)^{3}$ with  $\epsilon=\frac{6\pi}{128}, M=1$ and $\sigma_{1}=\sigma_{2}=0.01$. We use  the  R-MSAV/BDF$2$ scheme with  $\delta t=1e-4$ and $128^3$ Fourier modes.

{\em Case A. } We first simulate collision of two close-by spherical vesicles by consider the following initial condition for $\phi$ to describe two close-by spherical vesicles in 3D
\begin{equation}
\begin{aligned} 
\phi(x, y, z, 0) &=\tanh \left(\frac{0.28 \pi-\sqrt{x^{2}+y^{2}+(z-0.35 \pi)^{2}}}{\sqrt{2} \epsilon}\right) \\ 
&+\tanh \left(\frac{0.28 \pi-\sqrt{x^{2}+y^{2}+(z+0.35 \pi)^{2}}}{\sqrt{2} \epsilon}\right)+1.
\end{aligned}
\end{equation}
We depict the collision process in Fig.\,\ref{Fig:PFVM-R-newSAV-two-balls}.
We observe that two spheres connect within a small time interval, then merge into a capsule shape which is a steady state.   The results are consistent with those presented in \cite{cheng2018multiple}.

{\em Case B. } We start with six closeby spheres as initial condition given by
\begin{equation}
\phi(x, y, z, 0)=\sum_{i=1}^{6} \tanh \left(\frac{r_{i}-\sqrt{\left(x-x_{i}\right)^{2}+\left(y-y_{i}\right)^{2}+\left(z-z_{i}\right)^{2}}}{\sqrt{2} \epsilon}\right)+5,
\end{equation}
where $r_{i}=\frac{\pi}{6}, z_{i}=0$ for $i=1,2, \ldots, 6$,  $\left(x_{1}, x_{2}, x_{3}, x_{4}, x_{5}, x_{6}\right)=\left(-\frac{\pi}{4}, \frac{\pi}{4}, 0, \frac{\pi}{2},-\frac{\pi}{2}, 0\right)$, and \\
$\left(y_{1}, y_{2}, y_{3}, y_{4}, y_{5}, y_{6}\right)=\left(-\frac{\pi}{4},-\frac{\pi}{4}, \frac{\pi}{4}, \frac{\pi}{4}, \frac{\pi}{4},-\frac{3 \pi}{4}\right)$. 

In Fig.\,\ref{Fig:PFVM-R-newSAV-six-balls}, we plot snapshots of iso-surfaces $\phi=0$ at $t = 0, 0.02, 0.1, 0.5, 2$ by using the R-MGSAV/BDF$2$ scheme. 
It shows that the initially separated spheres connect with each other gradually and finally merge into a big vesicle with two small holes in upper and lower parts respectively. 
The results are also consistent with those presented in \cite{cheng2020global}.




\begin{figure}[htbp]
\centering
\subfigure[profiles of $\phi=0$ at $T=0, 0.02, 0.1$]{
	\includegraphics[width=5.3cm]{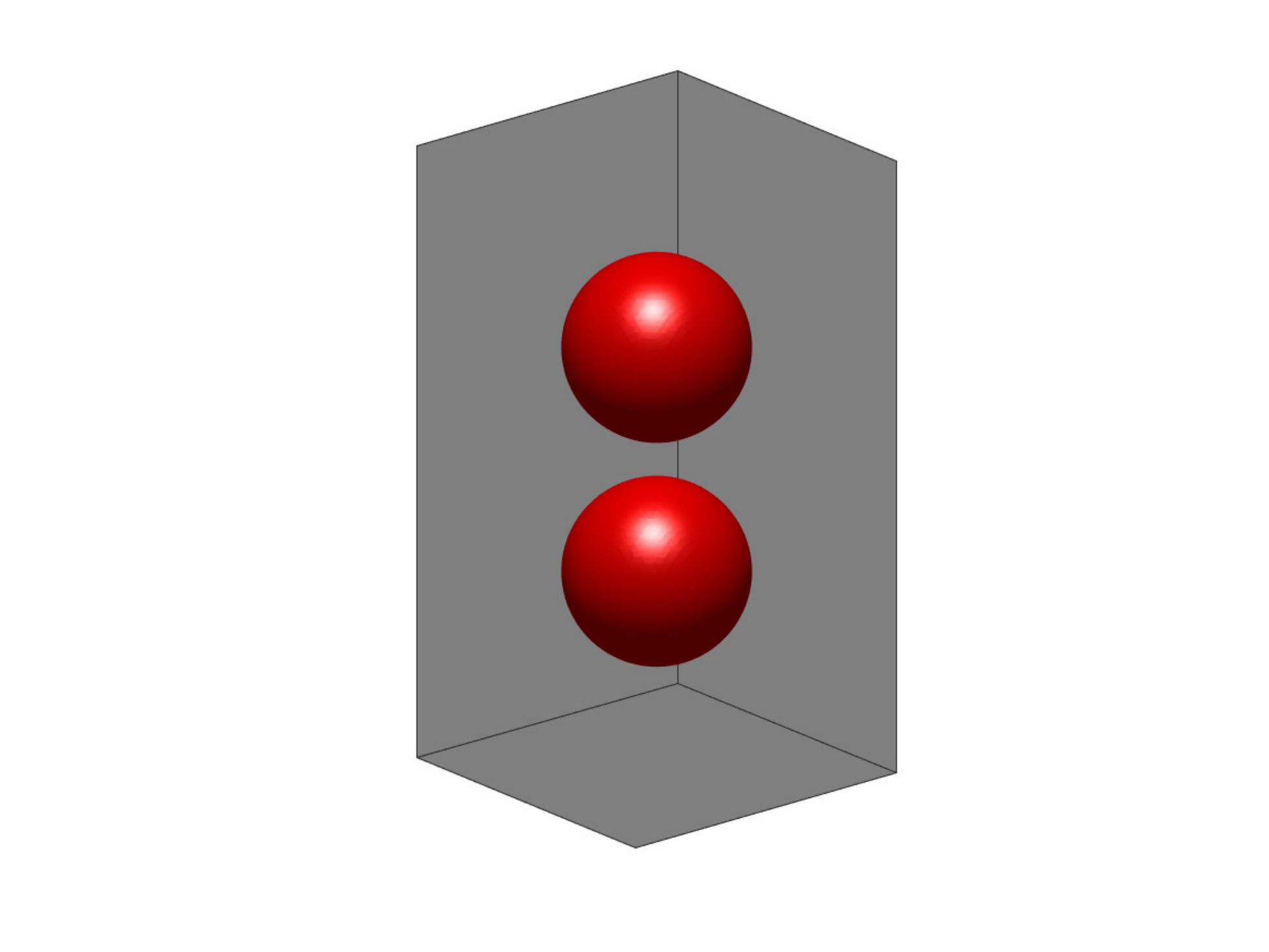}\hspace{-9mm}
	\includegraphics[width=5.3cm]{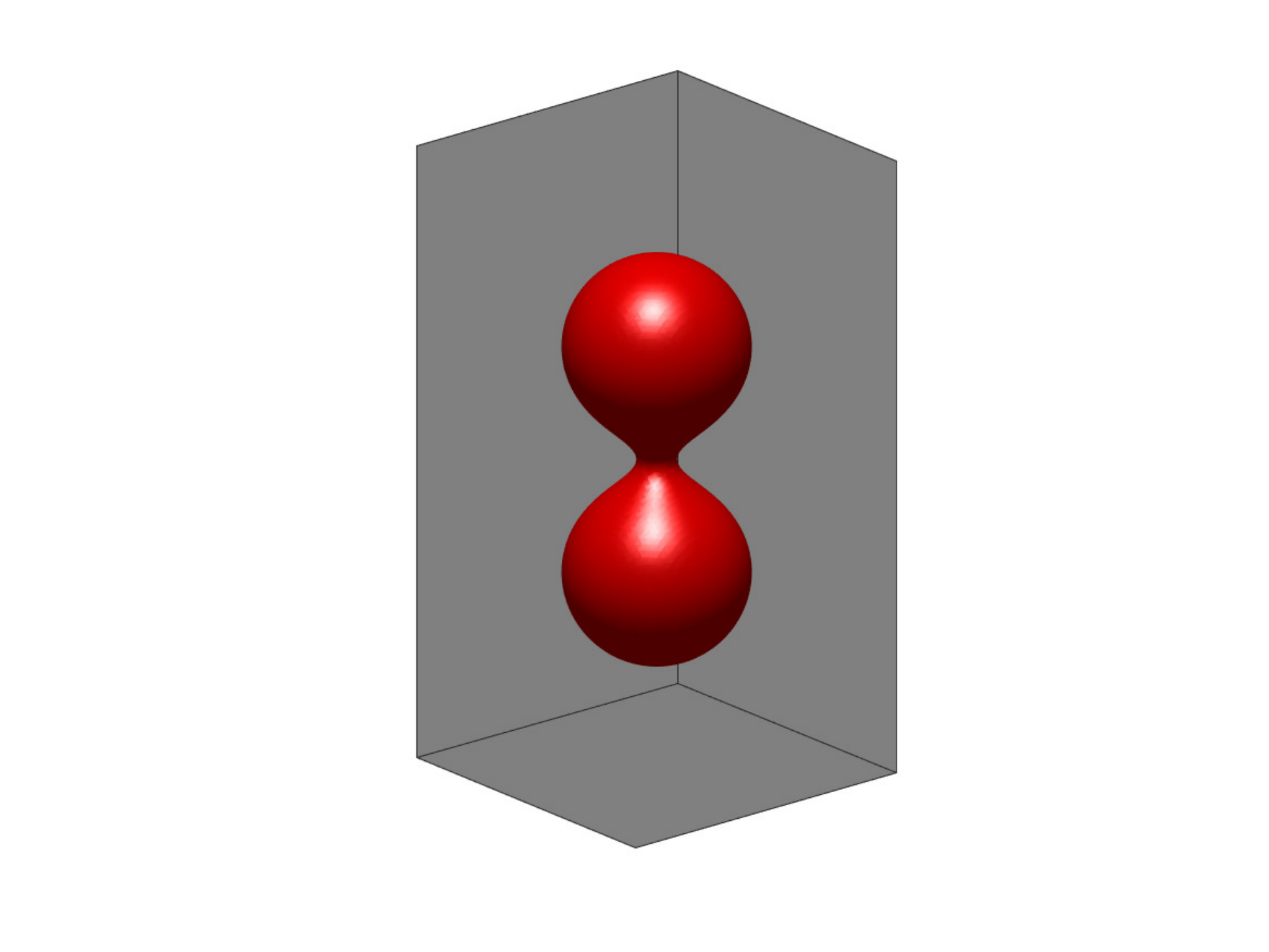}\hspace{-9mm}
	\includegraphics[width=5.3cm]{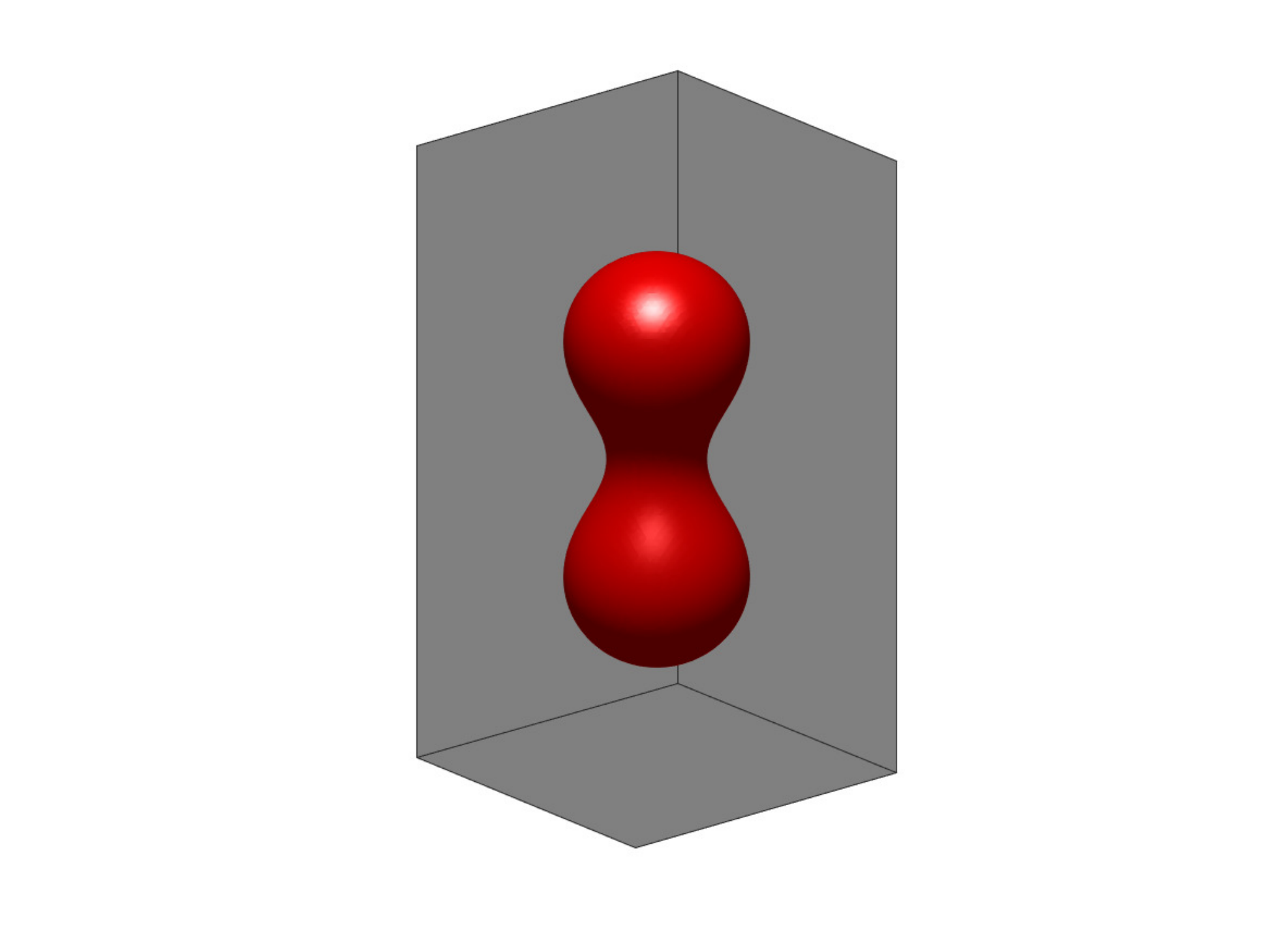}
} 
\subfigure[profiles of $\phi=0$ at $T=0.5, 1, 2$]{
	\includegraphics[width=5.3cm]{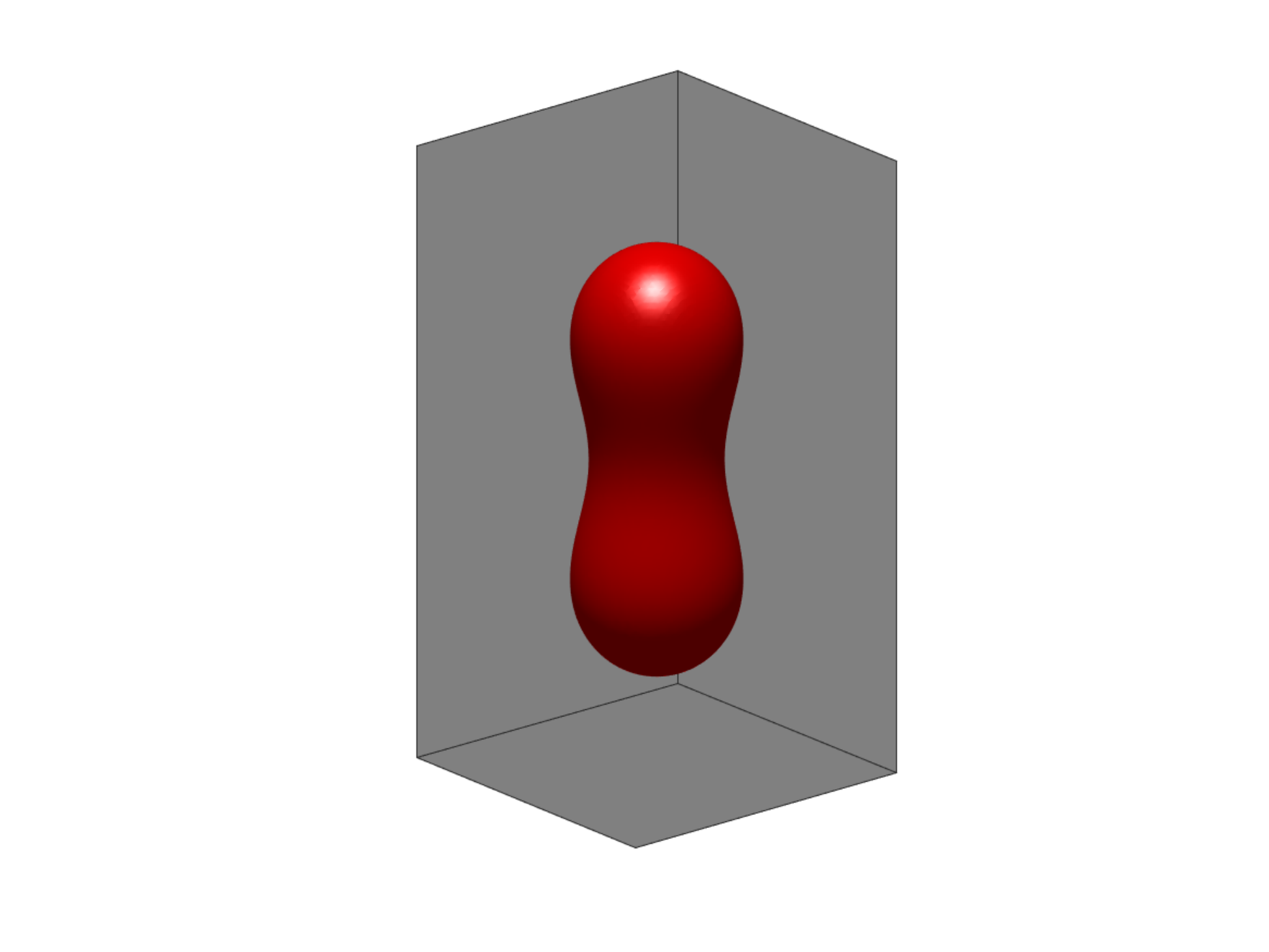}\hspace{-9mm}
	\includegraphics[width=5.3cm]{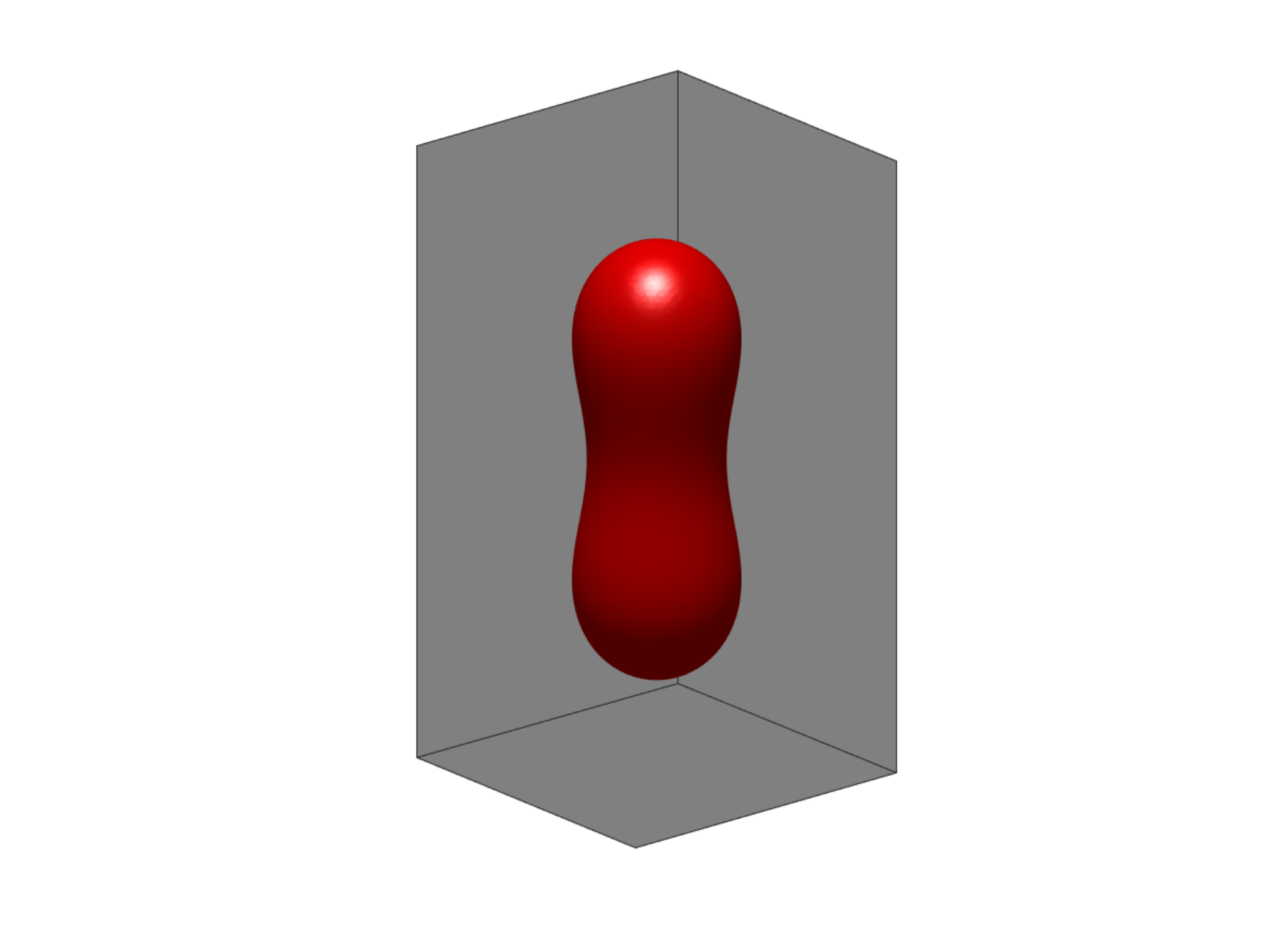}\hspace{-9mm}
	\includegraphics[width=5.3cm]{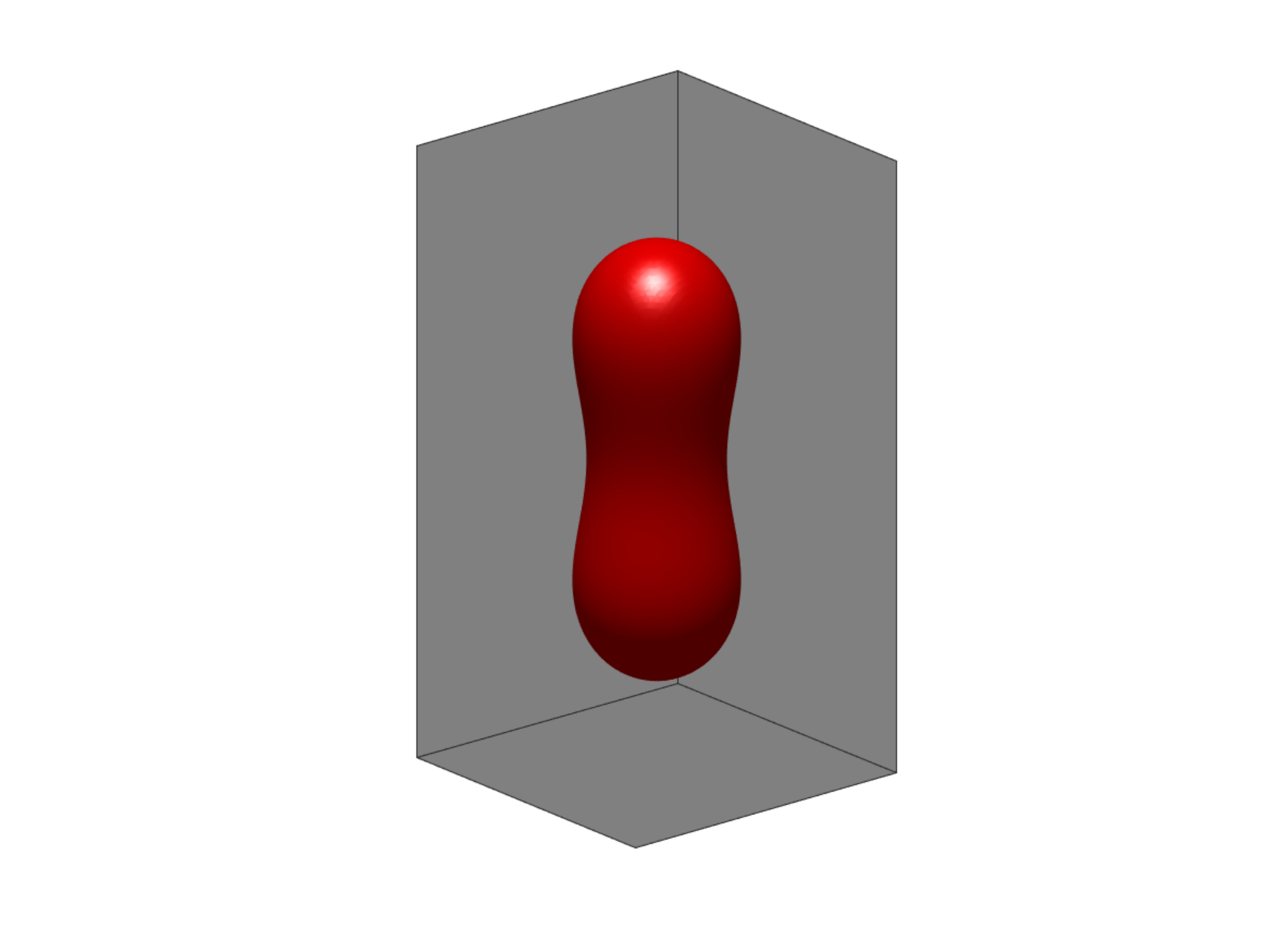}
}
\label{Fig:PFVM-R-newSAV-two-balls}
\caption{Example 4A. Collision of two close-by spherical vesicles: Snapshots of iso-surfaces of $\phi=0$ driven by the PFVM equation at $t=0, 0.02, 0.1, 0.5, 1, 2$.}
\end{figure}

\begin{figure}[htbp]
\centering
\subfigure[profiles of $\phi=0$ at $T=0, 0.02, 0.1$]{
	\includegraphics[width=5.3cm]{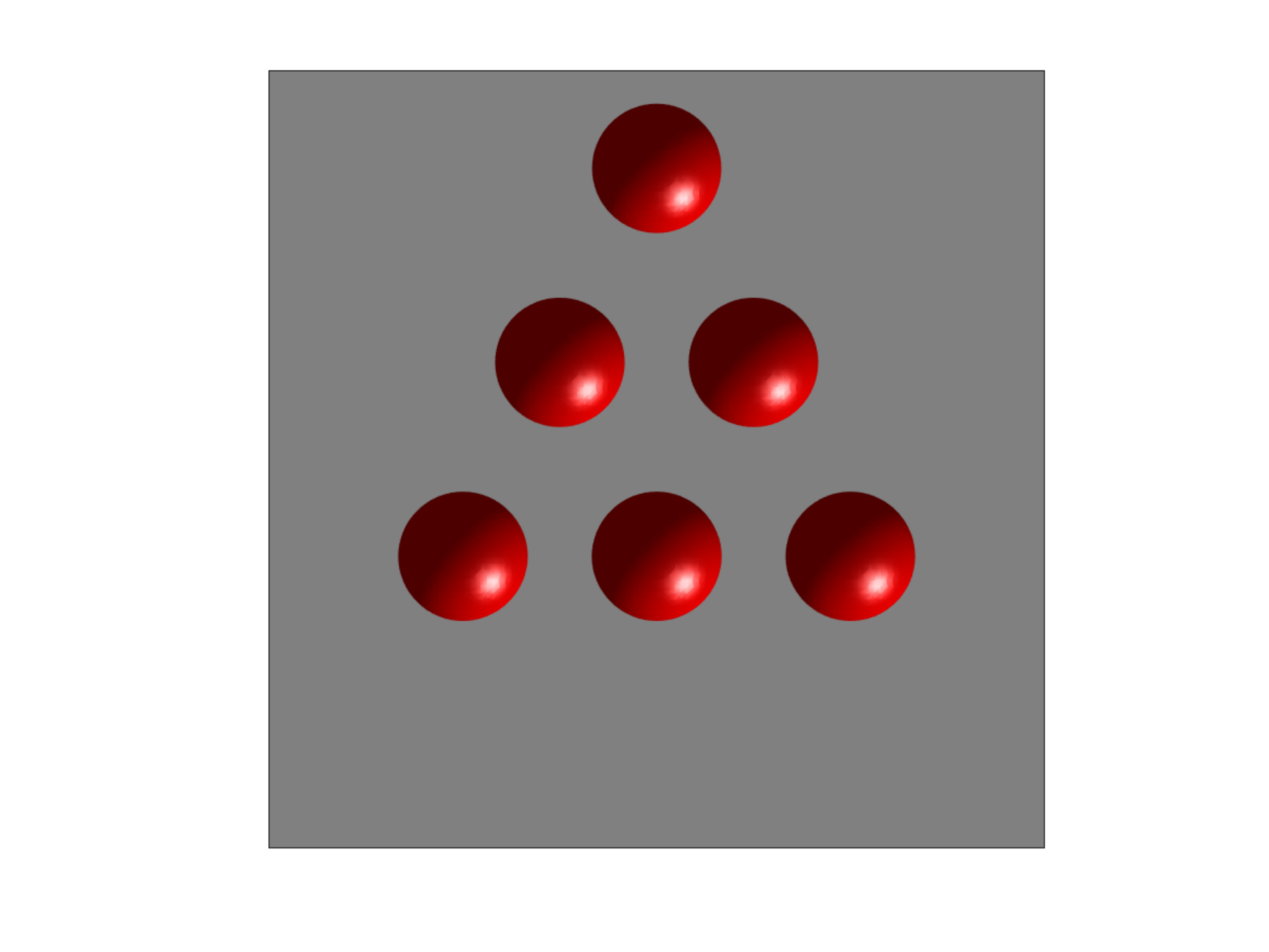}\hspace{-9mm}
	\includegraphics[width=5.3cm]{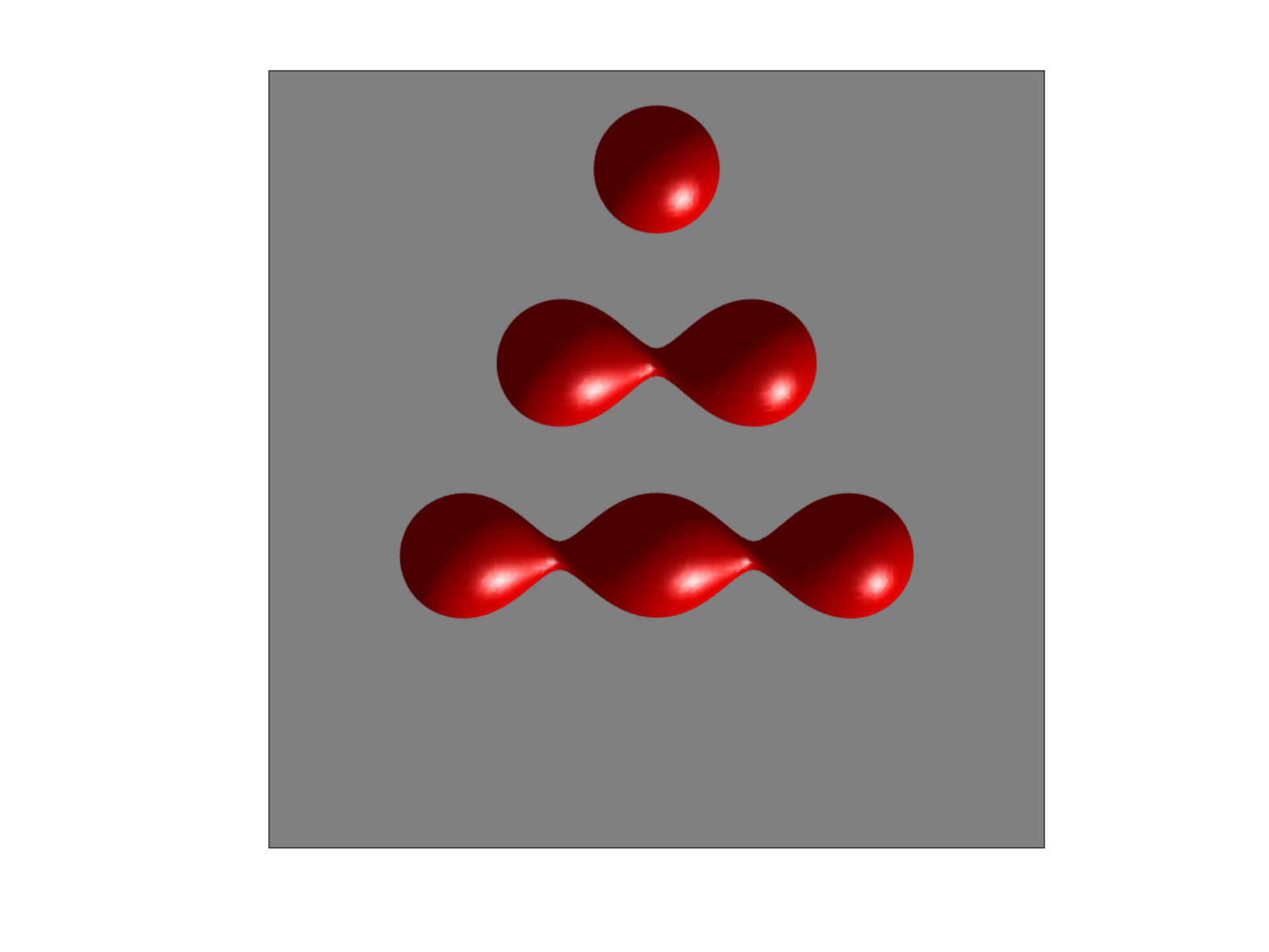}\hspace{-9mm}
	\includegraphics[width=5.3cm]{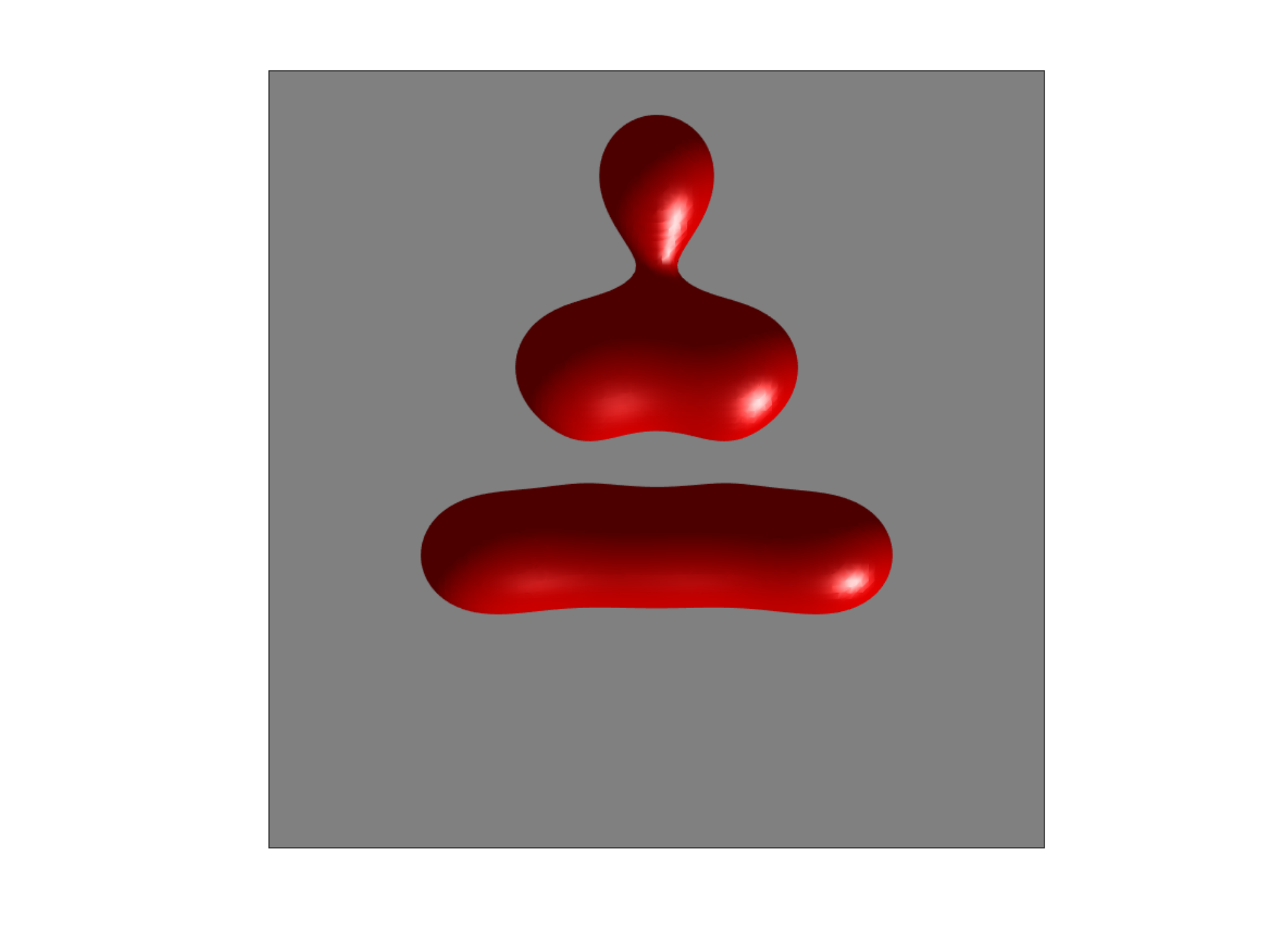}
} 
\subfigure[profiles of $\phi=0$ at $T=0.5, 1, 2$]{
	\includegraphics[width=5.3cm]{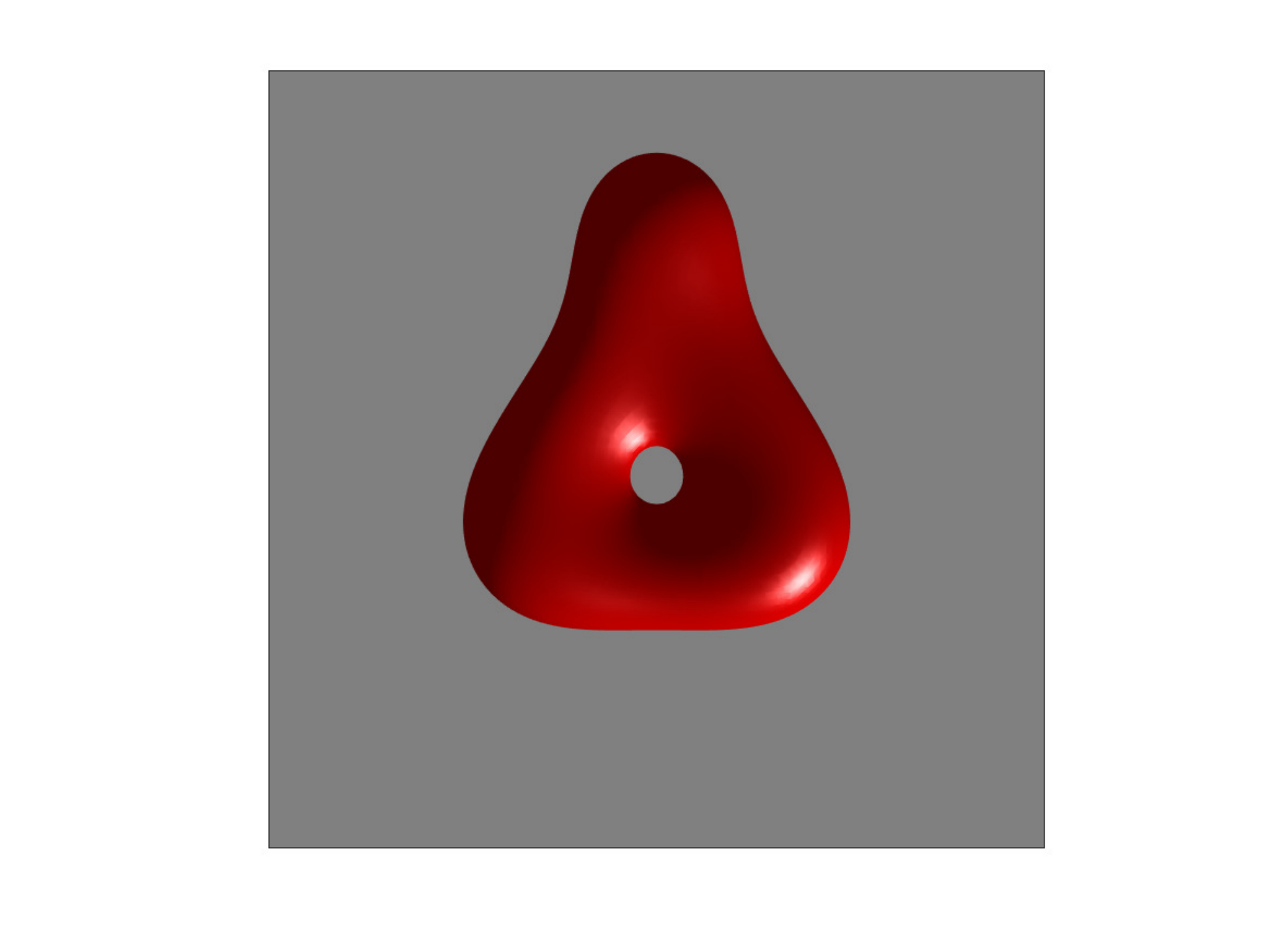}\hspace{-9mm}
	\includegraphics[width=5.3cm]{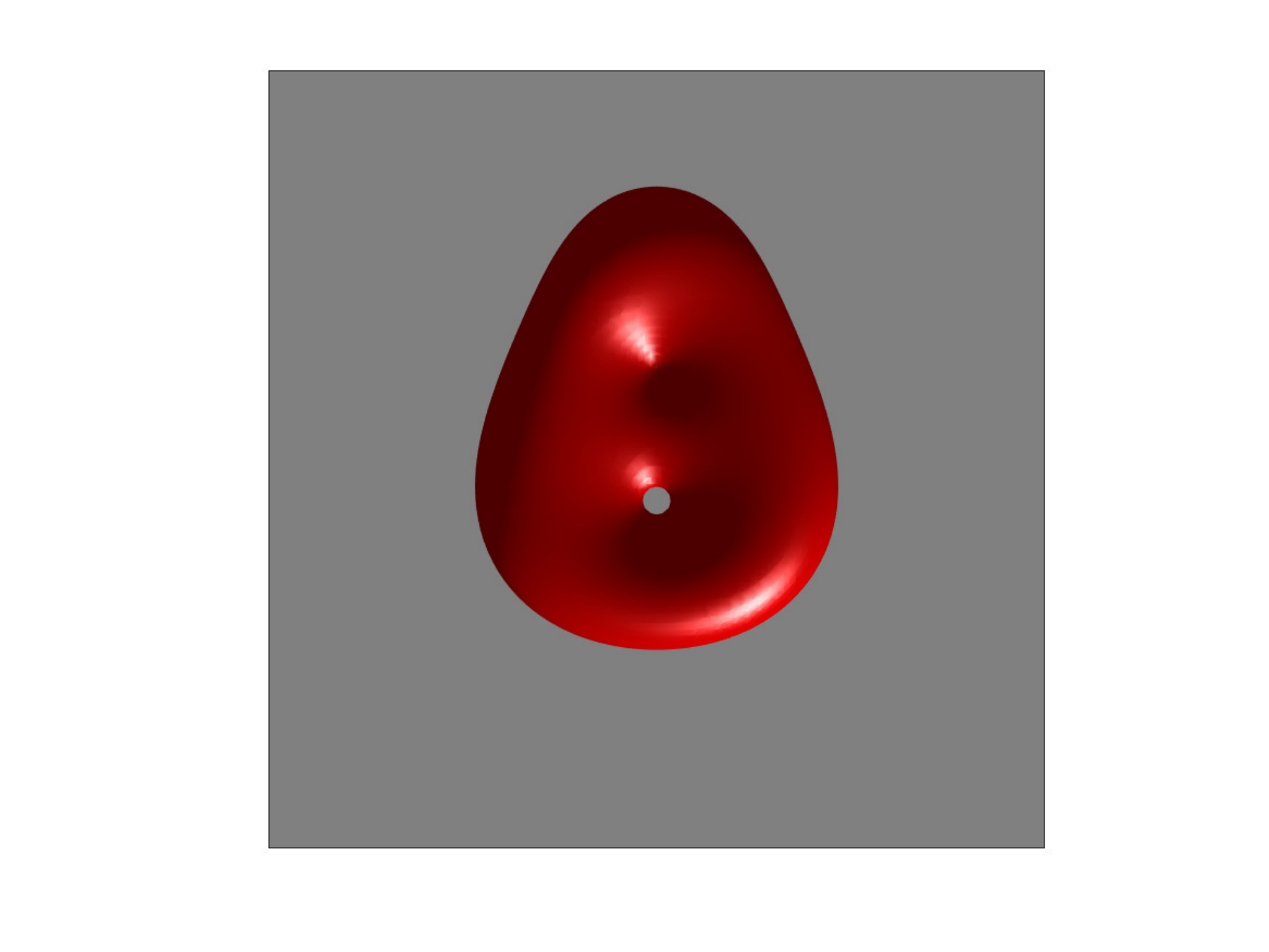}\hspace{-9mm}
	\includegraphics[width=5.3cm]{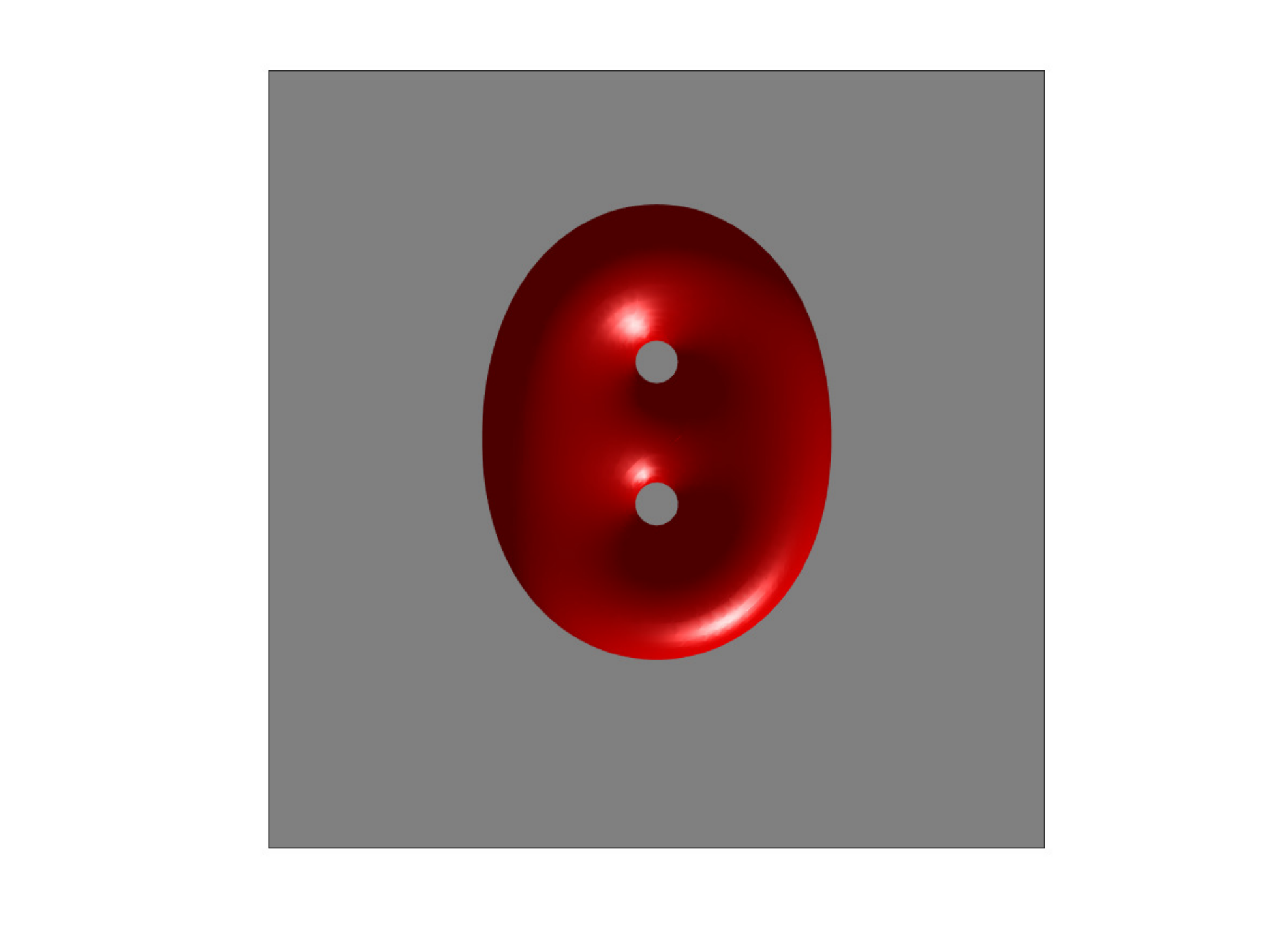}
}
\label{Fig:PFVM-R-newSAV-six-balls}
\caption{Example 4B. Collision of six close-by spherical vesicles. Snapshots of iso-surfaces of $\phi=0$ driven by the PFVM equation at $t=0, 0.02, 0.1, 0.5, 1, 2$.}
\end{figure}
Finally, we note that for the simulations in Examples 4-6, the relaxation parameter $\zeta_0$ is zero at all times.
This indicates that, at least  for these simulations, the modified energy is in fact  equal to the original energy, which means that the R-GSAV schemes are effectively energy stable with the original energy. 

In summary, the R-GSAV approach fixes a flaw in the GSAV approach and leads to more robust and accurate numerical schemes while keeping the simplicity, efficiency and generality of the GSAV approach.

\bibliographystyle{plain}
\bibliography{myref}

\begin{thebibliography}{10}

\bibitem{MR4033693}
Georgios Akrivis, Buyang Li, and Dongfang Li.
\newblock Energy-decaying extrapolated {RK}-{SAV} methods for the
  {A}llen-{C}ahn and {C}ahn-{H}illiard equations.
\newblock {\em SIAM Journal on Scientific Computing}, 41(6):A3703--A3727, 2019.

\bibitem{MR4239814}
Georgios Akrivis and Dongfang Li.
\newblock Structure-preserving {G}auss methods for the nonlinear
  {S}chr\"{o}dinger equation.
\newblock {\em Calcolo. A Quarterly on Numerical Analysis and Theory of
  Computation}, 58(2):Paper No. 17, 25, 2021.

\bibitem{antoine2021scalar}
Xavier Antoine, Jie Shen, and Qinglin Tang.
\newblock {Scalar Auxiliary Variable/Lagrange multiplier based pseudospectral
  schemes for the dynamics of nonlinear Schr{\"o}dinger/Gross-Pitaevskii
  equations}.
\newblock {\em Journal of Computational Physics}, 437:110328, 2021.

\bibitem{baskaran2013convergence}
Arvind Baskaran, John~S Lowengrub, Cheng Wang, and Steven~M Wise.
\newblock {Convergence analysis of a second order convex splitting scheme for
  the modified phase field crystal equation}.
\newblock {\em SIAM Journal on Numerical Analysis}, 51(5):2851--2873, 2013.

\bibitem{MR4147832}
Anass Bouchriti, Morgan Pierre, and Nour~Eddine Alaa.
\newblock Remarks on the asymptotic behavior of scalar auxiliary variable
  ({SAV}) schemes for gradient-like flows.
\newblock {\em The Journal of Applied Analysis and Computation},
  10(5):2198--2219, 2020.

\bibitem{cheng2020new}
Qing Cheng, Chun Liu, and Jie Shen.
\newblock {A new Lagrange multiplier approach for gradient flows}.
\newblock {\em Computer Methods in Applied Mechanics and Engineering},
  367:113070, 2020.

\bibitem{cheng2018multiple}
Qing Cheng and Jie Shen.
\newblock {Multiple scalar auxiliary variable (MSAV) approach and its
  application to the phase-field vesicle membrane model}.
\newblock {\em SIAM Journal on Scientific Computing}, 40(6):A3982--A4006, 2018.

\bibitem{cheng2020global}
Qing Cheng and Jie Shen.
\newblock {Global constraints preserving scalar auxiliary variable schemes for
  gradient flows}.
\newblock {\em SIAM Journal on Scientific Computing}, 42(4):A2489--A2513, 2020.

\bibitem{DuF20}
Qiang Du and Xiaobing Feng.
\newblock The phase field method for geometric moving interfaces and their
  numerical approximations.
\newblock {\em Handbook of Numerical Analysis}, 21:425--508, 2020.

\bibitem{du2019maximum}
Qiang Du, Lili Ju, Xiao Li, and Zhonghua Qiao.
\newblock {Maximum principle preserving exponential time differencing schemes
  for the nonlocal Allen--Cahn equation}.
\newblock {\em SIAM Journal on numerical analysis}, 57(2):875--898, 2019.

\bibitem{du2021maximum}
Qiang Du, Lili Ju, Xiao Li, and Zhonghua Qiao.
\newblock {Maximum Bound Principles for a Class of Semilinear Parabolic
  Equations and Exponential Time-Differencing Schemes}.
\newblock {\em SIAM Review}, 63(2):317--359, 2021.

\bibitem{elliott1993global}
Charles~M Elliott and AM~Stuart.
\newblock {The global dynamics of discrete semilinear parabolic equations}.
\newblock {\em SIAM journal on numerical analysis}, 30(6):1622--1663, 1993.

\bibitem{eyre1998unconditionally}
David~J Eyre.
\newblock {Unconditionally gradient stable time marching the Cahn-Hilliard
  equation}.
\newblock {\em MRS Online Proceedings Library (OPL)}, 529, 1998.

\bibitem{feng2021high}
Xiaobing Feng, Buyang Li, and Shu Ma.
\newblock {High-order Mass-and Energy-conserving SAV-Gauss Collocation Finite
  Element Methods for the Nonlinear Schr\"{o}dinger Equation}.
\newblock {\em SIAM Journal on Numerical Analysis}, 59(3):1566--1591, 2021.

\bibitem{MR4244094}
Yuezheng Gong, Qi~Hong, and Qi~Wang.
\newblock Supplementary variable method for thermodynamically consistent
  partial differential equations.
\newblock {\em Computer Methods in Applied Mechanics and Engineering},
  381:Paper No. 113746, 19, 2021.

\bibitem{MR4053864}
Yuezheng Gong, Jia Zhao, and Qi~Wang.
\newblock Arbitrarily high-order unconditionally energy stable schemes for
  thermodynamically consistent gradient flow models.
\newblock {\em SIAM Journal on Scientific Computing}, 42(1):B135--B156, 2020.

\bibitem{MR4344588}
Dianming Hou and Chuanju Xu.
\newblock A second order energy dissipative scheme for time fractional
  {L{$^2$}} gradient flows using {SAV} approach.
\newblock {\em Journal of Scientific Computing}, 90(1):Paper No. 25, 22, 2022.

\bibitem{huang2021implicit}
Fukeng Huang and Jie Shen.
\newblock {Implicit-explicit BDF $ k $ SAV schemes for general dissipative
  systems and their error analysis}.
\newblock {\em arXiv preprint arXiv:2103.06344}, 2021.

\bibitem{huang2020highly}
Fukeng Huang, Jie Shen, and Zhiguo Yang.
\newblock {A highly efficient and accurate new scalar auxiliary variable
  approach for gradient flows}.
\newblock {\em SIAM Journal on Scientific Computing}, 42(4):A2514--A2536, 2020.

\bibitem{jiang2022improving}
Maosheng Jiang, Zengyan Zhang, and Jia Zhao.
\newblock {Improving the accuracy and consistency of the scalar auxiliary
  variable (SAV) method with relaxation}.
\newblock {\em Journal of Computational Physics}, page 110954, 2022.

\bibitem{li2019efficient}
Qi~Li, Liquan Mei, Xiaofeng Yang, and Yibao Li.
\newblock Efficient numerical schemes with unconditional energy stabilities for
  the modified phase field crystal equation.
\newblock {\em Advances in Computational Mathematics}, 45(3):1551--1580, 2019.

\bibitem{li2020sav}
Xiaoli Li and Jie Shen.
\newblock {On a SAV-MAC scheme for the Cahn--Hilliard--Navier--Stokes
  phase-field model and its error analysis for the corresponding
  Cahn--Hilliard--Stokes case}.
\newblock {\em Mathematical Models and Methods in Applied Sciences},
  30(12):2263--2297, 2020.

\bibitem{li2020stability}
Xiaoli Li and Jie Shen.
\newblock {Stability and error estimates of the SAV Fourier-spectral method for
  the phase field crystal equation}.
\newblock {\em Advances in Computational Mathematics}, 46:1--20, 2020.

\bibitem{MR4350535}
Xiaoli Li, Jie Shen, and Zhengguang Liu.
\newblock New {SAV}-pressure correction methods for the {N}avier-{S}tokes
  equations: stability and error analysis.
\newblock {\em Mathematics of Computation}, 91(333):141--167, 2021.

\bibitem{lin2019numerical}
Lianlei Lin, Zhiguo Yang, and Suchuan Dong.
\newblock {Numerical approximation of incompressible Navier-Stokes equations
  based on an auxiliary energy variable}.
\newblock {\em Journal of Computational Physics}, 388:1--22, 2019.

\bibitem{nevanlinna1981multiplier}
Olavi Nevanlinna and F~Odeh.
\newblock {Multiplier techniques for linear multistep methods}.
\newblock {\em Numerical Functional Analysis and Optimization}, 3(4):377--423,
  1981.

\bibitem{MR4158702}
Yanxia Qian, Zhiguo Yang, Fei Wang, and Suchuan Dong.
\newblock g{PAV}-based unconditionally energy-stable schemes for the
  {C}ahn-{H}illiard equation: stability and error analysis.
\newblock {\em Computer Methods in Applied Mechanics and Engineering},
  372:113444, 33, 2020.

\bibitem{shen2012second}
Jie Shen, Cheng Wang, Xiaoming Wang, and Steven~M Wise.
\newblock {Second-order convex splitting schemes for gradient flows with
  Ehrlich--Schwoebel type energy: application to thin film epitaxy}.
\newblock {\em SIAM Journal on Numerical Analysis}, 50(1):105--125, 2012.

\bibitem{shen2018convergence}
Jie Shen and Jie Xu.
\newblock Convergence and error analysis for the scalar auxiliary variable
  (sav) schemes to gradient flows.
\newblock {\em SIAM Journal on Numerical Analysis}, 56(5):2895--2912, 2018.

\bibitem{shen2018scalar}
Jie Shen, Jie Xu, and Jiang Yang.
\newblock {The scalar auxiliary variable (SAV) approach for gradient flows}.
\newblock {\em Journal of Computational Physics}, 353:407--416, 2018.

\bibitem{shen2019new}
Jie Shen, Jie Xu, and Jiang Yang.
\newblock A new class of efficient and robust energy stable schemes for
  gradient flows.
\newblock {\em SIAM Review}, 61(3):474--506, 2019.

\bibitem{shen2010numerical}
Jie Shen and Xiaofeng Yang.
\newblock {Numerical approximations of allen-cahn and cahn-hilliard equations}.
\newblock {\em Discrete \& Continuous Dynamical Systems}, 28(4):1669, 2010.

\bibitem{SheY20}
Jie Shen and Xiaofeng Yang.
\newblock The {IEQ} and {SAV} approaches and their extensions for a class of
  highly nonlinear gradient flow systems.
\newblock In {\em 75 years of mathematics of computation}, volume 754 of {\em
  Contemp. Math.}, pages 217--245. Amer. Math. Soc., [Providence], RI, [2020]
  \copyright 2020.

\bibitem{temam2012infinite}
Roger Temam.
\newblock {\em {Infinite-dimensional dynamical systems in mechanics and
  physics}}, volume~68.
\newblock Springer Science \& Business Media, 2012.

\bibitem{wang2016efficient}
Xiaoqiang Wang, Lili Ju, and Qiang Du.
\newblock {Efficient and stable exponential time differencing Runge--Kutta
  methods for phase field elastic bending energy models}.
\newblock {\em Journal of Computational Physics}, 316:21--38, 2016.

\bibitem{yang2016linear}
Xiaofeng Yang.
\newblock {Linear, first and second-order, unconditionally energy stable
  numerical schemes for the phase field model of homopolymer blends}.
\newblock {\em Journal of Computational Physics}, 327:294--316, 2016.

\bibitem{yang2017linearly}
Xiaofeng Yang and Daozhi Han.
\newblock {Linearly first-and second-order, unconditionally energy stable
  schemes for the phase field crystal model}.
\newblock {\em Journal of Computational Physics}, 330:1116--1134, 2017.

\bibitem{yang2017efficient}
Xiaofeng Yang and Lili Ju.
\newblock {Efficient linear schemes with unconditional energy stability for the
  phase field elastic bending energy model}.
\newblock {\em Computer Methods in Applied Mechanics and Engineering},
  315:691--712, 2017.

\bibitem{MR4131868}
Jun Zhang and Xiaofeng Yang.
\newblock Efficient and accurate numerical scheme for a magnetic-coupled
  phase-field-crystal model for ferromagnetic solid materials.
\newblock {\em Computer Methods in Applied Mechanics and Engineering},
  371:113310, 20, 2020.

\bibitem{MR4064802}
Jun Zhang and Xiaofeng Yang.
\newblock A fully decoupled, linear and unconditionally energy stable numerical
  scheme for a melt-convective phase-field dendritic solidification model.
\newblock {\em Computer Methods in Applied Mechanics and Engineering},
  363:112779, 23, 2020.

\bibitem{zhu1999coarsening}
Jingzhi Zhu, Long-Qing Chen, Jie Shen, and Veena Tikare.
\newblock {Coarsening kinetics from a variable-mobility Cahn-Hilliard equation:
  Application of a semi-implicit Fourier spectral method}.
\newblock {\em Physical Review E}, 60(4):3564, 1999.

\bibitem{MR3979103}
Qingqu Zhuang and Jie Shen.
\newblock Efficient {SAV} approach for imaginary time gradient flows with
  applications to one- and multi-component {B}ose-{E}instein condensates.
\newblock {\em Journal of Computational Physics}, 396:72--88, 2019.

\end{thebibliography}

\end{document}